\newcommand{\calC}{\mathcal{C}}
\newcommand{\calH}{\mathcal{H}}
\newcommand{\calM}{\mathcal{M}}
\newcommand{\calP}{\mathcal{P}}
\newcommand{\calQ}{\mathcal{Q}}
\newcommand{\calR}{\mathcal{R}}
\newcommand{\calX}{\mathcal{X}}
\newcommand{\calZ}{\mathcal{Z}}
\theoremstyle{plain}
\newtheorem{theorem}{Theorem}[section]
\crefname{theorem}{Theorem}{Theorems}
\newtheorem{proposition}[theorem]{Proposition}
\crefname{proposition}{Proposition}{Propositions}
\newtheorem{corollary}[theorem]{Corollary}
\crefname{corollary}{Corollary}{Corollaries}
\newtheorem{lemma}[theorem]{Lemma}
\crefname{lemma}{Lemma}{Lemmas}
\crefname{conjecture}{Conjecture}{Conjectures}
\crefname{problem}{Problem}{Problem}
\newtheorem{claim}[theorem]{Claim}
\crefname{claim}{Claim}{Claims}
\crefname{observation}{Observation}{Observations}
\crefname{setup}{Setup}{Setups}
\crefname{myth}{Myth}{Myths}
\crefname{fact}{Fact}{Facts}
\crefname{algorithm}{Algorithm}{Algorithms}
\crefname{remark}{Remark}{Remarks}
\crefname{example}{Example}{Examples}
\theoremstyle{definition}
\newtheorem{definition}[theorem]{Definition}
\crefname{definition}{Definition}{Definitions}
\crefname{construction}{Construction}{Constructions}
\newtheorem{question}[theorem]{Question}
\crefname{question}{Question}{Questions}
\numberwithin{equation}{section}
\setlist[enumerate,1]{label={\upshape (\roman*)}}
\DeclareMathOperator{\probability}{Pr}
\DeclareMathOperator{\expectation}{\mathbf{E}}
\DeclareMathOperator{\wsp}{wsp}
\DeclareMathOperator{\ssp}{ssp}
\newcommand{\eps}{\varepsilon}
\newenvironment{proofclaim}[1][Proof of the claim]{\begin{proof}[#1]}{\end{proof}}
\author[C. G. Fernandes]{Cristina G. Fernandes}
\author[G. O. Mota]{Guilherme Oliveira Mota}
\address[C. G. Fernandes and G. O. Mota]{Instituto de Matemática e
  Estatística, Universidade de São Paulo, Rua do Matão 1010,
  05508-090 São Paulo, Brazil}
\email{\{\,cris\,|\,mota\,\}@ime.usp.br}
\author[N.~Sanhueza-Matamala]{Nicol\'as Sanhueza-Matamala}
\address[N.~Sanhueza-Matamala]{Departamento de Ingeniería Matemática,
  Facultad de Ciencias Físicas y Matemáticas, Universidad de
  Concepción, Chile.}  \email{nsanhuezam@udec.cl}
\thanks{\thanks{This research was partly supported by a joint project
    FAPESP and ANID (2019/13364-7) and by CAPES (Finance Code 001). 
    C.~G.~Fernandes was supported by CNPq (310979/2020-0). 
    G.~O.~Mota was supported by CNPq (306620/2020-0, 406248/2021-4) and FAPESP
    (2018/04876-1). N. Sanhueza-Matamala was supported by ANID FONDECYT Iniciación Nº11220269 grant.  CAPES
    is the Coordena\c c\~ao de Aperfei\c coamento de Pessoal de
    N\'ivel Superior.  CNPq is the National Council for Scientific and
    Technological Development of Brazil.  FAPESP is the S\~ao Paulo
    Research Foundation. ANID is the Chilean National Agency for Research and Development.}}
\title{Separating path systems in complete graphs}
\begin{document}
\onehalfspace

\begin{abstract}
  We prove that in any $n$-vertex complete graph there is a collection
  $\mathcal{P}$ of $(1 + o(1))n$ paths that \emph{strongly separates}
  any pair of distinct edges $e, f$, meaning that there is a path in
  $\mathcal{P}$ which contains $e$ but not $f$.
  Furthermore, for certain classes of $n$-vertex $\alpha n$-regular graphs 
  we find a collection of $(\sqrt{3 \alpha + 1} - 1 + o(1))n$ paths
  that strongly separates any pair of edges.
  Both results are best-possible up to the $o(1)$~term.
\end{abstract}

\maketitle

\section{Introduction}

\subsection{Separating path systems}

Let $\mathcal{P}$ be a family of paths in a graph $G$.  We say that
two edges $e, f$ are \emph{weakly separated by $\mathcal{P}$} if there
is a path in $\mathcal{P}$ which contains one of these edges but not
both.  We also say that they are \emph{strongly separated by
	$\mathcal{P}$} if there are two paths $P_e, P_f \in \mathcal{P}$
such that $P_e$ contains $e$ but not $f$, and $P_f$ contains $f$ but
not $e$.

We are interested in the problem of finding ``small'' families of
paths (``path systems'') that separate any pair of edges in a given graph.
A path system in a graph~$G$ is \emph{weak-separating}
(resp.~\emph{strong-separating}) if all pairs of edges in $G$ are
weakly (resp.~strongly) separated by it. Let $\wsp(G)$ and $\ssp(G)$,
respectively, denote the size of smallest such families of paths in a
graph $G$.  Since every strong-separating path system is also
weak-separating, the inequality $\wsp(G) \leq \ssp(G)$ holds for any
graph~$G$, but equality is not true in general.

The study of general separating set systems was initiated by
Rényi~\cite{Renyi1961} in the 1960s.  The variation which considers the separation
of edges using subgraphs has been considered many times in the computer
science community, motivated by the application of efficiently
detecting faulty links in
networks~\cite{HPWYC2007,HonkalaKarpovskyLitsyn2003,TapolcaiRonyaiHo2013,ZakrevskiKarpovsky1998}.
The question got renewed interest in the combinatorics community after
it was raised by Katona in a conference in 2013, and was considered
simultaneously by Falgas-Ravry, Kittipassorn, Korándi, Letzter, and
Narayanan~\cite{FKKLN2014} in its weak version, and by Balogh, Csaba,
Martin, and Pluhár~\cite{BCMP2016} in its strong version.  Both teams
conjectured that $n$-vertex graphs $G$ admit (weak and strong)
separating path systems of size linear in $n$, that is, $\wsp(G),
\ssp(G) = O(n)$, and both also observed that an $O(n \log n)$ bound
holds. 
Letzter~\cite{Letzter2022} made substantial progress in this question by showing that all $n$-vertex graphs $G$ satisfy $\ssp(G) = O(n \log^\ast n)$.
The conjecture was settled by Bonamy,
Botler, Dross, Naia, and Skokan~\cite{BBDNS2023}, who proved that
$\ssp(G) \leq 19n$ holds for any $n$-vertex graph $G$.

\subsection{Separating cliques}

An interesting open question is to replace the value of `$19$' in
$\ssp(G)\leq 19n$ by the smallest possible number. Perhaps, it could
be possible even to replace this value by $1 + o(1)$.  Studying
separating path systems in complete graphs is particularly relevant, since
$K_n$ gives the best-known lower bounds for $\wsp(G)$ and
$\ssp(G)$ over all $n$-vertex graphs $G$ (see Section~\ref{section:conclusion} for further discussion). Because of this fact, the
behaviour of $\ssp(K_n)$ and $\wsp(K_n)$ has been enquired repeatedly by
many authors (e.g. \cite[Section 7]{FKKLN2014}).

For the weak separation, we know that $\wsp(K_n) \geq n-1$ (see the
remark before Conjecture 1.2 in~\cite{FKKLN2014}).  For strong
separation, mimicking that proof shows that the slightly better bound
$\ssp(K_n) \geq n$ holds (this is done
in~\Cref{proposition:lowerboundclique}).  Our first main result shows
that this lower bound is asymptotically correct.
\begin{theorem} \label{theorem:completegraph}
	The following holds.
	\[\ssp(K_n) = (1 + o(1))n.\]
\end{theorem}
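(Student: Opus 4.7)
The plan is to establish the upper bound $\ssp(K_n) \leq (1 + o(1))n$; the matching lower bound is provided by \Cref{proposition:lowerboundclique}. The key reformulation I will exploit is the \emph{signature framework}: given a path family $\calP = \{P_1, \ldots, P_m\}$ in $K_n$, assign each edge $e$ the signature $S_e = \{i : e \in P_i\} \subseteq [m]$; then $\calP$ is strong-separating if and only if $\{S_e\}_{e \in E(K_n)}$ forms an antichain in the containment order. I aim to produce $m = (1+o(1))n$ paths in which nearly all signatures are distinct $2$-subsets of $[m]$, a condition that automatically yields the antichain property. Since $\binom{n}{2}$ equals both the number of edges of $K_n$ and the number of available $2$-subsets when $m=n$, this is a tight design constraint.

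The construction will be built in two layers. First, assume $n$ is even, which absorbs at most one extra path in the $o(1)$ term. Take a fixed decomposition of $K_n$ into $n/2$ edge-disjoint Hamilton paths $Q_1, \ldots, Q_{n/2}$, and overlay a second decomposition $Q'_1, \ldots, Q'_{n/2}$ obtained by applying a permutation $\pi$ of $V(K_n)$ to the first. The result is $n$ Hamilton paths in total, with every edge contained in exactly two of them, so every signature has size exactly $2$. The choice of $\pi$ governs how the $2$-signatures distribute across $\binom{[n]}{2}$. I will show via a second-moment argument on a uniformly random $\pi$ that the expected number of \emph{defect pairs}, that is, pairs of distinct edges $(e,f)$ with $S_e = S_f$, is $o(n)$, and then concentrate to fix a specific $\pi$ realising this bound. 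Antichain violations of any other form are impossible because every signature has size exactly $2$.

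Given such a $\pi$, at most $o(n)$ edges participate in some defect pair. I will repair these by appending, for each defective edge $e$, the single-edge path consisting of $e$ alone, enlarging $S_e$ by one fresh index while leaving all other signatures unchanged. This breaks every collision in which $e$ is involved without creating new comparabilities: the enlarged signature becomes a $3$-set containing an index present in no other signature, so it is incomparable with any pristine $2$-signature. The total number of added paths is $o(n)$, so $|\calP| = n + o(n) = (1+o(1))n$ as required.

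The principal obstacle is the analysis of the overlay in the second step: verifying that superimposing two Hamilton-path decompositions, with one randomly relabelled, yields a near-bijection between edges of $K_n$ and $2$-element subsets of $[n]$. The number of signature collisions depends on the intersection pattern between paths $Q_i$ and $\pi(Q_j)$, a quantity that is not independent across pairs $(i,j)$, so a naive first-moment bound is not sharp enough. Obtaining $o(n)$ defects will likely require either a carefully chosen, algebraically symmetric base decomposition (for instance the Walecki decomposition with vertex set $\mathbb{Z}_{n-1} \cup \{\infty\}$, permitting explicit counts of $|Q_i \cap \pi(Q_j)|$) or a delicate second-moment estimate that tolerates the weak dependencies. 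Once this near-orthogonal overlay is in hand, the signature framework and the single-edge repair mechanism make the rest of the proof essentially bookkeeping.
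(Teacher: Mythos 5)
Your overlay construction cannot work, and the failure is a counting obstruction rather than a technical difficulty with the second-moment argument. In your family, every edge of $K_n$ lies in exactly one path $Q_i$ of the first decomposition and exactly one path $Q'_j = \pi(Q_j)$ of the second, so every signature is a ``crossing'' pair: one index from each decomposition. Hence only $(n/2)^2 = n^2/4$ signatures are available, while $K_n$ has $\binom{n}{2} \sim n^2/2$ edges. The two paths covering an edge can never both come from the same decomposition (paths within one decomposition are edge-disjoint), so the $\binom{n/2}{2}$ within-decomposition pairs are wasted. Consequently, for \emph{every} choice of $\pi$, the number of edges whose signature is shared with another edge is at least $\binom{n}{2} - n^2/4 = \Omega(n^2)$; indeed the average intersection $|E(Q_i)\cap E(\pi(Q_j))|$ is about $2$, not $1$. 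Your hoped-for bound of $o(n)$ defect pairs is therefore impossible, and the singleton-path repair step would add $\Omega(n^2)$ paths, destroying the bound. (The repair logic itself is fine only because you repair \emph{all} edges in a collision class; repairing one edge of a colliding pair would leave a containment $S_f \subseteq S_e$.)

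The structure you would actually need is one in which the two paths covering an edge may be \emph{any} pair among the $n$ paths and distinct edges get distinct pairs --- this is precisely an orthogonal double cover of $K_n$ by Hamiltonian paths, which is known to exist only for special values of $n$ and is a hard open problem in general. The paper circumvents this by relaxing to $t = (1+\eps)n$ labels and constructing an (almost) injective assignment $E(K_n) \to \binom{[t]}{2}$ (more precisely, into an antichain of $2$- and $3$-sets in the general regular case) via a conflict-free almost-perfect matching in an auxiliary $8$-uniform hypergraph (\Cref{theorem:hypergraphmatching}), with conflicts used to forbid short monochromatic cycles so that each label class is a union of long cycles and few paths; these are then stitched into single paths using robust connectivity (\Cref{lemma:connectingpaths}), and the sparse leftover graph is separated by $O(\eps n)$ extra paths via a Lov\'asz Local Lemma argument (\Cref{lemma:separatingmatchings,lemma:lastfewpaths}). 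Your signature/antichain framing matches the paper's viewpoint, but the overlay of two Hamilton-path decompositions cannot realize it.
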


Let us summarise the history of upper bounds for this problem now.
First we knew that $\wsp(K_n) = O(n)$ \cite[Theorem
1.3]{FKKLN2014}, and then $\ssp(K_n) \leq 2n+4$ \cite[Theorem
3]{BCMP2016}.  Wickes~\cite{Wickes2022} studied $\wsp(K_n)$ in more
detail, and showed that $\wsp(K_n) \leq n$ whenever $n$ or $n-1$ is a
prime number, and that $\wsp(K_n) \leq (21/16 + o(1))n$ in general.

The problem of estimating $\ssp(K_n)$ is connected with the older
problem of finding \emph{orthogonal double covers} (ODC), which are
collections $\mathcal{C}$ of subgraphs of~$K_n$ in which every edge
appears in exactly two elements of $\mathcal{C}$, and the intersection
of any two elements of $\mathcal{C}$ contains exactly one edge.
If each graph in $\mathcal{C}$ is isomorphic to some graph $H$, we say that $\mathcal{C}$ is an \emph{ODC by $H$}.
If $H$ is an $n$-vertex path and $\mathcal{C}$ is an ODC by $H$, then each element of~$\mathcal{C}$ is a Hamiltonian path, and it is easy
to check that $\mathcal{C}$ must contain exactly $n$ paths and forms a strong-separating path system. 
Thus, we know that $\ssp(K_n) = n$ whenever an ODC by Hamiltonian paths exists.  
This is known to be false for $n = 4$, but is known to be true for infinitely 
many values of $n$.  In particular, it holds if $n$ can be written as a
product of the numbers $5$, $9$, $13$,~$17$, and $29$~\cite{HortonNonay1991}.
See 
the survey~\cite{GGHLL2002} for more results and details.  
Gronau,
Müllin, and Rosa~\cite{GronauMullinRosa1997} conjectured that an ODC by $H$
in $K_n$ can be found whenever $H$ is any $n$-vertex tree which is not a path with three
edges. If true, this would imply that $\ssp(K_n) = n$ holds for every
$n \neq 4$.  An approximate version of this conjecture (obtained as a
corollary of general results about `rainbow trees') was obtained by
Montgomery, Pokrovskiy and Sudakov~\cite[Theorem~1.7]{MontgomeryPokrovskiySudakov2020} 
whenever $n$ is a large power of two.

\subsection{Separating regular graphs}
Our main result for cliques (\Cref{theorem:completegraph}) follows
from a more general result which works for ``robustly connected''
graphs which are \emph{almost regular}, meaning that each vertex has
approximately the same number of neighbours.  For simplicity, we give
the statement for regular graphs here.  Let $\alpha \in [0,1]$ and
consider an $\alpha n$-regular graph $G$ on $n$ vertices.  A counting
argument (\Cref{proposition:lowerboundgeneral}) shows that $\ssp(G)
\geq (\sqrt{3 \alpha + 1} - 1 - o(1))n$ must hold.  Our second main
result shows that this bound essentially holds with equality if we
also assume some vertex-connectivity condition. We say an $n$-vertex
graph $G$ is \emph{$(\delta, L)$-robustly-connected} if, for every $x,
y \in V(G)$, there exists $1 \leq \ell \leq L$ such that there are at
least $\delta n^\ell$ $(x,y)$-paths with exactly $\ell$ inner vertices
each.

\begin{theorem} \label{theorem:regular}
  Let $\alpha, \delta \in (0,1)$ and $L \geq 1$.
  Suppose that $G$ is an $n$-vertex graph which is $\alpha n$-regular
  and $(\delta, L)$-robustly-connected. Then
  \[\ssp(G) = (\sqrt{3\alpha + 1} - 1 + o(1))n.\]
\end{theorem}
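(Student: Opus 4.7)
The lower bound $\ssp(G) \geq (\sqrt{3\alpha+1}-1 - o(1))n$ is provided by \Cref{proposition:lowerboundgeneral}, so my task is to construct a strong-separating path system of size $(\gamma + o(1))n$, where $\gamma := \sqrt{3\alpha + 1} - 1$. I would set the target edge-multiplicity $\mu := 2\gamma/\alpha$, noting that $\mu \geq 2$ with equality only at $\alpha = 1$ (the clique case, corresponding to the known regime of orthogonal double covers by Hamilton paths, which informs the construction). My plan is to build a family $\mathcal{P}$ of $\lceil \gamma n \rceil$ near-Hamilton paths that collectively form an approximate $\mu$-fold cover of $E(G)$, then verify the local separation condition at each vertex and patch any remaining edges or pairs with short correction paths.

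For the construction, I would use the $\alpha n$-regularity and the $(\delta, L)$-robust connectivity of $G$ via an absorbing / random-greedy procedure in the spirit of approximate decomposition theorems of Kühn--Osthus type. Concretely: first, set aside a small absorbing structure designed to incorporate any leftover edges; then iteratively extract $\gamma n$ long paths of length $(1 - o(1))n$, using the robust connectivity to guarantee many candidate continuations at each step; finally, absorb the leftover edges. The output should satisfy that each edge of $G$ is covered $\mu$ times up to $o(n)$ slack, and that the pair-signature multigraph $H_v$ at each vertex $v$ (on the $\alpha n$ edges at $v$, with an edge for each path of $\mathcal{P}$ passing through $v$, joining the two edges at $v$ used by that path) is near-$\mu$-regular.

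To verify strong separation, I would argue locally: at each vertex $v$ of degree $\alpha n$, strong separation holds if and only if every edge $e$ at $v$ either appears in some signature as a singleton $\{e\}$ (from a path ending at $v$ via $e$) or is adjacent in $H_v$ to at least two distinct other edges. Since $H_v$ is designed to be near-$\mu$-regular with $\mu \geq 2$, the second condition fails for at most $o(n)$ exceptional edges per vertex. I would then add $o(n)$ short correction paths, built using the $(\delta, L)$-robust connectivity (which supplies many short paths between any two vertices), to handle the exceptional pairs and any edges missed by $\mathcal{P}$, preserving the total count at $(\gamma + o(1))n$.

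The hard part will be the construction. Producing an approximate multi-Hamilton-path cover with the exact path count, the correct multiplicity, \emph{and} sufficient structural regularity at every vertex to enforce separation is delicate; a purely random construction produces too many ``low-degree'' edges in the $H_v$'s, so the argument must combine randomness with enough structural control (via absorbing or iterative balancing). The tight constant $\gamma = \sqrt{3\alpha+1}-1$, equivalent to $\gamma^2 + 2\gamma = 3\alpha$, arises from the balance of $\gamma n$ paths of length $\approx n$ yielding total edge-uses $\gamma n^2$ against $|E(G)| = \alpha n^2/2$ (so $\mu = 2\gamma/\alpha$), coupled with the per-vertex separation condition forcing the quadratic relation; achieving this balance in the construction is the main technical challenge I anticipate.
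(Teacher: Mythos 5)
Your lower bound is fine, but the upper-bound plan has a genuine gap at its core: the separation criterion you propose to verify is not the right condition. Strong separation is the \emph{global} antichain condition that for all pairs of distinct edges $e,f$ (including vertex-disjoint ones) neither $\mathcal{P}(e)\subseteq\mathcal{P}(f)$ nor $\mathcal{P}(f)\subseteq\mathcal{P}(e)$. Your local test at a vertex $v$ (each edge at $v$ occurs as a singleton signature or has two distinct neighbours in $H_v$) is neither equivalent to this nor sufficient: if $\mathcal{P}(e)=\{P_1,P_2\}$ and $\mathcal{P}(f)=\{P_1,P_2,P_3\}$, every local condition you state can hold at every vertex, yet $e$ is not separated from $f$. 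Relatedly, your uniform multiplicity $\mu=2\gamma/\alpha$ is not an integer for $\alpha<1$; the extremal structure (visible already in the proof of \Cref{proposition:lowerboundgeneral}) forces a \emph{mix}: roughly $\binom{\gamma n}{2}$ edges covered by exactly two paths, with each such edge occupying a \emph{distinct} pair of paths, and all remaining edges covered three times by triples not nested with any of those pairs. This injectivity/antichain design requirement is exactly what makes the constant $\sqrt{3\alpha+1}-1$ achievable, and it is not implied by near-$\mu$-regularity of the $H_v$'s or by any absorbing/approximate-decomposition argument in the Kühn--Osthus style; nor can it be repaired afterwards by $o(n)$ correction paths, since nothing in your construction bounds the number of containment-violating pairs.

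The paper's proof is organised precisely around enforcing that antichain structure from the start, rather than verifying separation after the fact. It builds an auxiliary antichain $\{2,3\}$-design on a set of $\approx\gamma n$ labels (\Cref{lemma:basehypergraph}), and realises the assignment of label sets to the edges of $G$ via a conflict-free almost-perfect matching in an $8$-uniform hypergraph (\Cref{theorem:hypergraphmatching}, applied in \Cref{lemma:firstpacking}); the matching property guarantees that each label class is a $2$-matching in $G$ and that the label sets of distinct edges are never nested, so separation is automatic by design, while the conflict system excludes short cycles. Only then are the $2$-matchings turned into genuine paths using the robust connectivity (\Cref{lem:acyclic}, \Cref{lemma:connectingpaths}), and the small leftover graph of unseparated edges is handled by a Lov\'asz Local Lemma argument producing $O(\eps n)$ extra paths (\Cref{lemma:lastfewpaths}). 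Your proposal would need an analogous mechanism that assigns to each edge a $2$- or $3$-set of path labels forming an antichain; without it, the construction step and the verification step both fail.
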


We note that at least some kind of connectivity is required for a
bound like the one in Theorem~\ref{theorem:regular}. Indeed, the graph
$G$ formed by two vertex-disjoint cliques with $n/2$ vertices is $(n/2
- 1)$-regular, but has clearly $\ssp(G) = 2\cdot\ssp(K_{n/2}) \geq n$,
whereas \Cref{theorem:regular} would give an incorrect upper bound
around $(0.582 + o(1))n$.

Observe that the function $f(\alpha) = \sqrt{3 \alpha + 1} - 1$ satisfies
$\alpha < f(\alpha) < \sqrt{\alpha} < 1$ for {$\alpha \in (0,1)$}, so in
particular this shows that all $n$-vertex graphs $G$ covered by
\Cref{theorem:regular} satisfy $\ssp(G) \leq (1 + o(1))n$. From
\Cref{theorem:regular} we can obtain as corollaries results for many
interesting classes of graphs as balanced complete bipartite graphs, regular
graphs with large minimum degree, regular robust expanders, etc. 
(see Section~\ref{sec:corollaries} for details).

\subsection{Outline of the proof}

We summarise the idea behind our proof by focusing on the case of
estimating $\ssp(K_n)$.  The calculations which give the lower bound
$\ssp(K_n) \geq n$ (\Cref{proposition:lowerboundclique}) reveal that,
if $\ssp(K_n) = n$ holds, then in an optimal strong-separating path
system each path must be Hamiltonian, each edge must be covered
precisely by two paths, and every two different paths intersect
precisely in one edge.  Guided by this, our approach can be thought
conceptually of taking $t = (1 + \eps)n$ different `labels' and
finding an injective assignment $\phi: E(K_n) \rightarrow
\binom{t}{2}$ where every edge gets two labels.  Then, by defining,
for each $1 \leq i \leq t$, the subgraph $Q_i \subseteq K_n$
consisting of the edges which received label $i$, we get that the
family $\{Q_i\}_{1 \leq i \leq t}$ will strongly separate the edges of
$K_n$.

To make sure that the graphs $Q_i$ resemble paths, we will obtain the
assignment $\phi$ in a more careful way.  We will construct $\phi$
with the help of an almost perfect matching in an auxiliary hypergraph
$\mathcal{H}$.  In this case, the hypergraph can be described as
follows.  Orient the edges of $K_n$ to obtain a digraph $D$, then
obtain an auxiliary graph $B$ by taking two copies $V_1, V_2$ of
$V(K_n)$ and adding an edge between $u_1 \in V_1$ and $v_2 \in V_2$ if
the arc $(u,v)$ appears in $D$.
Next, consider a clique $K$ on a set of vertices $\{1, 2, \dotsc,
n\}$, vertex-disjoint from $V_1 \cup V_2$.  Form a graph $Z$ by adding
every edge between a vertex of $V_1 \cup V_2$ and a vertex in $K$.
Then, if $u_1 \in V_1, v_2 \in V_2, i, j \in V(K)$ and these vertices
form a $K_4$ in $Z$ (say those copies of $K_4$ are `valid'), we can
interpret that as assigning the edge $uv \in E(K_n)$ to the graphs
$Q_i$ and $Q_j$.  Similarly, if we have edge-disjoint valid copies of
$K_4$ in $Z$, this can be interpreted as assigning different edges of
$E(K_n)$ to different pairs $Q_i, Q_j$; without repeating pairs, and
assigning at most two edges adjacent to the same vertex in $K_n$ to
the same $Q_i$.  Thus if we can find edge-disjoint valid copies of
$K_4$ which use all edges between $V_1$ and $V_2$, we would have
obtained an allocation of all the edges of $E(K_n)$ to pairs of $Q_i,
Q_j$, where each $Q_i$ has maximum degree at most $2$.  To find such
edge-disjoint copies of $K_4$ we look at the auxiliary $6$-graph
$\mathcal{H}$ with vertex set $E(Z)$ and each valid copy of $K_4$
corresponding to an edge in $\mathcal{H}$.  By construction, an almost
perfect matching in $\mathcal{H}$ will yield graphs $\{Q_i\}_{1 \leq i
  \leq t}$ which separate `almost all' pairs of edges and have the
crucial property that $\Delta(Q_i) \leq 2$ for each~$i$.  This will
ensure that the graphs $Q_i$ are collections of paths and cycles.  To
find such an almost perfect matching in $\mathcal{H}$ we will use a
recent powerful result on hypergraph matchings by Glock, Joos, Kim,
Kühn and Lichev~\cite{GJKKL2022}, which will allow us to gain even
more control of the shape of the graphs $Q_i$ by avoiding certain
undesirable short cycles.

After this is done, we will have covered and separated most, but not
all, of the edges of $K_n$ with the graphs $\{Q_i\}_{1 \leq i \leq t}$, which are collections of
paths and cycles.  In a next step, we will transform each $Q_i$ by merging (most of) the edges of $Q_i$ into a single path
$Q'_i$.  This is done carefully to ensure the path system $\mathcal{Q}
= \{Q'_i\}_{1 \leq i \leq t}$ still strongly separates most of the
edges of the graph.

In the final step, the subgraph $H \subseteq G$ of edges which remain
unseparated will be very sparse, and in particular, has very small
maximum degree (at most $\eps n$).  Using a probabilistic
argument based on the Lovász Local Lemma, we find a small (of size $O(\eps
n)$) strong-separating path system $\mathcal{P}$ which
strongly separates $H$. Then our final desired path
system will be given by $\mathcal{P} \cup \mathcal{Q}$.

In this sketch of the proof we glossed over some details.  In the
actual proof (which covers the general case for $G \neq K_n$) the
situation is slightly more technical because in an optimal solution
the edges of $G$ need to be covered by $2$ or $3$ paths (as can be
seen from \Cref{proposition:lowerboundgeneral}).  The outline of the
proof is the same, but instead we will use a more intricate
auxiliary hypergraph $\mathcal{H}$ (in fact, we use an $8$-uniform
graph) to find the initial assignment.

\subsection{Organization of the paper}

In \Cref{section:lowerbounds} we give simple counting arguments which
yield the lower bounds in \Cref{theorem:completegraph} and
\Cref{theorem:regular}.
Then we begin the proof of our main result.
In \Cref{section:preliminaries} we gather some probabilistic and hypergraph tools and prove results that will be helpful during the next sections.
In \Cref{section:almostall} we find a family of graphs which separates almost all edges of a graph $G$, via a perfect matching in an auxiliary hypergraph.
In \Cref{section:breakingbad} we transform the given families of graphs into paths, keeping some structural properties.
In \Cref{sec:separating} we find small path systems which separate the remaining leftover edges.
Then the pieces of the main proofs are put together in \Cref{sec:mainresult}.

In \Cref{sec:corollaries} we describe how to use our main result in
some important graph classes, and we finalise with concluding remarks
in \Cref{section:conclusion}.

\section{Lower bounds} \label{section:lowerbounds}

Given a path system $\mathcal{P}$ in a graph $G$ and $e \in E(G)$, let
$\mathcal{P}(e) \subseteq \mathcal{P}$ be the paths of $\mathcal{P}$
which contain $e$.  Note that $\mathcal{P}$ is weak-separating if and
only if the sets $\mathcal{P}(e)$ are different for all $e \in E(G)$;
and $\mathcal{P}$ is strong-separating if and only if no
set $\mathcal{P}(e)$ is contained in another $\mathcal{P}(f)$.

\begin{proposition} \label{proposition:lowerboundclique}
  For each $n \geq 3$, $\ssp(K_n) \geq n$.
\end{proposition}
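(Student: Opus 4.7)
The plan is to refine a standard incidence-counting argument (which already yields $\wsp(K_n) \geq n-1$) by exploiting the stronger antichain condition available for strong separation. Let $\mathcal{P}$ be any strong-separating path system in $K_n$, and for each vertex $v$ write $\mathcal{P}(v) \subseteq \mathcal{P}$ for the paths through $v$ and $d(v) := |\mathcal{P}(v)|$.

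The first step will establish the per-vertex bound $d(v) \geq n-1$. The $n-1$ sets $\{\mathcal{P}(e) : e \ni v\}$ are nonempty subsets of $\mathcal{P}(v)$ (an edge absent from every path would trivially violate strong separation) and, by the characterisation of strong separation recalled just before the statement, they form an antichain under inclusion. The key structural constraint is that each path $P \in \mathcal{P}(v)$ lies in at most two of these sets, because $\deg_P(v) \leq 2$. I will split the $n-1$ sets into $a$ singletons and $b = n-1-a$ sets of size at least two. The antichain property forces the unique element of each singleton to appear in no other set; so the $a$ singletons consume $a$ distinct paths of $\mathcal{P}(v)$, each contributing exactly one incidence. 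The remaining $d(v) - a$ paths must account for the incidences of the $b$ larger sets, giving $2b \leq 2(d(v) - a)$, hence $n-1 = a + b \leq d(v)$.

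The second step aggregates over $v$: since $\sum_v d(v) = \sum_{P \in \mathcal{P}} |V(P)| \geq n(n-1)$ and $|V(P)| \leq n$, we already obtain $|\mathcal{P}| \geq n-1$. To gain the extra unit, I will assume for contradiction that $|\mathcal{P}| = n-1$. Both inequalities then saturate, forcing every path to be Hamiltonian and $d(v) = n-1$ for every $v$. Counting edge-incidences gives $\sum_e |\mathcal{P}(e)| = \sum_P |E(P)| = (n-1)^2$, while $K_n$ has $\binom{n}{2}$ edges; since $2\binom{n}{2} = n(n-1) > (n-1)^2$, some edge $e_0$ must satisfy $|\mathcal{P}(e_0)| = 1$.

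The final step re-applies the antichain property to $e_0$: if $\mathcal{P}(e_0) = \{P_0\}$, then $\mathcal{P}(e_0) \not\subseteq \mathcal{P}(f)$ for every other edge $f$ forces $P_0 \notin \mathcal{P}(f)$, i.e.\ $E(P_0) = \{e_0\}$. This contradicts $P_0$ being Hamiltonian whenever $n \geq 3$, so $|\mathcal{P}| \geq n$. The main delicate point is the singleton-handling in the per-vertex bound, which is precisely where strong separation does more work than weak separation; once that is in place, the remaining arguments are elementary bookkeeping.
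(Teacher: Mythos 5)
Your proof is correct, but it follows a genuinely different route from the paper's. The paper runs a single global count of edge incidences: it bounds $\sum_{e} |\mathcal{P}(e)|$ from above by $|\mathcal{P}|(n-1) - |\mathcal{P}_1|(n-2)$ (where $\mathcal{P}_1$ is the set of single-edge paths) and from below by $n(n-1) - |E_1|$, using the observation that an edge covered by a unique path forces that path to consist of that edge alone, so that $|E_1| \leq |\mathcal{P}_1|$; comparing the two bounds gives $|\mathcal{P}| \geq n$ in one computation, with no equality-case analysis. You instead prove a local statement with no counterpart in the paper, namely the per-vertex antichain bound $d(v) \geq n-1$ (whose singleton-handling is correct: a singleton $\mathcal{P}(e)=\{P\}$ forces $P$ out of every other $\mathcal{P}(f)$, and distinct singleton edges at $v$ cannot share their path), aggregate over vertices to get $|\mathcal{P}| \geq n-1$, and then push to $n$ by a rigidity argument in the equality case: all paths Hamiltonian, hence $\sum_e|\mathcal{P}(e)| = (n-1)^2 < 2\binom{n}{2}$, hence some edge covered exactly once, which again forces a single-edge path and contradicts Hamiltonicity. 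Both proofs hinge on the same structural fact about singly-covered edges, but the paper's version is shorter and transfers directly to the density setting of \Cref{proposition:lowerboundgeneral}, while yours yields the extra local information that every vertex of $K_n$ must lie on at least $n-1$ paths of any strong-separating system, and makes the extremal structure (Hamiltonian paths, every edge covered at least twice) explicit, which is exactly the heuristic the paper invokes informally when motivating its construction. One small presentational point: your per-vertex step silently uses that every edge lies in at least one path; you note this parenthetically, and it is indeed immediate from strong separation, but it is worth stating as a standing fact before both the vertex count and the final averaging step, since both rely on it.
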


\begin{proof}
  Let $n \geq 3$.  Let $\mathcal{P}$ be a strong-separating path
  system in $K_n$, and define $\mathcal{P}_1 = \{ P \in \mathcal{P} :
  |E(P)| = 1 \}$.  Note that
  \[\sum_{e \in E(K_n)} |\mathcal{P}(e)| = \sum_{P \in \mathcal{P}} |E(P)| \leq |\mathcal{P}_1| + |\mathcal{P} \setminus \mathcal{P}_1|(n-1) = |\mathcal{P}| (n-1) - |\mathcal{P}_1|(n-2),\]
  where we used that each path can contain at most $n-1$ edges.
  
  Next, let $E_1 \subseteq E(K_n)$ be the set of edges $e$ such that
  $|\mathcal{P}(e)| = 1$.  Note that $|E_1| \leq |\mathcal{P}_1|$,
  because if an edge $e$ is covered by an unique path $P$, then $P$
  cannot cover any other edge~$f$, as otherwise there would be no
  other path which covers $e$ and not $f$, a contradiction to the fact
  that $\mathcal{P}$ is strong-separating.  We have that
  \[ \sum_{e \in E(K_n)} |\mathcal{P}(e)| \geq |E_1| + 2 \left( \binom{n}{2} - |E_1| \right) = n(n-1) - |E_1| \geq n(n-1) - |\mathcal{P}_1|, \]
  which implies that
  \[ n(n-1) \leq |\mathcal{P}|(n-1) - |\mathcal{P}_1|(n-3) \leq |\mathcal{P}|(n-1), \]
  where the last inequality uses $n \geq 3$.
  This implies that $|\mathcal{P}| \geq n$.
\end{proof}

\begin{proposition} \label{proposition:lowerboundgeneral}
  For any $\alpha, \eps \in (0,1]$, the following holds for all sufficiently large $n$.
  Let $G$ be an $n$-vertex graph with $\alpha \binom{n}{2}$ edges.
  Then
  \[\ssp(G) \geq (\sqrt{3 \alpha + 1} - 1 - \eps)n.\]
\end{proposition}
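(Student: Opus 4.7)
The plan is to extend the counting argument of \Cref{proposition:lowerboundclique}, but now distinguishing edges covered by exactly one, exactly two, or at least three paths. Let $\mathcal{P}$ be a strong-separating path system in $G$ of size $k$, and for $i \in \{1,2\}$ define $E_i = \{e \in E(G) : |\mathcal{P}(e)| = i\}$. The trivial "edges per path" bound still gives
\[
\sum_{e \in E(G)} |\mathcal{P}(e)| = \sum_{P \in \mathcal{P}} |E(P)| \leq k(n-1).
\]
On the other hand, lower-bounding each summand by its multiplicity yields
\[
\sum_{e \in E(G)} |\mathcal{P}(e)| \geq |E_1| + 2|E_2| + 3\bigl(|E(G)| - |E_1| - |E_2|\bigr) = 3|E(G)| - 2|E_1| - |E_2|.
\]

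Next, I would use the strong-separation property to bound $|E_1|$ and $|E_2|$. If $e \in E_1$ is covered by the unique path $P$, then, as in \Cref{proposition:lowerboundclique}, $P$ cannot cover any edge other than $e$; hence the map $e \mapsto P$ is an injection from $E_1$ into $\mathcal{P}$, giving $|E_1| \leq k$. For $e, f \in E_2$ with $e \neq f$, the 2-element sets $\mathcal{P}(e)$ and $\mathcal{P}(f)$ are incomparable only if they are distinct, so $e \mapsto \mathcal{P}(e)$ injects $E_2$ into $\binom{\mathcal{P}}{2}$, giving $|E_2| \leq \binom{k}{2}$.

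Combining the inequalities yields the master bound
\[
3|E(G)| \leq k(n-1) + 2|E_1| + |E_2| \leq k(n-1) + 2k + \binom{k}{2} = k(n+1) + \binom{k}{2}.
\]
Plugging in $|E(G)| = \alpha\binom{n}{2}$, this becomes a quadratic inequality in $k$. Writing $k = cn$ and dividing by $n^2$, the leading terms give $\tfrac{3\alpha}{2} \leq c + \tfrac{c^2}{2}$, equivalently $(c+1)^2 \geq 3\alpha + 1$, i.e.\ $c \geq \sqrt{3\alpha + 1} - 1$.

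The only remaining task is bookkeeping of lower-order terms to absorb the $\eps$: the inequality above is really $3\alpha\,n(n-1) \leq 2k(n+1) + k(k-1)$, and solving this quadratic in $k$ exactly and then expanding to first order in $1/n$ produces an error of order $O(1/n)$, which is bounded by $\eps n$ for all sufficiently large $n$. This is the main (and only) obstacle, and it is purely computational rather than conceptual; it does not require any structural insight beyond what is already in the argument.
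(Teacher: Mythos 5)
Your proof is correct and takes essentially the same route as the paper: the same double count of $\sum_{e}|\mathcal{P}(e)|$, the same strong-separation bounds $|E_1|\leq|\mathcal{P}|$ and $|E_2|\leq\binom{|\mathcal{P}|}{2}$, and the same resulting quadratic inequality in the size of the path system. The only (cosmetic) difference is in the bookkeeping: the paper invokes the bound $\ssp(G)\leq 19n$ of Bonamy et al.\ to absorb the lower-order terms, whereas you solve the quadratic exactly, which works just as well and makes the $O(1)$ error explicit.
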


\begin{proof}
	Let $\alpha, \eps$ be given, and suppose $n$ is sufficiently large.
	Given $G$ as in the statement, let~$\mathcal{P}$ be a
        strong-separating path system of size $\ssp(G)$.
	Suppose $\beta$ is such 
  that $|\mathcal{P}| = \beta n$ (we know that $\beta \leq 19$ by the result of \cite{BBDNS2023}).
  We need to show that $\beta \geq \sqrt{3 \alpha + 1} - 1 - \eps$.
  Note that
  \begin{equation*}
  	\sum_{e \in E(G)} | \mathcal{P}(e)| = \sum_{P \in \mathcal{P}}
  	|E(P)| \leq \beta n(n-1) = 2 \beta \binom{n}{2}.
  \end{equation*}
  For $i \in \{1, 2\}$, let $E_i \subseteq E(G)$ be the set of edges
  $e$ such that $|\mathcal{P}(e)| = i$.  Then
  \begin{align}
  	2 \beta \binom{n}{2} & \geq \sum_{e \in E(G)} |\mathcal{P}(e)| \geq |E_1| + 2 |E_2| + 3
  	\left( \alpha \binom{n}{2} - |E_1| - |E_2| \right) \nonumber \\
  	& = 3 \alpha \binom{n}{2} - 2 |E_1| - |E_2|. \label{eq:lower2}
  \end{align}
  Since $\calP$ is strong-separating, if $e\in E_2$, then
  the two paths of $\calP$ that contain $e$ cannot both contain any
  other edge $f\in E_2$. Thus, $|E_2| \leq \binom{|\mathcal{P}|}{2} \leq
  \binom{\beta n}{2} \leq \beta^2 \binom{n}{2} + \beta^2n$. Note that we
  also have $|E_1| \leq |\mathcal{P}| \leq \beta n$. 
  Applying these bounds on $|E_1|$ and $|E_2|$ in~\eqref{eq:lower2}, we get
  \[
  \beta^2 \binom{n}{2} +  2\beta \binom{n}{2} 
  \geq 3 \alpha \binom{n}{2} - \beta^2n - 2\beta n
  \geq 3 \alpha \binom{n}{2}- 400n,
  \]
  where in the last step we used $\beta \leq 19$ to get $\beta^2 n + 2
  \beta n \leq 400n$.  Thus the inequality $\beta^2 + 2\beta \geq 3
  \alpha - 800/n$ holds. Since $\beta > 0$ and $n$ is sufficiently
  large, solving this quadratic equation in terms of $\beta$ gives
  that $\beta \geq \sqrt{3 \alpha + 1} - 1 - \eps$, as desired.
\end{proof}

\section{Preliminaries} \label{section:preliminaries}

\subsection{Hypergraph matchings}

We use a recent result by Glock, Joos, Kim, Kühn, and
Lichev~\cite{GJKKL2022} (similar results were obtained also by
Delcourt and Postle~\cite{DelcourtPostle2022}).  This result allows us
to find almost perfect matchings in hypergraphs $\calH$ which avoid
certain `conflicts'.  Each conflict is a subset of edges $X \subseteq
E(\calH)$ which we do not want to appear together in the matching $M$,
i.e., we want $X \not\subseteq M$ for all such conflicts $X$.  We
encode these conflicts using an auxiliary `conflict hypergraph'
$\calC$ whose vertex set is $E(\calH)$ and each edge is a different
conflict, i.e., each edge of $\calC$ encodes a set of edges of
$\calH$.

Given a (not necessarily uniform) hypergraph $\calC$ and $k \geq 1$, 
let $\calC^{(k)}$ denote the subgraph of $\calC$ consisting of all edges 
of size exactly $k$.  
If $\calC = \calC^{(k)}$, then $\calC$ is a \emph{$k$-graph}. 
For a hypergraph $\mathcal{H}$ and $j \geq 1$, 
let $\delta_j(\mathcal{H})$ (resp.\ $\Delta_j(\mathcal{H})$) be the 
minimum (resp.\ maximum) of the number of edges of $\mathcal{H}$ which contain $S$,
taken over all $j$-sets $S$ of vertices.
We say that a hypergraph $\mathcal{H}$ is \emph{$(x \pm y)$-regular} if $x - y \leq \delta_1(\mathcal{H}) \leq \Delta_1(\mathcal{H}) \leq x
+ y$.  
Let $N_{\calH}(v)$ denote the set of neighbours of $v$ in~$\calH$. 
Given a hypergraph $\mathcal{C}$ with $V(\mathcal{C}) = E(\mathcal{H})$, 
we say $E \subseteq E(\mathcal{H})$ is \emph{$\mathcal{C}$-free} 
if for every $C \in E(\mathcal{C})$, $C$ is not a subset of $E$. 
Also, $\mathcal{C}$ is a \emph{$(d, \ell, \rho)$-bounded conflict system} for $\calH$ if
\begin{enumerate}[(C1),labelindent=0pt,labelwidth=\widthof{\ref{c1last}},leftmargin=!]
	\item $3 \leq |C| \leq \ell$ for each $C \in \mathcal{C}$;
	\item $\Delta_1(\mathcal{C}^{(j)}) \leq \ell d^{j-1}$ for all $3 \leq j
	\leq \ell$; and 
	\item $\Delta_{j'}(\mathcal{C}^{(j)}) \leq \ell d^{j-j'-\rho}$ for all
	$3 \leq j \leq \ell$ and $2 \leq j' < j$.\label{c1last}
\end{enumerate}
We say that a set of edges $Z \subseteq E(\mathcal{H})$ is 
\emph{$(d, \rho)$-trackable}\footnote{ This corresponds to the $j=1$ and $\varepsilon = \rho$ case of the definition of \emph{$(d, \varepsilon, \mathcal{C})$-trackable test systems} of~\cite[Section 3]{GJKKL2022}. The original definition requires more properties but reduces to the definition we have given when $j=1$. In particular, $\mathcal{C}$ does not play a role anymore, so we opted for removing it from the notation.} if $|Z| \geq d^{1+\rho}$.

\begin{theorem}[{\cite[Theorem 3.2]{GJKKL2022}}] \label{theorem:hypergraphmatching}
	For all $k, \ell \geq 2$, there exists $\rho_0 > 0$ such that for all 
	$\rho \in (0, \rho_0)$ there exists $d_0$ so that the following holds for all $d \geq d_0$.
	Suppose $\calH$ is a $k$-graph on $n \leq \exp(d^{\rho^3})$ vertices with 
	$(1 - d^{-\rho})d \leq \delta_1(\calH) \leq \Delta_1(\calH) \leq d$ 
	and $\Delta_2(\calH) \leq d^{1 - \rho}$ and suppose $\calC$ is 
	a $(d, \ell, \rho)$-bounded conflict system for $\calH$.
	Suppose $\mathcal{Z}$ is a set of $(d, \rho)$-trackable 
	sets of edges in $\calH$ with ${|\mathcal{Z}| \leq \exp(d^{\rho^3})}$.
	Then, there exists a $\calC$-free matching $\calM \subseteq \calH$ of size at least 
	${(1 - d^{-\rho^3})n/k}$ with $|Z \cap \calM| = (1 \pm d^{-\rho^3}) |\calM| |Z| / |E(\calH)|$ 
	for all $Z \in \mathcal{Z}$.
\end{theorem}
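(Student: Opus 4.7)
The plan is to establish the theorem via the semi-random (Rödl nibble) method, enhanced to avoid the conflict system $\calC$ and to simultaneously track each witness set in $\mathcal{Z}$. The matching $\calM$ would be constructed iteratively across $T = \Theta(p^{-1} \log d)$ rounds: in each round one independently marks every surviving edge of $\calH$ with probability $p/d$ for some small $p$, and then prunes by discarding every marked edge that either intersects a previously selected edge or that, together with edges selected in earlier rounds, would complete some $C \in \calC$.

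The analytic heart is a trajectory argument. After round $i$, let $\calH_i$ denote the subgraph of surviving edges. I would track $d_{\calH_i}(v)$ at every vertex, $d_{\calH_i}(u,v)$ at every pair, the intersections $|Z \cap E(\calH_i)|$ for each $Z \in \mathcal{Z}$, and, for every edge $e$ and every $2 \leq j' \leq \ell$, the number of partially-completed conflicts in $\calC^{(j)}$ that contain $e$ together with $j'-1$ already-selected edges. Each quantity is expected to evolve along a deterministic trajectory, e.g.\ $d_{\calH_i}(v) \approx d(1-p)^i$ and $|E(\calH_i) \cap Z| \approx |Z|(1-p)^i$. Freedman's martingale inequality, combined with the small-codegree hypothesis $\Delta_2(\calH) \leq d^{1-\rho}$ (to bound one-step differences), would give concentration of each tracked quantity within a factor of $1 \pm d^{-\rho^3}$ with failure probability $\exp(-d^{\Omega(\rho^3)})$. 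Conditions~(C2) and~(C3) are calibrated so that in each round the expected number of edges lost to the conflict-cleaning step is at most a $d^{-\rho}$ fraction of the surviving edges and so fits within the error budget; in particular, the bound $\Delta_{j'}(\calC^{(j)}) \leq \ell d^{j-j'-\rho}$ controls how many ways a candidate edge can be ``ambushed'' by $j-1$ previous selections from a common conflict.

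The main obstacle will be coupling the conflict avoidance with the $\mathcal{Z}$-tracking: the probability that a surviving edge $e$ is discarded in a given round must be shown to be essentially independent of whether $e \in Z$, so that $|\calM \cap Z|$ remains proportional to $|Z|$ within the allowed $d^{-\rho^3}$ slack. I would handle this by expressing the conditional rejection probability in terms of purely local statistics (partial-conflict counts and pair-degrees at $e$), both of which lie on their trajectories uniformly over $e \in E(\calH)$, so that the proportional count is preserved in expectation and then by concentration. After $T$ rounds, the set of uncovered vertices has size at most $d^{-\rho^3} n/k$, which already meets the stated lower bound on $|\calM|$, so no separate finishing step is required. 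The final union bound over all tracked quantities is feasible precisely because both $|V(\calH)|$ and $|\mathcal{Z}|$ are bounded by $\exp(d^{\rho^3})$, which is the quantitative smallness built into the hypotheses.
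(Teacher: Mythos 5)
A contextual point first: the paper does not prove this statement at all --- it is quoted verbatim as Theorem~3.2 of Glock, Joos, Kim, K\"uhn and Lichev~\cite{GJKKL2022} and used as a black box --- so there is no internal proof to compare against. What you have sketched is a reconstruction of the original argument of~\cite{GJKKL2022}, which indeed proceeds by a semi-random/random-greedy process with trajectory tracking and Freedman-type martingale concentration, so your outline points in the right general direction.

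As a proof, however, it has concrete gaps. (1) You never use the trackability hypothesis $|Z| \geq d^{1+\rho}$. This is essential: the conclusion is the multiplicative estimate $|Z \cap \calM| = (1 \pm d^{-\rho^3})|\calM||Z|/|E(\calH)|$, and for small $|Z|$ (say $|Z| = O(1)$) no concentration argument can deliver it; the relative-error analysis via Freedman must beat the one-step differences precisely because $|Z|$ is polynomially large in $d$, and your sketch gives no accounting of this. (2) Your pruning rule discards a marked edge only when it clashes with edges selected in \emph{earlier} rounds; in a nibble one must also handle intersections and conflicts created entirely within a single round's batch, and for conflicts of size up to $\ell$ this requires bounding the number of partially-completed conflicts that can close inside one batch. (3) The central difficulty --- showing that the conditional rejection probability is uniform over edges, i.e.\ that the partial-conflict counts and codegrees of the evolving hypergraph themselves stay on trajectory so that the (C2)--(C3)-type boundedness is preserved throughout the process --- is exactly what the bulk of the original paper is devoted to, via a carefully chosen family of tracked random variables and a verification that their one-step changes are admissible for Freedman's inequality; in your sketch this is asserted (``both of which lie on their trajectories uniformly'') rather than proved. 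So the proposal is a reasonable roadmap to the known proof, but it is not itself a proof; since this result enters the present paper only as an external tool, the appropriate course is to cite it rather than rederive it.
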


\subsection{Counting cycles}

Let $D_n$ be the complete digraph (having all arcs in both directions).
The following lemma is a simple counting argument that will be used later.

\begin{lemma} \label{lemma:countingcycles}
	If $R \subseteq E(D_n)$ has $\ell < j$ edges, then there are at most~$j^\ell n^{j - \ell - 1}$ 
	length-$j$ directed cycles in $D_n$ which contain $R$.
\end{lemma}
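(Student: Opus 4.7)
The plan is to count each length-$j$ directed cycle in $D_n$ containing $R$ by parametrising it via a rooted vertex sequence, and then to bound the number of such parametrisations by a two-step choice: first positions for the arcs of $R$, then the remaining entries of the sequence.

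First, I would record a structural prerequisite. If $R$ is to be contained in some length-$j$ directed cycle, then every vertex of $R$ must have in-degree and out-degree at most one in $R$, and $R$ must be acyclic (since $|R|=\ell<j$). Consequently, $R$ is a vertex-disjoint union of some number $c\geq 1$ of directed paths, spanning $\ell+c\geq \ell+1$ vertices. If $R$ fails this structure then no cycle contains it and the bound is trivially $0$, so I may assume this setup.

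Next, I would pass from cycles to rooted cyclic sequences. Every length-$j$ directed cycle corresponds to $j$ rooted sequences $(v_1,\ldots,v_j)$ with arcs $(v_i,v_{i+1\bmod j})$, all of which lie in $D_n$, so the number of unrooted cycles containing $R$ is at most the number of such rooted sequences whose arcs include $R$. To bound the latter, I assign to each $e\in R$ its position $\phi(e)\in\{1,\ldots,j\}$ in the cyclic ordering, and count by first choosing a map $\phi\colon R\to\{1,\ldots,j\}$ (at most $j^\ell$ options, dropping injectivity and consistency constraints for the upper bound) and then filling in the remaining entries. Given $\phi$, the entries $v_i$ for $i$ in the set $S:=\{\phi(e),\,\phi(e)+1\bmod j : e\in R\}$ are forced by the endpoints of the arcs of $R$, and by the structural observation $|S|\geq \ell+1$. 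The remaining at most $j-\ell-1$ entries can be chosen from $V(D_n)$, contributing at most $n^{j-\ell-1}$ completions. Multiplying yields the claimed upper bound $j^\ell n^{j-\ell-1}$.

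The only nontrivial ingredient is the lower bound $|S|\geq \ell+1$, which reduces directly to the structural observation that an $\ell$-arc subgraph of a length-$j$ directed cycle (with $\ell<j$) is a disjoint union of directed paths and therefore occupies at least $\ell+1$ vertices; the rest is a routine counting argument. Note that no additional correction for rotations is needed for the stated bound, since the inequality ``\#unrooted cycles $\leq$ \#rooted sequences'' is used in a single direction; in fact one could obtain the slightly tighter bound $j^{\ell-1}n^{j-\ell-1}$ by dividing by the $j$-fold rotational overcounting, but the weaker form is enough for the intended application.
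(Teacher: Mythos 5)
Your proof is correct and follows essentially the same argument as the paper: both reduce to the observation that $R$ must be a vertex-disjoint union of directed paths spanning at least $\ell+1$ vertices, then bound the count by choosing positions in the cycle (you place each arc, the paper places each path, both giving at most $j^\ell$ options) and filling in the remaining at most $j-\ell-1$ vertices in at most $n^{j-\ell-1}$ ways. The rooted-sequence formalisation and the aside about saving a factor of $j$ are fine but do not change the substance.
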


\begin{proof}
We can assume that $R$ is a proper subgraph of some directed cycle (as
otherwise there is nothing to count).  Thus, $R$ is a collection of
vertex-disjoint paths in $D_n$ with exactly~$\ell$ edges in total.  All
directed cycles on $j$ vertices which contain $R$ can be obtained by
assigning a number in $\{1, \dotsc, j\}$ to allocate the starting
position for each of the paths in the cycle, and then choosing each of
the $j - |V(R)|$ remaining vertices.  Note that $R$ can consist of at
most $\ell$ paths, so the first step can be done in at most $j^\ell$
ways.  On the other hand, $R$ spans at least $\ell + 1$ vertices (the
minimum number occurs when $R$ is a single path), so there are at most $n^{j
  - \ell - 1}$ ways to choose the vertices in the second step.
Therefore, $R$ is contained in at most~$j^\ell n^{j - \ell - 1}$
length-$j$ directed cycles in $D_n$.
\end{proof}

\subsection{Probabilistic tools}

In this short section we state some standard probabilistic tools used
in our proof. 

\begin{lemma}[Chernoff's inequalities~{\cite[Remark 2.5, Corollaries 2.3 and 2.4]{JLR00}}]
  \label{lemma:chernoff}
  Let $X$ be a random variable with binomial distribution $B(n,p)$.
  Let $t \geq 0$.
  Then, 
  \begin{enumerate}
  	\item \label{chernoff:standard} $\probability[|X - \expectation[X]| \geq t] \leq
  	2\exp(-2t^2/n)$;
  	\item \label{chernoff:smallexpectation} if $t \leq 3 \expectation[X] / 2$, then
  	$\probability[|X - \expectation[X]| \geq t] \leq
  	2\exp(-{t^2}/({3\expectation[X]}))$; and
  	\item \label{chernoff:verysmallexpectation} if $t \geq 7
          \expectation[X]$, then $\probability[X \geq t] \leq
          \exp(-t)$.
  \end{enumerate}
\end{lemma}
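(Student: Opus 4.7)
The plan is to derive all three bounds from the standard moment generating function (MGF) method, which is the route taken in Janson--{\L}uczak--Ruci\'nski~\cite{JLR00}. Writing $X = X_1 + \dotsb + X_n$ as a sum of independent Bernoulli$(p)$ indicators, the product formula together with $1+y \le e^y$ yields
\[\expectation\!\left[e^{\lambda X}\right] = (1 - p + p e^{\lambda})^n \le \exp\bigl(np(e^{\lambda} - 1)\bigr)\]
for every real $\lambda$. Applying Markov's inequality to $e^{\lambda X}$ (with $\lambda > 0$ for the upper tail and $\lambda < 0$ for the lower tail) then gives the master bound
\[\probability[X \ge \expectation[X] + t] \le \exp\bigl(np(e^{\lambda} - 1) - \lambda(np + t)\bigr),\]
together with its mirror for $\probability[X \le \expectation[X] - t]$. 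All three parts are then obtained by making appropriate choices of $\lambda$ and invoking elementary analytic inequalities.

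For part \eqref{chernoff:standard} I would use Hoeffding's lemma: since each $X_i - p$ takes values in $[-p, 1-p] \subseteq [-1,1]$, one has $\expectation[e^{\lambda (X_i - p)}] \le e^{\lambda^2/8}$; multiplying over $i$ and then optimizing by taking $\lambda = 4t/n$ produces $\probability[X - \expectation[X] \ge t] \le \exp(-2t^2/n)$. The same argument applied to $-X$ controls the lower tail, and a union bound supplies the factor of $2$.

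For part \eqref{chernoff:smallexpectation}, I would choose $\lambda = \log(1 + t/\expectation[X])$ in the master bound; a short calculation reduces the exponent to $-\expectation[X]\,h(t/\expectation[X])$ with $h(y) = (1+y)\log(1+y) - y$. Combining the elementary inequality $h(y) \ge y^2/(2 + 2y/3)$ (valid for $y \ge 0$) with the hypothesis $t \le 3\expectation[X]/2$, which forces $y \le 3/2$, yields the upper-tail bound $\exp\bigl(-t^2/(3\expectation[X])\bigr)$. The lower tail is handled by the symmetric choice $\lambda < 0$ and is in fact slightly stronger; doubling gives the stated inequality. Finally, for part \eqref{chernoff:verysmallexpectation} I would deduce from the master bound (with $\lambda = \log(t/\expectation[X])$) the well-known corollary
\[\probability[X \ge t] \le \left(\frac{e\,\expectation[X]}{t}\right)^t,\]
whence the assumption $t \ge 7\expectation[X]$ forces the base to be at most $e/7 < e^{-1}$, so the right-hand side is at most $e^{-t}$.

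The only real difficulty, such as it is, lies in choosing $\lambda$ sharply enough to match the precise numerical constants $2$, $3$, and $7$ appearing in the statement; the qualitative sub-Gaussian and sub-Poissonian shapes of the three bounds are immediate from the MGF method, so what remains is careful bookkeeping of the analytic inequalities rather than any genuine conceptual obstacle.
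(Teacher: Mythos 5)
The paper does not prove this lemma at all --- it is quoted directly from Janson--{\L}uczak--Ruci\'nski --- so your task was to reconstruct a standard proof, and your overall route (the moment generating function method, Hoeffding's lemma for part (i), the Bennett-type exponent $\expectation[X]\,h(t/\expectation[X])$ with $h(y)\ge y^2/(2+2y/3)$ for part (ii)) is exactly the classical one and is carried out correctly for parts (i) and (ii).

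Part (iii), however, fails as written. You discard the factor $e^{-\expectation[X]}$ and use the crude bound $\probability[X\ge t]\le (e\,\expectation[X]/t)^t$, then claim that $t\ge 7\expectation[X]$ forces the base to satisfy $e/7<e^{-1}$. But $e/7\approx 0.388$ while $e^{-1}\approx 0.368$, so $e/7>e^{-1}$; with this crude bound you would need $t\ge e^2\expectation[X]\approx 7.39\,\expectation[X]$, which is not implied by the hypothesis. The repair is easy: keep the full Chernoff expression. Writing $\mu=\expectation[X]$, the choice $\lambda=\ln(t/\mu)$ in your master bound gives $\probability[X\ge t]\le \exp\bigl(t-\mu-t\ln(t/\mu)\bigr)$, and this is at most $e^{-t}$ precisely when $\ln(t/\mu)\ge 2-\mu/t$. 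For $s=t/\mu\ge 7$ this holds because $\ln 7\approx 1.946>2-\tfrac17\approx 1.857$ and $\tfrac{d}{ds}\ln s=1/s\ge 1/s^2=\tfrac{d}{ds}(2-1/s)$, so the inequality propagates to all $s\ge 7$. (One should also note the degenerate case $\mu=0$, where $X=0$ almost surely and the bound is trivial.) With that correction your argument matches the cited statement with the constant $7$.
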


The following concentration inequality will also be useful.

\begin{lemma}[McDiarmid's inequality~\cite{McDiarmid1989}]
    \label{lemma:mcdiarmid}
  Let $X_1, \ldots, X_M$ be independent random variables, with $X_i$
  taking values on a finite set $A_i$ for each $i \in [M]$. Suppose
  that $f:\prod_{j=1}^{M} A_j \rightarrow \mathbb{R}$ satisfies $|f(x)
  - f(x')| \leq c_i$ whenever the vectors $x$ and $x'$ differ only in
  the $i$th coordinate, for every $i \in [M]$. Consider the random variable $Y =
  f(X_1, \ldots, X_M)$ and $t \geq 0$.
  Then
  \begin{equation*}
    \probability[|Y - \expectation(Y)| \geq t] \leq 2 \exp\left(-
    \frac{2t^2}{\sum_{j=1}^M {c_j}^2}\right).
  \end{equation*}
\end{lemma}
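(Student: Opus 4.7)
The plan is to prove the lemma via the standard Doob martingale approach, reducing the tail bound to an application of Hoeffding's lemma. First I would introduce the martingale $Z_i = \expectation[Y \mid X_1, \ldots, X_i]$ for $i = 0, 1, \ldots, M$, so that $Z_0 = \expectation[Y]$ and $Z_M = Y$. Writing $D_i = Z_i - Z_{i-1}$ for the corresponding martingale difference sequence, one has the telescoping identity $Y - \expectation[Y] = \sum_{i=1}^M D_i$, and $\expectation[D_i \mid X_1, \ldots, X_{i-1}] = 0$ by construction.

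The key step is to show that, conditional on any realisation $(x_1, \ldots, x_{i-1})$ of the past, the random variable $D_i$ takes values in an interval of width at most $c_i$. For each $x \in A_i$, I would define
$$g(x) = \expectation\bigl[f(x_1, \ldots, x_{i-1}, x, X_{i+1}, \ldots, X_M)\bigr].$$
Conditionally on that history, $Z_i$ equals $g(X_i)$, while $Z_{i-1}$ equals the expectation of $g(X_i)$ under the distribution of $X_i$. For any $x, x' \in A_i$, the independence of $X_{i+1}, \ldots, X_M$ lets us couple the suffix across these two choices with a single realisation, so that $g(x) - g(x')$ is the expectation of a single bounded-difference comparison $f(\ldots, x, \ldots) - f(\ldots, x', \ldots)$. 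The hypothesis then yields $|g(x) - g(x')| \leq c_i$, giving the desired $c_i$ bound on the conditional range of $D_i$.

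Next I would invoke Hoeffding's lemma: any mean-zero random variable with range at most $c$ has moment generating function bounded by $\exp(\lambda^2 c^2/8)$ for every real $\lambda$. Applying this conditionally to each $D_i$ and iterating the tower property of conditional expectation from the inside out,
$$\expectation\bigl[e^{\lambda(Y - \expectation[Y])}\bigr] = \expectation\biggl[\prod_{i=1}^{M} e^{\lambda D_i}\biggr] \leq \exp\biggl(\frac{\lambda^2}{8} \sum_{i=1}^{M} c_i^2\biggr).$$
A standard Chernoff-style argument, namely Markov applied to $e^{\lambda(Y - \expectation[Y])}$ followed by the optimal choice $\lambda = 4t / \sum_j c_j^2$, then yields $\probability[Y - \expectation[Y] \geq t] \leq \exp(-2t^2 / \sum_j c_j^2)$. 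Repeating the whole argument for $-f$ (which satisfies the same bounded-differences hypothesis with the same constants $c_i$) handles the lower tail, and a union bound produces the factor of $2$ in the statement.

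The main technical point lies in the bounded-range step: it crucially uses the independence of the $X_j$'s to express $g(x) - g(x')$ as an expectation over a single coupling of $X_{i+1}, \ldots, X_M$, so that the per-coordinate bounded-differences hypothesis on $f$ can be pulled outside the expectation. Without independence, the conditional range of $D_i$ could genuinely exceed $c_i$ and the martingale-based analysis would collapse; the remainder of the argument (Hoeffding plus optimise-and-Markov) is routine.
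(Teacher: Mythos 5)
The paper does not prove this lemma at all: it is quoted as a known external result with a citation to McDiarmid (1989), so there is no in-paper argument to compare against. Your proof is the standard (indeed, essentially McDiarmid's original) bounded-differences argument — Doob martingale, conditional range bound via independence and a coupling of the suffix variables, Hoeffding's lemma for the conditional moment generating function, Markov with the optimal $\lambda = 4t/\sum_j c_j^2$, and a union bound over the two tails — and it is correct, including the constants.
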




\subsection{Building a base hypergraph}

The next lemma constructs an auxiliary hypergraph which we will 
use as a base to apply \Cref{theorem:hypergraphmatching} later.

\begin{lemma} \label{lemma:basehypergraph}
	For any $\alpha, \beta, \lambda > 0$ with $\beta = \sqrt{3 \alpha + 1} - 1 < \lambda$, 
	there exists $n_0$ such that the following holds for every $n \geq n_0$.  
	There exists a $3$-graph $J$ such that
	\begin{enumerate}[label=\upshape{\textbf{(J\arabic*)}},labelindent=0pt,labelwidth=\widthof{\ref{item:J-regular}},leftmargin=!]
		\item \label{item:J-vpartition} there is a partition $\{U_1, U_2\}$
		of $V(J)$ with $|U_1| = \lambda n$ and ${|U_2| = \lambda n \beta / 2}$;
		\item \label{item:J-2or3}there is a partition $\{J_1, J_2\}$ of $E(J)$ such that
		\begin{itemize}
			\item $e \subseteq U_1$ for each $e \in J_1$, and
			\item $|e \cap U_1| = 2$ for each $e \in J_2$;
		\end{itemize}
		\item \label{item:J-antichain} every pair $\{i,j\} \subseteq U_1$ is
		contained only in edges of $J_1$, or in at most one edge of~$J_2$;
		\item \label{item:J-codegree} $\Delta_2(J) \leq \ln^2 n$;
		\item \label{item:J-totalnumber} $J$ has $\alpha \binom{n}{2} \pm
		n^{2/3}$ edges in total; and
		\item \label{item:J-regular} $J$ is $(\beta n / \lambda, n^{2/3})$-regular.
	\end{enumerate}
\end{lemma}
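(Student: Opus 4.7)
The plan is to construct $J$ via an independent random sampling of $J_1$ and $J_2$, with probabilities chosen so that the target degree $\beta n/\lambda$ and the target edge count $\alpha\binom{n}{2}$ hold in expectation. All the conditions \ref{item:J-vpartition}--\ref{item:J-regular} then follow either directly from the construction or by standard concentration with a union bound. The only nontrivial algebraic point is the identity $\beta^2+2\beta=3\alpha$ (equivalent to $\beta = \sqrt{3\alpha+1}-1$), which is what forces the $J_1$- and $J_2$-contributions at each vertex of $U_1$ to sum exactly to $\beta n/\lambda$.

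Concretely, I would fix $U_1, U_2$ of the prescribed sizes and set $q_2 := \beta^2/\lambda^2$ (which is at most $1$ since $\beta<\lambda$) and $p_1 := 2\beta(1-\beta)/(\lambda^3 n)$ (nonnegative because $\alpha \leq 1$ forces $\beta \leq 1$). Then independently, for each pair $\{i,j\}\in \binom{U_1}{2}$, with probability $q_2$ pick a vertex $w \in U_2$ uniformly at random and include the edge $\{i,j,w\}$ in $J_2$; and independently include each triple of $\binom{U_1}{3}$ in $J_1$ with probability $p_1$. Conditions \ref{item:J-vpartition}, \ref{item:J-2or3}, and \ref{item:J-antichain} hold automatically: every $J_1$-edge lies in $U_1$, every $J_2$-edge has two vertices in $U_1$ and one in $U_2$, and the construction places at most one edge of $J_2$ per pair of $U_1$.

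For \ref{item:J-regular} and \ref{item:J-totalnumber}, a direct count using $\beta^2 + 2\beta = 3\alpha$ yields $\expectation[\deg_J(v)] = \beta n/\lambda \pm O(1)$ for every $v \in V(J)$ (the contributions add up correctly at vertices of both parts) and $\expectation[|E(J)|] = \alpha\binom{n}{2}\pm O(n)$. Each such quantity is a sum of independent Bernoullis with mean $\Theta(n)$ (or $\Theta(n^2)$ for the edge count): for $v \in U_1$ indexed by triples and pairs containing $v$, and for $w \in U_2$ indexed by the pairs of $\binom{U_1}{2}$ with event ``pair selected and attached to $w$''. The multiplicative form of Chernoff's inequality (item (ii) of \Cref{lemma:chernoff}) with deviation $t = n^{2/3}/3$ gives failure probability $\exp(-\Omega(n^{1/3}))$, which easily absorbs a union bound over $O(n)$ vertices.

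Finally, for \ref{item:J-codegree}, a pair in $\binom{U_1}{2}$ has codegree at most $1$ (from $J_2$) plus a Binomial$(\lambda n - 2, p_1)$ contribution from $J_1$, whose mean is $O(1)$; a pair $\{i,w\}$ with $i \in U_1,\ w \in U_2$ has codegree distributed as a sum of $\lambda n - 1$ independent Bernoullis of parameter $q_2/|U_2|$, also with mean $O(1)$; and pairs inside $U_2$ have codegree $0$ since every $J_2$-edge meets $U_2$ in a single vertex. Applying item (iii) of \Cref{lemma:chernoff} with $t = \ln^2 n$ (which dwarfs $7$ times each mean) gives $\probability[\text{codegree} > \ln^2 n] \leq \exp(-\Omega(\ln^2 n))$, which absorbs a union bound over $O(n^2)$ pairs. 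I do not foresee a substantive obstacle here; the only delicate point is the simultaneous algebraic choice of $p_1$ and $q_2$ matching all target degrees and the target edge count while keeping $q_2 \leq 1$ (so that \ref{item:J-antichain} holds by construction), and this is precisely what the relation $\beta^2+2\beta=3\alpha$ secures.
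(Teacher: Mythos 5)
Your construction satisfies everything except the property that \ref{item:J-antichain} is actually there to provide, and this is a genuine gap. Because you sample the $J_1$-triples inside $U_1$ independently of the $J_2$-pairs, a pair $\{i,j\}\subseteq U_1$ can end up both in a $J_2$-edge $\{i,j,w\}$ and in one (or several) $J_1$-edges $\{i,j,k\}$ with $k\in U_1$; with your parameters this happens for a constant fraction of the selected pairs with high probability, since each selected pair lies in a $J_1$-edge with probability about $2\beta(1-\beta)/\lambda^2=\Theta(1)$. The reading of \ref{item:J-antichain} under which your construction is ``automatic'' (at most one $J_2$-edge per pair) makes the clause ``contained only in edges of $J_1$'' redundant, which is a sign it is not the intended one: the condition is an antichain-type requirement, namely that a pair lying in an edge of $J_2$ lies in no other edge of $J$ at all (neither a second $J_2$-edge nor any $J_1$-edge). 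This stronger form is exactly what is needed downstream, where one must rule out that the label set $\{i,j\}$ of one edge of $G$ is contained in the label set $\{i,j,k\}\subseteq U_1$ of another; your $J$ would not support that argument.

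The fix is essentially the paper's construction: first sample the $2$-edges on $U_1$ (your $J_2$-pairs, with probability $q_2=\beta^2/\lambda^2$), and then sample the $J_1$-triples only among triples none of whose three pairs was selected, i.e.\ among triangles of the complement of the sampled $2$-graph, rescaling the triple probability by $(1-q_2)^{-3}$ so that the expected $J_1$-degree is still $3(\alpha-\beta^2)n/(2\lambda)=\beta(1-\beta)n/\lambda$. With that modification the rest of your argument goes through unchanged: your algebraic bookkeeping via $\beta^2+2\beta=3\alpha$ matches the paper's target degrees and edge count, the attachment of each selected pair to a uniformly random vertex of $U_2$ is equivalent to the paper's random partition into the stars $H_1,\dotsc,H_r$, and your concentration and codegree estimates (Chernoff plus union bounds, means $O(1)$ for codegrees) are the same as, and adequate for, \ref{item:J-codegree}--\ref{item:J-regular}.
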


\begin{proof}
	We begin by observing that $3 \alpha = 2 \beta + \beta^2$ 
	from the definition of $\beta$. Then, defining $d_2 = \beta^2n/\lambda$ 
	and $d_3 = 3(\alpha-\beta^2)n/(2\lambda)$, we obtain $d_2 + d_3 = \beta n/\lambda$.
	A \emph{$\{2,3\}$-graph} is a hypergraph whose edges have size either $2$ or~$3$. 
	We say it is an \emph{antichain} if no edge is contained in another.  
	We begin our construction by defining an antichain $\{2,3\}$-graph 
	on a set of size $\lambda n$.  Let $U_1$ be a set of $\lambda n$ vertices.  
	We claim that there is an antichain $\{2,3\}$-graph~$I$ on $U_1$ such that
	\begin{enumerate}[(F1),labelindent=0pt,labelwidth=\widthof{\ref{item:F-3}},leftmargin=!]
		\item \label{item:F-2} each vertex is adjacent to $d_2 \pm n^{2/3}$
		edges of size $2$ in $I$;
		\item \label{item:F-3} each vertex is adjacent to $d_3 \pm n^{2/3}$
		edges of size $3$ in $I$.
	\end{enumerate}
	
	Indeed, define a random graph $I^{(2)}$ on $U_1$ by including each
	edge independently with probability $p := \beta^2/\lambda^2 < 1$.
	Let $\overline{I}$ be the complement of $I^{(2)}$.  In
	expectation, each vertex is contained in around $p |U_1| = d_2$ many
	edges.
	A standard application of Chernoff's inequality (\Cref{lemma:chernoff}\ref{chernoff:standard}) shows that, with overwhelmingly large probability, each vertex of
	$U_1$ is contained in $d_2 \pm n^{2/3}$ edges of $I^{(2)}$, and thus
	we can assume that a choice of $I^{(2)}$ is fixed and satisfies that
	property. Similarly, we can also assume that every vertex is
	contained in $(1-p)^3 \binom{\lambda n}{2} \pm n^{4/3}$ triangles in
	$\overline{I}$.  Next, we form a $3$-graph $I^{(3)}$ on $U_1$ by including
	each triple of vertices which forms a triangle in $\overline{I}$
	with probability $q := 3 (\alpha - \beta^2) / ((1-p)^3 \lambda^3n)$.  If a vertex $x$ is contained in $d:=(1-p)^3 \binom{\lambda n}{2}$ 
	triangles in~$\overline{I}$, then in expectation it must be
	contained in $dq = d_3 \pm n^{1/2}$ many $3$-edges in~$I^{(3)}$.
	Using Chernoff again (\Cref{lemma:chernoff}\ref{chernoff:smallexpectation}) we can assume that each vertex in $I^{(3)}$ is contained in 
	$d_3 \pm n^{2/3}$ many triangles in $I^{(3)}$.  
	We conclude by taking $I = I^{(2)} \cup I^{(3)}$.
	
	Now we transform $I$ into a $3$-graph.  To achieve this, we will add
	a set $U_2$ of extra vertices to $I$, and extend each $2$-edge of
	$I$ to a $3$-edge by including to it a vertex in $U_2$.  Let $U_2$
	have size $r := \lambda n \beta / 2$ and vertices $\{v_1, \dotsc,
	v_r \}$.  Randomly partition the $2$-edges of $I^{(2)}$ into $r$
	sets $F_1, \dotsc, F_r$ by including each edge of~$I^{(2)}$ in 
	an $F_i$ with probability $1/r$.  Next, define sets of $3$-edges
	$H_1, \dotsc, H_r$ given by~$H_i := \{ xyv_i : xy \in F_i \}$.
	
	Let $J$ be the $3$-graph on vertex set $U_1 \cup U_2$ whose edges
	are $I^{(3)} \cup \bigcup_{i=1}^r H_i$.  Note that, by construction,
	$J$ satisfies \ref{item:J-vpartition}--\ref{item:J-antichain}, 
	so it only remains to verify \ref{item:J-codegree},
	\ref{item:J-totalnumber} and \ref{item:J-regular}.
	
	We show that \ref{item:J-codegree} holds.  Let $x, y$ be a pair of
	vertices in $V(J)$ and let us consider the possible cases.  If $x, y
	\in U_1$ and $xy \in I^{(2)}$, then $\deg_{J}(xy) = 1$, because its only
	neighbour is $v_i$ (if $xy \in F_i$).  If $x, y \in U_1$ and $xy \in
	\overline{I}$, then $\deg_{J}(xy)$ is precisely the number of
	triangles of $I^{(3)}$ that contain $xy$. This is a random variable
	with expected value at most $nq = O(1)$.  Thus, by \Cref{lemma:chernoff}\ref{chernoff:verysmallexpectation}, $\deg_{J}(xy) > \ln^2 n$ holds with
	probability at most $n^{- \ln n}$, so we can comfortably use the
	union bound to ensure that $\deg_{J}(xy) \leq \ln^2 n$ for every
	such pair $xy \in \overline{I}$.  If $x \in U_1$ and $y \in U_2$,
	then $y = v_i$ for some $1 \leq i \leq r$, and $\deg_{J}(xy)$ is the
	number of triangles of the form $xzv_i \in H_i$.  For a fixed $x$,
	there are at most $|U_1| = \lambda n$ choices for $z$ to form an
	edge $xz \in I^{(2)}$, and recall that each such edge belongs to
	$H_i$ with probability $1/r = O(1/n)$.  Thus the expected value of
	$\deg_{J}(xy)$ is again of the form $O(1)$, and we can conclude the
	argument in a similar way as before. Finally, if $x, y \in U_2$ 
	then $\deg_J(xy) = 0$ by construcion.  This finishes the proof 
	of~\ref{item:J-codegree}.
	
	Note that $|E(J)| = |E(I^{(2)})|+|E(I^{(3)})|$.  From \ref{item:F-2}, we deduce  
	that $|E(I^{(2)})|$ is {$\lambda n (d_2 \pm n^{2/3})/2 = \beta^2 n^2 /2
	\pm n^{2/3}$} and, from \ref{item:F-3}, we deduce that $|E(I^{(3)})|$ is
	{$\lambda n (d_3 \pm n^{2/3})/3 = (\alpha - \beta^2) n^2 /2 \pm
	n^{2/3}$}, so $|E(J)| = \alpha n^2 /2 \pm O(n^{2/3})$, which proves
	\ref{item:J-totalnumber}.
	
	Now we prove \ref{item:J-regular}.
	Let $i \in V(J)$. If $i \in U_1$, then $\deg_J(i) = \deg_I(i)$.
	Since $d_2 + d_3 = \beta n/\lambda$, we have that $\deg_I(i) = d_2 + d_3 +
	O(n^{2/3}) = \beta n / \lambda + O(n^{2/3})$.
	Assume now that $i \in U_2$.
	Recall that we defined $J$ in a way that
	each vertex of $U_2$ belongs to $|E(I^{(2)})|/r$ edges, and we have
	\begin{align*}
		\frac{|E(I^{(2)})|}{r}
		& = \frac{\beta^2 \binom{n}{2} \pm O(n^{4/3})}{\lambda n \beta / 2}
		= \frac{\beta n}{\lambda} \pm O(n^{1/3}),
	\end{align*} 
	which concludes the proof of the lemma.
\end{proof}

\section{Separating almost all edges} \label{section:almostall}

In this section we show how to separate most pairs of edges of
robustly-connected graphs by paths and cycles, guaranteeing additional
structural properties.

In what follows, let $\eps$, $\delta > 0$, let $L$ be an integer and
let $G$ be an $n$-vertex graph.
A \emph{2-matching} in $G$ is a collection of vertex-disjoint
cycles and paths in $G$. We say a $2$-matching $Q$ in $G$ is
\emph{$(\delta, L)$-robustly-connected} if, for every $x, y \in V(Q)$,
there exists $\ell$ with $1 \leq \ell \leq L$ such that there are at least $\delta
n^\ell$ $(x,y)$-paths with exactly $\ell$ inner vertices each, all in
$V(G) \setminus V(Q)$.  Furthermore, a collection $\calQ$ of
2-matchings in~$G$ is \emph{$(\delta, L)$-robustly-connected} if each
$Q\in\calQ$ is $(\delta, L)$-robustly-connected.  A 2-matching $Q$ in
$G$ is \emph{$\eps$-compact} if each cycle in $Q$ has length at
least~$1/\eps$ and $Q$ contains at most $\eps n$ paths.  For a
collection $\calQ$ of 2-matchings in~$G$, we say $\calQ$ is
\emph{$\eps$-compact} if each $Q$ in $\calQ$ is $\eps$-compact.

Let $\calQ = \{Q_1, \dotsc, Q_t\}$ be a collection of subgraphs of~$G$.  
We use $E(\calQ)$ to denote the set $\bigcup_{i=1}^t E(Q_i)$.  We say 
$\calQ$ \emph{separates an edge $e$ from all other edges of~$G$} if the 
set~$\{ i : e \in E(Q_i) \}$ is not contained in the set $\{ j : f \in E(Q_j) \}$ 
for each $f \in E(G) \setminus \{e\}$.  Clearly, if an edge $e$ is 
separated from all other edges of~$G$ by~$\calQ$, then $e \in E(\calQ)$.  
We also say that $\calQ$ \emph{strongly separates} a set~$E'$ of edges if, 
for every distinct $e,f \in E'$, the sets~$\{ i : e \in E(Q_i) \}$ 
and~$\{ j : f \in E(Q_j) \}$ are not contained in each other.

For brevity, we put together some of the above definitions in one
single concept, that will be used in the next result and also in some
lemmas in Section~\ref{sec:connecting} (see Lemmas~\ref{lem:acyclic}
and~\ref{lemma:connectingpaths}).

\begin{definition} \label{definition:separator}
  Given a graph $G$ and $\delta$, $L$, $\beta$ and $\eps$, we say a
  collection of $2$-matchings $\calQ$ is a \emph{$(\delta,
    L,\beta,\eps)$-separator} for $G$ if the following holds.
  \begin{enumerate}[\upshape{\textbf{(Q\arabic*)}},labelindent=0pt,labelwidth=\widthof{\ref{item:firstpacking-separating}},leftmargin=!]
  \item \label{item:firstpacking-connecting} $\calQ$ is $\eps$-compact and $(\delta,L)$-robustly-connected;
  \item \label{item:firstpacking-separating} $|\calQ| = \beta n$ and $\calQ$ strongly separates $E(\calQ)$;
  \item \label{item:firstpacking-endpoints} each vertex in $G$ is the endpoint of at most $\eps n$ paths among all $Q \in \calQ$;
  \item \label{item:firstpacking-sizes} each $e \in E(\calQ)$ is contained in at most three of the 2-matchings in $\calQ$; and
  \item \label{item:firstpacking-almostperfect}$\Delta(G - E(\calQ)) \leq \eps n$.
  \end{enumerate}
\end{definition}  

In the next result we show that large enough $(\delta, L)$-robustly-connected
``almost regular'' graphs contain a suitable collection of
$2$-matchings that is a $(\eps^\ell\delta/2, L,\beta/(1-\eps),\eps')$-separator, for any
$\eps$ and $\eps'$.

\begin{lemma} \label{lemma:firstpacking} Let $1/n \ll \eps, \eps',
  \alpha, \delta, 1/L, \rho$.  Let $\beta = \sqrt{3 \alpha + 1} - 1$.
  If $G$ is an $n$-vertex ${(\alpha n \pm n^{1 - \rho})}$-regular graph that is $(\delta,
  L)$-robustly-connected, then there exists a $(\eps^\ell\delta/2,
  L,\beta/(1-\eps),\eps')$-separator for $G$.
\end{lemma}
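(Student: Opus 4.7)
The plan is to construct $\calQ$ through a single application of \Cref{theorem:hypergraphmatching} to a carefully designed auxiliary hypergraph. I would begin by applying \Cref{lemma:basehypergraph} with $\lambda := \beta/(1-\eps)$ to obtain a $3$-graph $J$ on $U_1 \cup U_2$ with $|U_1| = \beta n/(1-\eps)$. The vertices of $U_1$ will index the $2$-matchings $Q_a$ that compose $\calQ$, while $U_2$ supplies markers allowing edges of $G$ covered by two labels to be handled uniformly with edges covered by three, through the $3$-edges of $J$. The antichain condition \ref{item:J-antichain} guarantees that, once each edge of $G$ has been assigned to one $J$-edge and the $U_2$-markers are forgotten, distinct edges of $G$ receive incomparable label sets, which gives the separation part of~\ref{item:firstpacking-separating}. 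I would then orient $G$ as a tournament to produce a digraph $D$, and build an $8$-uniform auxiliary hypergraph $\calH$ whose vertex set consists of the arcs of $D$, the edges of $J$, and ``connector tokens'' $(w, a, \star)$ with $w \in V(G)$, $a \in U_1 \cup U_2$, and $\star \in \{\text{in}, \text{out}\}$. Each edge of $\calH$ encodes one admissible assignment (``arc $(u,v)$ to $J$-edge $\eta$'') and consists of $(u,v)$, $\eta$, and the six connectors $(u, a, \text{out})$ and $(v, a, \text{in})$ for $a \in \eta$. The approximate regularity and codegree of $\calH$ required by \Cref{theorem:hypergraphmatching} will follow from the $(\alpha n \pm n^{1-\rho})$-regularity of $G$ together with \ref{item:J-codegree} and \ref{item:J-regular}.

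A matching in $\calH$ automatically ensures that (i) each edge of $G$ receives at most one label set, (ii) each $\eta \in E(J)$ is used at most once, and (iii) for each $w \in V(G)$ and $a \in U_1$, at most one out-arc and one in-arc at $w$ are assigned to a $J$-edge containing $a$, so that $Q_a := \{uv : \text{arc } (u,v) \text{ was assigned to some } \eta \ni a\}$ is a $2$-matching for each $a \in U_1$. To enforce the remaining separator properties, I would apply \Cref{theorem:hypergraphmatching} together with a conflict hypergraph $\calC$ that forbids short cycles in any $Q_a$ (for each $a \in U_1$ and each $3 \leq k < 1/\eps'$, the $k$-sets of $\calH$-edges that would realise a length-$k$ cycle in $Q_a$), too many path-endpoints of a single $Q_a$ at a common vertex, and too many paths in total in any single $Q_a$. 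The codegree bounds in~\ref{c1last} follow from \Cref{lemma:countingcycles} applied inside $D$ combined with the near-regularity of $\calH$. For robust connectivity I would additionally register, for each $(w, a) \in V(G) \times U_1$, the trackable set $Z_{w,a}$ of $\calH$-edges whose inclusion would place $w$ into $V(Q_a)$; the trackable conclusion of \Cref{theorem:hypergraphmatching} keeps $|V(Q_a)|$ close to $(1-\eps)n$ and the vertices of $V(Q_a)$ approximately uniform, so that a second-moment calculation against the $(\delta, L)$-robust-connectedness of $G$ preserves an $\eps^\ell$-fraction of the original $\delta n^\ell$ short $(x,y)$-paths as paths with inner vertices in $V(G) \setminus V(Q_a)$, yielding the robust-connectedness part of \ref{item:firstpacking-connecting}.

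Given the almost-perfect $\calC$-free matching $\calM$ produced by \Cref{theorem:hypergraphmatching}, I would set $\calQ := \{Q_a : a \in U_1\}$ and verify \Cref{definition:separator} item by item: \ref{item:firstpacking-separating} from the antichain condition on $J$; \ref{item:firstpacking-sizes} from $|\eta| \leq 3$ for each $\eta \in E(J)$; \ref{item:firstpacking-endpoints} together with the path-count and girth parts of $\eps'$-compactness from the corresponding conflicts; the robust-connectedness part of~\ref{item:firstpacking-connecting} from the trackable conclusion; and \ref{item:firstpacking-almostperfect} from the almost-perfectness of $\calM$ combined with a McDiarmid-style concentration (\Cref{lemma:mcdiarmid}) on the number of $\calH$-edges at each $G$-vertex that remain unmatched. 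The hardest part will be calibrating the conflict system and the trackable family so that \emph{all} the quantitative bounds in \ref{c1last} and the trackability threshold $d^{1+\rho}$ of \Cref{theorem:hypergraphmatching} are satisfied simultaneously; making $\calH$ close enough to regular across its three vertex types (arcs, $J$-edges, connectors) may also require a small degree-balancing step, and this calibration is ultimately what dictates the precise hierarchy $1/n \ll \eps' \ll \eps \ll \alpha, \delta, 1/L, \rho$ in the statement.
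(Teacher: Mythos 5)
Your overall architecture matches the paper's: the base $3$-graph $J$ from \Cref{lemma:basehypergraph} with $\lambda=\beta/(1-\eps)$, a random orientation $D$ of $G$, an $8$-uniform hypergraph whose edges consist of an arc, a $J$-edge and six incidence tokens, short-cycle conflicts bounded via \Cref{lemma:countingcycles}, and an application of \Cref{theorem:hypergraphmatching}. However, there are two genuine gaps. First, your plan for robust-connectedness does not work: after \Cref{theorem:hypergraphmatching} is applied, the matching $\calM$ is a deterministic object, and the only control you retain over it is through conflicts and the registered trackable sets. Knowing $|V(Q_a)|\approx(1-\eps)n$ and per-vertex coverage counts $|Z_{w,a}\cap\calM|$ gives no joint control over $\ell$-tuples of vertices, so there is no probability space on which to run the ``second-moment calculation'': you cannot conclude that, for \emph{every} pair $x,y\in V(Q_a)$, an $\eps^{\ell}$-fraction of the $\delta n^{\ell}$ short $(x,y)$-paths has \emph{all} inner vertices outside $V(Q_a)$ (note the complement has size only about $\eps n$, so a union bound over covered vertices is hopeless). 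The paper's key move, which you are missing, is to put the randomness \emph{before} the matching theorem: for each label $i$ one fixes a random set $X_i\subseteq V(G)$ (each vertex kept with probability $p=1-\eps$), deletes from $\calH'$ every token $x_1i,x_2i$ with $x\notin X_i$, and proves by McDiarmid that every pair $x,y\in X_i$ already has at least $\eps^{\ell}\delta n^{\ell}/2$ short paths avoiding $X_i$. Since then $V(Q_i)\subseteq X_i$ deterministically, robust-connectedness of each $Q_i$ is automatic; as a bonus this sparsification (degree factor $p^6$ on arc- and $J$-vertices versus $p^5$ on the surviving tokens, whose ambient degree already carries a factor $p=\beta/\lambda$) is exactly the ``degree-balancing step'' you left open.

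Second, you cannot encode ``too many paths in a single $Q_a$'' or the endpoint bound \ref{item:firstpacking-endpoints} as conflicts: a conflict is a constant-size set of hypergraph edges that must not \emph{all} lie in the matching, whereas these are deficiency conditions about matching edges that are \emph{absent} (a vertex is a path-endpoint of $Q_a$ precisely when at most one of the two relevant token-vertices is covered), and they involve $\Theta(\eps' n)$ edges anyway, violating the size bound in (C1). The paper instead derives $\eps'$-compactness, \ref{item:firstpacking-endpoints} and \ref{item:firstpacking-almostperfect} from the near-perfectness of $\calM$ together with the trackable sets $Z_i$, $Z_{x_1}$, $Z_{x_2}$ (e.g.\ $|E(Q_i)|=|Z_i\cap\calM|\geq(1-\eps'/2)|X_i|$ bounds the number of degree-one vertices, hence paths); your proposed McDiarmid argument for the leftover degree has the same flaw as above, since no randomness survives the application of \Cref{theorem:hypergraphmatching}. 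Only the short-cycle requirement is correctly handled by conflicts, as in the paper.
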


\begin{proof}
Our proof has five steps.  First, we build an auxiliary
hypergraph~$\mathcal{H}$ such that a large matching $M \subseteq
\mathcal{H}$, which avoids certain conflicts, yields a
family of subgraphs of $G$ with the desired properties.  We wish to apply
\Cref{theorem:hypergraphmatching} to find such a matching.  In the
second step, we verify that $\mathcal{H}$ satisfies the
hypotheses of \Cref{theorem:hypergraphmatching}. In the third step,
we define our conflict hypergraph $\calC$. In the fourth step, we
define some test sets and prove they are trackable. Having done
this, we are ready to apply \Cref{theorem:hypergraphmatching}, which
is done in the last step. Then we verify that the construction gives
the desired graphs.
From now, we can assume that $\rho$ is sufficiently small (since that only weakens our assumptions).
Also, we assume that $n$ is sufficiently large with respect to $\eps, \eps', \alpha, \delta, L, \rho$ so that every calculation that requires it is valid. \medskip

\noindent \emph{Step 1: Constructing the auxiliary hypergraph.}
Obtain an oriented graph $D$ by orienting each edge of $G$ uniformly
at random. Each vertex $v$ has expected in-degree and out-degree
$d_G(v)/2 = (\alpha n \pm n^{1 - \rho})/2$. So, by
Chernoff's inequality~(\Cref{lemma:chernoff}\ref{chernoff:standard}) and a union bound, we can assume that in $D$
every vertex has in-degree and out-degree of the form $\alpha n/2 \pm
2n^{1 - \rho}$.

Next, consider an auxiliary bipartite graph $B$ whose clusters are
copies $V_1$ and $V_2$ of $V(G)$, where each vertex $x \in V(G)$ is
represented by two copies $x_1 \in V_1$ and $x_2 \in V_2$,
and such that $x_1 y_2 \in E(B)$ if and only if $(x, y) \in E(D)$.
Thus we have that $|E(B)| = |E(G)| = \alpha \binom{n}{2} \pm n^{2 - \rho}$, 
because $G$ is $(\alpha n \pm n^{1 - \rho})$-regular. 
Finally, let $\lambda = \beta/(1 - \eps)$. 
Apply \Cref{lemma:basehypergraph} with $\alpha,\beta,\gamma$ to obtain 
a $3$-graph $J$ which satisfies \ref{item:J-vpartition}--\ref{item:J-regular} 
and assume that $U_1 = [\lambda n]$ and $V(J) = [|V(J)|]$.

Now we build an initial auxiliary $8$-graph $\mathcal{H}'$ as follows.  
Let $Z$ be the complete bipartite graph between clusters $V(B)$ and~$V(J)$.  
The vertex set of $\mathcal{H}'$ is ${E(B) \cup E(J) \cup E(Z)}$.  
Each edge in $\mathcal{H}'$ is determined by a
choice $x_1 y_2 \in E(B)$ and~$ijk \in E(J)$, which form an edge
together with the $6$ edges in $Z$ that join $x_1$ and $y_2$ to
$i$, $j$, and $k$. More precisely, the edge determined by $x_1 y_2
\in E(B)$ and $ijk \in E(J)$ is $\Phi(x_1 y_2, ijk) := \{x_1 y_2,
ijk, x_1 i, x_1 j, x_1 k, y_2 i, y_2 j, y_2 k \}$; and the edge set
of $\mathcal{H}'$ is given by
\[
E(\mathcal{H}') = \{ \Phi(x_1 y_2, ijk) : x_1 y_2 \in E(B), ijk
\in E(J) \}.
\]

The idea behind the construction of $\mathcal{H}'$ is as follows.
Suppose that $M$ is a matching in $\mathcal{H}'$, and that $x_1 y_2 \in
E(B)$ is covered by $M$ and appears together with $ijk \in E(J)$ in
an edge of~$M$.
By \ref{item:J-2or3}, $\{i,j,k\} \cap U_1$ has size $2$ or $3$.
Recall that we want to obtain a collection 
$\calQ := \{Q_1, \dotsc, Q_t\}$ of $2$-matchings in $G$, where $t = \lambda n$,
satisfying~\ref{item:firstpacking-connecting}--\ref{item:firstpacking-almostperfect}.
We will add edges $xy \in E(G)$ such that $\Phi(x_1y_2,ijk) \in M$ or 
$\Phi(y_1x_2,ijk) \in M$ to the graphs $Q_a$ if $a \in \{i,j,k\} \cap U_1$.  

By construction, and since $M$ is a matching, at most one edge in $B$ involving 
$x_1$ (resp.~$x_2$) appears in an edge of $M$ together with some $a \in U_1$.  
By considering the contributions of the two copies $x_1, x_2 \in V(B)$ 
of a vertex $x \in V(G)$, this means that the subgraphs $Q_a \subseteq G$ 
have maximum degree $2$, and thus these graphs are $2$-matchings in $G$, 
as we wanted. By construction and property~\ref{item:J-2or3}, 
each edge in $E(G)$ belongs to either $0$, $2$ or $3$ graphs $Q_a$.  
Importantly, property \ref{item:J-antichain} implies that, 
for two distinct edges $e, f \in E(G)$, no two non-empty sets of the type 
$\{a : e \in E(Q_a)\}$ and $\{b : f \in E(Q_b)\}$ can be contained in each other.
Straightforward calculations reveal the degrees of the vertices in $V(\mathcal{H}')$.

\begin{claim}
	The following hold.
	\begin{enumerate}
		\item $\deg_{\mathcal{H}'}(x_1 y_2) = \alpha \binom{n}{2} \pm n^{2/3}$ for each $x_1 y_2 \in E(B)$;
		\item $\deg_{\mathcal{H}'}(ijk) = \alpha \binom{n}{2} \pm n^{2 - \rho}$ for each $ijk \in E(J)$; and
		\item $\deg_{\mathcal{H}'}(x_a i) = \frac{ \alpha\beta}{\lambda} \binom{n}{2} \pm 2n^{2 - \rho}$ for each $x_a i \in E(Z)$.
	\end{enumerate}
\end{claim}

\begin{proofclaim}
	The first two points can be easily verified: given any $x_1 y_2 \in E(B)$, 
	by construction $d_{\mathcal{H}'}(x_1 y_2)$ is the number of edges in $J$, 
	which is $\alpha \binom{n}{2} \pm n^{2/3}$ by~\ref{item:J-totalnumber}.  
	Moreover, given any $ijk \in E(J)$, $d_{\mathcal{H}'}(ijk)$ is the number of 
	edges of $E(B)$, which is $\alpha \binom{n}{2} \pm n^{2 - \rho}$ by construction. 
	
	Finally, consider $x_a \in V_1$ (the case $x_a \in V_2$ is symmetric) and $i \in V(J)$.  
	The degree $\deg_{\mathcal{H}'}(x_a i)$ corresponds to the edges $\Phi(x_a y_2, ijk)$ 
	with $y_2 \in N_B(x_1)$ and $jk \in N_J(i)$.
	Next, we estimate the number of valid choices for $y_2$ and $jk$.
	There are $\deg_B(x_1) = d^+_D(x) = \alpha n / 2 \pm 2n^{1 - \rho}$ possible choices 
	for~$y_2$, and there are $\deg_J(i) = \beta n / \lambda \pm n^{2/3}$ possible choices 
	for $jk$ by~\ref{item:J-regular}.  Thus we deduce that 
	\[ \deg_{\mathcal{H}'}(x_1 i) = 
	\left( \frac{\alpha n}{2} \pm 2n^{1 - \rho} \right) \left( \frac{\beta n}{\lambda} \pm n^{2/3} \right) 
	= \frac{\alpha \beta}{\lambda} \binom{n}{2} \pm 2n^{2 - \rho}, \]
	as desired.
\end{proofclaim}

Since $\mathcal{H}'$ is not quite regular, we will actually work
with a carefully-chosen subgraph $\mathcal{H}$ of ${H}'$.  
Let
\(p
:= \beta/\lambda = 1 - \eps.
\)
For each $i \in V(J)$, select a subset $X_i \subseteq V(G)$ by including 
in $X_i$ each vertex of $G$ independently at random with probability $p$.  
This defines a family $\{X_i : i \in V(J)\}$ of subsets of $V(G)$. 
For each $x\in V(G)$, consider the random set $Y_x = \{i \in V(J) : x\in X_i\}$.
Finally, let $\mathcal{H} \subseteq \mathcal{H}'$ be the induced subgraph 
of $\mathcal{H}'$ obtained after removing all vertices $x_1 i, x_2 i \in E(Z)$ 
whenever $x \notin X_i$ (or, equivalently, $i\notin Y_x$). 
Thus, we have that
\[
E(\mathcal{H}) = \{ \Phi(x_1 y_2, ijk ) : x_1 y_2 \in E(B), ijk
\in E(J), \{x,y\} \subseteq X_i \cap X_j \cap X_k \}.
\]

\begin{claim} \label{claim:Ui}
	The following hold simultaneously with positive probability.
  \begin{enumerate}
	\item \label{nitem:Ui-Ui} $X_i$ has $p n \pm n^{2/3}$ vertices of $G$ for each $i \in V(J)$;
	\item \label{nitem:Ui-Yx} $Y_x$ has $3\alpha n/2 \pm n^{2/3}$ vertices of $J$ for each $x\in V(G)$;
	\item \label{nitem:Ui-Hregular} $\mathcal{H}$ is $(p^6 \alpha \binom{n}{2} \pm 2n^{2 - \rho})$-regular;
	\item \label{nitem:Ui-paths} for each $i \in V(J)$ and each pair of distinct 
	vertices $x, y \in X_i$, there exists~$\ell$ with $1 \leq \ell \leq L$ 
	such that there are at least $\eps^\ell\delta n^\ell/2$ $(x,y)$-paths in $G$
	with exactly $\ell$ inner vertices each, all in $V(G) \setminus X_i$.
\end{enumerate}
\end{claim}

\begin{proofclaim}
	Item~\ref{nitem:Ui-Ui} follows directly from Chernoff's inequality~(\Cref{lemma:chernoff}) as, 
	for each $i \in V(J)$, we have $\expectation[|X_i|] = pn$.  
	For~\ref{nitem:Ui-Yx}, note that $|V(J)| = \lambda (1 + \beta/2)n$ by \ref{item:J-vpartition} and, for any $x\in V(G)$, we have
	\[
	\expectation[|Y_x|] = p|V(J)| = \left(\beta + \frac{\beta^2}{2} \right)n =
	\frac{3\alpha n}{2}.
	\]
	Then \ref{nitem:Ui-Yx} also follows from Chernoff's inequality.
	For~\ref{nitem:Ui-Hregular}, observe that any given edge 
	$\Phi(x_1 y_2, ijk) \in \mathcal{H}'$ survives in $\mathcal{H}$ with probability~$p^6$.  
	Using this, we easily see that $\expectation[\deg_{\mathcal{H}}(x_1 y_2)] 
	= p^6 \deg_{\mathcal{H'}}(x_1 y_2) = p^6 \alpha \binom{n}{2} \pm n^{2 - \rho}$ 
	for each $x_1 y_2 \in E(B)$; a similar calculation holds for
	$\expectation[\deg_{\mathcal{H}}(ijk)]$ for any $ijk \in E(J)$.
	It remains to calculate the expected degree of the edges in $E(Z) \cap V(\mathcal{H})$.  
	Let $x_a i \in E(Z)$.  Conditioning on the event that $x \in X_i$
	(and thus that $x_a i \in V(\mathcal{H})$), each edge in $\mathcal{H}'$ 
	containing $x_a i$ survives with probability $p^5$.  Using this, we obtain that
	\[
	\expectation[\deg_{\mathcal{H}}(x_a i)] 
	= p^5 \deg_{\mathcal{H'}}(x_a i) 
	= p^5 \left( \frac{\alpha \beta}{\lambda} \binom{n}{2} \pm 2n^{2 - \rho} \right) 
	= p^6\alpha \binom{n}{2} \pm 2 n^{2 - \rho}.
	\]
	So~\ref{nitem:Ui-Hregular} follows from Chernoff's inequality.
	
	In the remainder of the proof we use McDiarmid's
        inequality~(\cref{lemma:mcdiarmid}) to check
        that~\ref{nitem:Ui-paths} holds.  Given $i \in V(J)$, since
        $G$ is $(\delta,L)$-robustly-connected, for each pair of
        distinct vertices $x, y \in X_i$, there exists $\ell$ with $1
        \leq \ell \leq L$ such that there are at least $\delta n^\ell$
        $(x,y)$-paths in $G$ with exactly $\ell$ inner vertices each.  Hence,
        the expected number of such paths with all internal vertices
        in $V(G)\setminus X_i$ is at least $(1-p)^{\ell}\delta
        n^{\ell}$. Since the removal or addition of a vertex in $X_i$
        changes the number of $(x,y)$-paths by $O(n^{\ell-1})$
        paths, one can check by using McDiarmid's inequality that, for
        a given $i\in V(J)$ and a pair $x, y \in X_i$, the probability
        that we have less than $\eps^\ell\delta n^\ell/2$
        $(x,y)$-paths with exactly $\ell$ inner vertices, all in $V(G)
        \setminus X_i$, is $\exp(-\Omega(n))$. Since
        $|V(J)|=O(n)$ and there are $O(n^2)$ possible pairs $x,y\in
        X_i$, item~\ref{nitem:Ui-paths} follows from the union bound.
\end{proofclaim}

From now on, we assume that the sets $\{ X_i : i \in V(J)\}$, $\{ Y_x : x \in V(G)\}$,
and the hypergraph~$\calH$ satisfy properties~\ref{nitem:Ui-Ui}--\ref{nitem:Ui-paths} 
of Claim~\ref{claim:Ui}.

\medskip

\noindent \emph{Step 2: Verifying properties of $\mathcal{H}$.}
We start by defining
\(
d := \Delta_1(\mathcal{H}).
\)
Note that from Claim~\ref{claim:Ui}\ref{nitem:Ui-Hregular}, we have
\(
d = p^6 \alpha \binom{n}{2} \pm 2n^{2 - \rho}.
\)
We will apply \Cref{theorem:hypergraphmatching} to~$\mathcal{H}$.
The following claim guarantees that $\calH$ satisfies the required hypotheses.

\begin{claim} \label{claim:hypothesesH}
	The following facts about $\calH$ hold.
	\begin{enumerate}[\upshape{\textbf{(H\arabic*)}},labelindent=0pt,labelwidth=\widthof{\ref{Hlast}},leftmargin=!]
		\item \label{item:h-vertices} $\mathcal{H}$ has at most $\exp(d^{\rho^3})$ vertices;
		\item \label{item:h-delta_1} $d(1 - d^{-\rho/3}) \leq \delta_1(\mathcal{H}) \leq \Delta_1(\mathcal{H}) = d$; and
		\item $\Delta_2(\mathcal{H}) \leq d^{2/3}$.\label{Hlast}
	\end{enumerate}
\end{claim}

\begin{proofclaim}
	Item~\ref{item:h-vertices} follows from the fact that $|V(\calH)|
	= |E(B)|+|E(J)|+|E(Z)| \leq \alpha n^2 + \alpha n^2 +
	2n(\lambda+\lambda\beta/2)n \leq \exp(d^{\rho^3})$, where the last
	inequality holds with a lot of room to spare.
	
	For~\ref{item:h-delta_1}, the upper bound follows from the
	definition of~$d$ and the lower bound follows from $\delta_1(\calH)
	\geq p^6 \alpha \binom{n}{2} - 2n^{2 - \rho} \geq d - d^{1 - \rho/3}$.
	
	It remains to verify that $\Delta_2(\mathcal{H}) \leq
	d^{2/3}$. This will require some work. First, note that each edge
	in $\mathcal{H}$ is of the form $\Phi(x_1 y_2, ijk)$ for $x_1 \in
	V_1, y_2 \in V_2$, and $ijk \in V(J)$.  We need to select two
	distinct vertices $e, f$ in $V(\mathcal{H})$ and calculate $\deg_\calH(e,f)$.  A vertex $e$ of~$\mathcal{H}$ can belong to $E(B)$,
	$E(J)$, or $E(Z)$.  We consider all the six possible combinations
	for $e, f$.
	
	Let $e, f \in V(\mathcal{H})$.  Since each edge of $\mathcal{H}$
	is of type $\Phi(x_1 y_2, ijk)$ for $x_1 y_2 \in E(B)$ and $ijk
	\in E(J)$, and each of these contains exactly one vertex in $E(B)$
	and one vertex in $E(J)$, for $e,f\in E(B)$ or $e,f \in E(J)$, we
	have $\deg_{\mathcal{H}}(e, f) = 0$.  Furthermore, since $x_1 y_2
	\in E(B)$ and $ijk \in E(J)$ completely determine the edge
	$\Phi(x_1 y_2, ijk)$, if $e \in E(B)$ and $f \in E(J)$, then
	we have $\deg_{\mathcal{H}}(e,f) \leq 1$.
	
	In view of the above discussion, we may assume that $e \in E(Z)$,
	and without loss of generality we assume $e=x_1i$ for some $x_1
	\in V_1$, and $i \in V(J)$.  There are now three cases to
	consider, depending whether $f$ belongs to $E(B)$, $E(J)$, or
	$E(Z)$.
	
	Suppose first that $f = x_1 y_2 \in E(B)$. We will count the
	number of pairs $\{j,k\}$ such that $\Phi(x_1 y_2, ijk)$ is an edge of
	$\calH$.  In particular, it must happen that $jk \in N_J(i)$,
	thus $\deg_{\mathcal{H}}(e, f) \leq \deg_{J}(i) \leq n \leq
	d^{2/3}$.  Similarly, if $f = ijk \in E(J)$, then we count the
	number of vertices $y_2\in V_2$ such that $\Phi(x_1 y_2, ijk)$ is an edge of
	$\calH$, which is at most $\deg_B(x_1) \leq n \leq d^{2/3}$.
	
	Finally, suppose that $f \in E(Z)$ and recall that $e=x_1i$. If
	$f=y_2j$ with $y_2 \in V_2$ and $j \in V(J)$, then $\deg_{H}(e,
	f)$ is the number of edges of $J$ containing $i$ and $j$. In the
	worst case, $i = j$, we have $\deg_{H}(e, f) = \deg_{J}(i) \leq n
	\leq d^{2/3}$. On the other hand, if $f=x_1 j$ with $j \in V(J)$,
	we have $j \neq i$ and then to estimate $\deg_{H}(e, f)$ we need
	to count the number of $y_2 \in N_B(x_1) \subseteq V_2$ and the
	number of $k \in N_J(ij)$. The number of choices for $y_2$ is at
	most $n$ and the number of choices for $k$ is at most $\Delta_2(J)
	\leq \log^2 n$. Therefore, $\deg_{H}(e, f) \leq n \log^2 n \leq
	d^{2/3}$, as required.
\end{proofclaim}

\medskip

\noindent \emph{Step 3: Setting the conflicts.}
We must ensure that the collection $\calQ$ of $2$-matchings we want
to obtain is $\eps'$-compact: each 2-matching in $\calQ$ has at most 
$\eps' n$ paths and each cycle in $\calQ$ has length at least $1/\eps'$. 
This condition on the cycle lengths will be encoded by using conflicts.

Recall that $D$ is the oriented graph obtained by orienting each
edge of $G$ uniformly at random. In what follows, let
\(
r:= 1/\eps'.
\)
We define our conflict hypergraph $\mathcal{C}$ on vertex set $E(\mathcal{H})$ 
and edge set defined as follows. For each $\ell$ with $3 \leq \ell \leq r$, 
each $\ell$-length directed cycle $C \subseteq D$ with vertices 
$\{v^1, \dotsc, v^\ell\}$, each $i \in U_1$, and each $j_1 k_1, \dotsc, j_\ell k_\ell \in N_J(i)$, 
we define the following edge:
\[
F(C,i,j_1k_1,\dotsc,j_\ell k_\ell) = \{\Phi(v^{a}_1 v^{a+1}_2, i\,j_a k_a) : 1 \leq a \leq \ell\},
\]
where $v^{\ell+1}_2 = v^1_2$. 
Note that $F(C, i, j_1 k_1, \dotsc, j_\ell k_\ell)$ corresponds to a set of $\ell$ edges of~$\calH'$,
associated to the triples $(i\,j_1 k_1), \dotsc, (i\,j_\ell k_\ell)$ 
and the edges of the $\ell$-length directed cycle $C$ in $D$. 
In such a case, we say $i$ is the \emph{monochromatic colour} of the conflicting cycle $C$. 
The edges of the conflict hypergraph $\mathcal{C}$ consist
of all edges of type $F(C, i, j_1 k_1, \dotsc, j_\ell k_\ell)$ which are contained in $E(\mathcal{H})$. 
The next claim establishes 
that $\calC$ is a $(d, r, \rho)$-bounded conflict system for $\calH$. 

\begin{claim} \label{claim:hypothesesC}
	The following facts about $\calC$ hold.
	\begin{enumerate}[\upshape{\textbf{(C\arabic*)}},labelindent=0pt,labelwidth=\widthof{\ref{item:Cdegs}},leftmargin=!]
		\item \label{item:size} $3 \leq |F| \leq r$ for each $F \in \mathcal{C}$; and
		\item \label{item:Cdegs} $\Delta_{j'}(\mathcal{C}^{(j)}) \leq r d^{j - j' - \rho}$ for each $3 \leq j \leq r$ and $1 \leq j' < j$. 
	\end{enumerate}
\end{claim}

\begin{proofclaim}
	Fact~\ref{item:size} is immediate from the construction of $\calC$. 
	
	In order to prove~\ref{item:Cdegs}, 
	fix $j$ and $j'$ with $3 \leq j \leq r$ and $1 \leq j' < j$.  
	To prove that $\Delta_{j'}(\calC^{(j)}) \leq r d^{j - j' - \rho}$, 
	we need to show that any set of $j'$ edges of~$\calH$ is contained 
	in at most $r d^{j - j' - \rho}$ conflicts of size $j$ in~$\calC$. 
	Let $\calR$ be any set of $j'$ edges in $\calH$, 
	say $\calR = \{ \Phi( x_1^a y_2^a, i^a j^a k^a ) : 1 \leq a \leq j' \}$.  
	We want to bound the degree of~$\calR$ in $\calC^{(j)}$. 
	Each conflict of size $j$ is defined by a length-$j$ directed cycle 
	in~$D$, the monochromatic colour of the conflict, and a corresponding choice of labels for each edge in the cycle;
	now we estimate the number of valid choices for each of these three elements.
	
	We begin by estimating how many possibilities there are for choosing a suitable cycle.
	Note that $R := \{ (x^a, y^a) : 1 \leq a \leq j' \}$ is a set of at most $j'$ edges of~$D$.  
	If~$|R|<j'$, then there are repeated edges from $D$ in $R$, 
	and in this case the degree of~$\calR$ in $\calC^{(j)}$ is zero. 
	So we can assume that $|R|=j'$ and, by~\Cref{lemma:countingcycles}, 
	there are at most $j^{j'} n^{j-j'-1}$ length-$j$ directed cycles in $D$ which contain~$R$.
	
	Now we consider the possible choices for the monochromatic colour $i$ of the conflicting cycle.
	Note that if there is no common $i \in V(J)$ among all labels $i^a j^a k^a$ for ${1 \leq a \leq j'}$, the degree of~$\calR$ in $\calC^{(j)}$ is zero, because there is no available `monochromatic colour' at all.
	This also implies that there are at most $3$ possible choices for $i$, because it must be one of the three labels which belong to $i^1 j^1 k^1$, say.
	
	Having fixed a directed cycle $C$ which contains $R$, and a monochromatic colour $i$ for the conflict,
	now we count the number of labels associated with each edge of $C$.
	For edges of~$\calR$, the choices are already given, 
	and for the remaining ${j-j'}$ edges of $C$ not in~$\calR$, the labels must be chosen among the neighbours of $i$ in the hypergraph $J$.
	Since $i$ has $\beta n / \lambda \pm n^{2/3}\leq n$ neighbours in $J$ by~\ref{item:J-regular}, 
	in this step we have at most $n^{j-j'}$ possible choices. 
	%
	Therefore,
	\[
	\Delta_{j'}(\mathcal{C}^{(j)}) \leq j^{j'} n^{j - j' - 1}
	\cdot 3 \cdot n^{j-j'} = 3 j^{j'} n^{2(j-j') - 1} \leq r d^{j - j' - \rho},
	\]
	where in the last step we used that $d = \Theta(n^2)$ and $n$ is sufficiently large.
\end{proofclaim}

\medskip

\noindent \emph{Step 4: Setting the test sets.}
For each $x_1 \in V_1 \subseteq V(B)$ and each $y_2 \in V_2 \subseteq V(B)$, define
$Z_{x_1} = \{ \Phi(x_1 y_2, ijk ) \in E(\mathcal{H}) : y_2 \in N_B(x_1), ijk \in E(J)\}$ 
and define $Z_{y_2}$ in a similar manner.  Furthermore, define 
$Z_i = \{ \Phi(x_1 y_2, ijk ) \in E(\mathcal{H}) : x, y \in X_i\}$ for each $i \in V(J)$. 
We claim that $\mathcal{Z} := \{Z_{x_1} : x_1 \in V_1 \} \cup \{ Z_{y_2} : y_2 \in V_2 \} 
\cup \{Z_{i} : i \in V(J)\}$ is a suitable family of trackable sets.
Specifically, the next claim shows that $\calZ$ is not very large and has only $(d,\rho)$-trackable sets.

\begin{claim} \label{claim:trackable}
	The following facts about $\calZ$ hold.
	\begin{enumerate}[\upshape{\textbf{(Z\arabic*)}},labelindent=0pt,labelwidth=\widthof{\ref{item:Zlast}},leftmargin=!]
		\item $|\mathcal{Z}| \leq \exp(d^{\rho^3})$; and
		\item \label{item:Zlast} each $Z \in \mathcal{Z}$ is $(d,\rho)$-trackable.
	\end{enumerate}
\end{claim}

\begin{proofclaim}
	Because $| \mathcal{Z} | = |V(B)| + |V(J)| \leq 3n$ and
	$d=\Theta(n^2)$, we have ${|\mathcal{Z}| \leq \exp(d^{\rho^3})}$.  
	It remains to check \ref{item:Zlast}, which means to prove that $|Z_v| \geq d^{1+\rho}$ for each $v \in V_1 \cup V_2$, and $|Z_i| \geq d^{1+\rho}$ for each $i \in V(J)$.
	
	First, suppose $v = x_1 \in V_1$.  From~\ref{item:h-delta_1} and 
	$\delta(B) \geq \alpha n/2 - 2n^{1 - \rho} \geq \alpha n/3$, we have
	\begin{align*}
		|Z_{x_1}| & = \sum_{y_2 \in N_B(x_1)} \deg_{\mathcal{H}}(x_1 y_2)
		\geq \delta_1(\mathcal{H}) |N_B(x_1)|
		\geq d(1-d^{-\rho/3}) \alpha n / 3 \geq d^{1+\rho},
	\end{align*}
	where in the last step we used that $d = \Theta(n^2)$ and that $n$ is large.  
	The calculations for $v = y_2 \in V_2$ are identical. 
	Next, we note that for any $i\in V(J)$ we have
	\begin{align*}
		\qquad |Z_{i}| & = \sum_{x \in X_i} \deg_{\mathcal{H}}(x_1 i)
		\geq \delta_1(\mathcal{H}) |X_i|
		\geq d(1-d^{-\rho/3})(pn-n^{2/3})
		\geq d^{1+\rho}. \qedhere
	\end{align*}
\end{proofclaim}

\medskip

\noindent \emph{Step 5: Finishing the proof.}
Recall that $d=\Delta_1(\calH)$. By Claims~\ref{claim:hypothesesH},
\ref{claim:hypothesesC} and~\ref{claim:trackable}, we can apply
\Cref{theorem:hypergraphmatching} to~$\mathcal{H}$, using
$\mathcal{C}$ as a conflict system and $\mathcal{Z}$ as a set of
trackable sets, and $\rho/3$ in place of $\rho$. By doing so, we obtain a matching
$\mathcal{M} \subseteq \mathcal{H}$ such that
\begin{enumerate}
	\item $\mathcal{M}$ is $\mathcal{C}$-free,
	\item \label{item:matching-size}$\mathcal{M}$ has size at least $(1
	- d^{- (\rho/3)^3})|V(\mathcal{H})|/8$, and
	\item \label{item:matching-Zz}$|Z_{a} \cap \mathcal{M}| = (1 \pm d^{-
		(\rho/3)^3})|\mathcal{M}||Z_a|/|E(\mathcal{H})|$ for each $a
	\in V(B)\cup V(J)$.
\end{enumerate}

Recall that $\lambda = \beta/(1-\eps)$ and let $t = \lambda n$.
Using $\mathcal{M}$, we define the graphs $\{ Q_i \}_{i=1}^t$ as follows.  
For an edge $x_1 y_2 \in E(B)$, suppose there exists $ijk \in E(J)$ 
such that $\Phi(x_1 y_2, ijk) \in \mathcal{M}$.  
In that case, we will add the edge $xy \in E(G)$ to the graph $Q_a$ 
such that $a \in \{i, j, k \} \cap U_1$.  
To finish, we verify that $\calQ = \{Q_i\}_{i=1}^t$ is an $(\eps^\ell\delta/2, L, \lambda, \eps')$-separator for $G$,
which means we need to show that $\calQ$ is a collection of $2$-matchings in~$G$ that satisfies \ref{item:firstpacking-connecting}--\ref{item:firstpacking-almostperfect} with $\eps^\ell\delta/2$, $\lambda$ and $\eps'$ in the place of $\delta$, $\beta$ and $\eps$, respectively.

We start by verifying that $\calQ$ is a collection of 2-matchings.  Note that, 
for each $1 \leq i \leq t$, the graph $Q_i$ has maximum degree at most $2$.  
Indeed, let $x \in V(G)$ be any vertex.  
Since $\mathcal{M}$ is a matching in $\calH$, at most two edges in $\mathcal{M}$ 
can cover the vertices $x_1 i, x_2 i \in V(\mathcal{H})$; and this 
will yield at most two edges adjacent to $x$ belonging to $Q_i$.

Now we verify that \ref{item:firstpacking-connecting} holds.
First, we check that $\calQ$ is $\eps'$-compact, that is, each 2-matching in~$\calQ$ 
has at most $\eps' n$ paths and each cycle in~$\calQ$ has length at least $1/\eps'$. 
The latter holds because we avoided the conflicts in $\mathcal{C}$.  
More precisely, an $\ell$-cycle in~$Q_i$ corresponds to a sequence of $\ell$ edges, 
all of which are in~$Q_i$.  This means the cycle was formed from a length-$\ell$ 
directed cycle in~$D$, all of whose edges were joined (via~$\mathcal{M}$) to triples 
in $J$, all containing vertex $i \in V(J)$.  Recall that $r = 1/\eps'$.
If $\ell \leq r$, this forms a conflict in $\mathcal{C}$, 
so, as $\mathcal{M}$ is $\mathcal{C}$-free, we deduce that $\ell > r$.
To check that $Q_i$ has few paths, first observe that $V(Q_i) \subseteq X_i$ for each $i \in V(J)$.  
Indeed, if $xy \in E(Q_i)$, then we have that, say, $(x,y) \in E(D)$ and 
$\Phi(x_1 y_2,ijk)$ is an edge in $\mathcal{M}$ for some $j, k$.  
But, since $\mathcal{M} \subseteq \mathcal{H}$, by the definition of $\mathcal{H}$,
we have $x, y \subseteq X_i$, as required.
Now, from \ref{item:matching-size}, \ref{item:matching-Zz}, the fact that 
$\calH$ is an 8-graph close to $d$-regular, and $|Z_i| \leq d|X_i|$, we have that 
$|E(Q_i)| = |Z_i \cap \mathcal{M}| \geq (1 - \eps'/2)|X_i| \geq (1 - \eps'/2)|V(Q_i)|$,
and then the number of degree-one vertices in $Q_i$ is at most 
$2(|V(Q_i)|-|E(Q_i)|) \leq \eps' |V(Q_i)| \leq \eps' n$.
To see the second part of \ref{item:firstpacking-connecting}, 
we need to show that $\mathcal{Q}$ is $(\eps^\ell\delta/2, L)$-robustly-connected.
Because $V(Q_i) \subseteq X_i$, we deduce from Claim~\ref{claim:Ui}\ref{nitem:Ui-paths} 
that $Q_i$ is $(\eps^\ell\delta/2, L)$-robustly-connected, as required.
We conclude that \ref{item:firstpacking-connecting} holds.

We have already stated that $|\mathcal{Q}| = \lambda n$, so the first part of \ref{item:firstpacking-separating} holds.
The second part of~\ref{item:firstpacking-separating} can be checked as follows.
Let $e, f$ be distinct edges of $E(\calQ)$. Thus, there are orientations
$(x,y), (x',y') \in E(D)$ of $e, f$ respectively, and edges $ijk, i'j'k' \in E(J)$ 
such that $\Phi(x_1 y_2, ijk)$ and $\Phi(x'_1 y'_2, i'j'k')$ belong to $\mathcal{M}$.  
We have, respectively, that $A_e := \{ a : e \in E(Q_a) \} = \{i, j, k\} \cap U_1$
and $A_f := \{ a : f \in E(Q_a) \} = \{i', j', k'\} \cap U_1$.
For a contradiction, suppose $A_e \subseteq A_f$. If $|A_e| = 3$, then 
we would have that $ijk = i'j'k'$, contradicting that $\mathcal{M}$ 
is a matching, so $|A_e| = 2$; say, $A_e = \{i, j\}$, and $ij$ is a
pair in~$V_1$ (from the construction of $J$).  We recall that
by~\ref{item:J-antichain} no pair $ij$ is contained both in an edge
with intersection $2$ and $3$ with $V_1$, so this rules out the case
$|A_f| = 3$.  Thus we can only have $A_e = A_f = \{i,j\}$.  But
again \ref{item:J-antichain} implies $ij$ is contained in a unique
edge in $J$, say, $ijr$.  This implies that $ijk = i'j'k' = ijr$,
contradicting the fact that $\mathcal{M}$ is a matching.
Therefore $\calQ$ strongly separates $E(\calQ)$, and~\ref{item:firstpacking-separating} holds.

To prove \ref{item:firstpacking-endpoints}, let $x\in V(G)$. Recall 
that $Y_x\subseteq V(J)$ is the random set $Y_x = \{i : x\in X_i\}$ and 
from Claim~\ref{claim:Ui}\ref{nitem:Ui-Yx} we have that $|Y_x| = 3\alpha n/2 \pm n^{2/3}$. 
Note that if $x$ is the end of a path in some $2$-matching $Q_i$, 
then there is an edge $\Phi(x_1 y_2, ijk)$ in $Z_{x_1} \cap \calM$, 
but no edge $\Phi(x_2y_1, ijk)$ is in $Z_{x_2} \cap \calM$; 
or there is an edge $\Phi(x_2 y_1, ijk)$ in $Z_{x_2} \cap \calM$, 
but no edge $\Phi(x_1y_2, ijk)$ is in $Z_{x_1} \cap \calM$.  
This motivates the following definition: for each $x\in V(G)$, 
a set $F(x_1)$ of indexes $i\in V(J)$ such that there is an
edge $\Phi(x_1y_2,ijk)$ in $\calM$; and a set $F(x_2)$ of indexes
$i\in V(J)$ such that there is $\Phi(x_2y_1,ijk)$ in $\calM$.  
Note that, from the way we construct $\calH$, we know that 
$F(x_1),F(x_2) \subseteq Y_x$.  In view of the above discussion, 
the number of times $x$ is the endpoint of a path in the $2$-matchings 
of $\calQ$ is the number of indexes $i\in Y_x$ such that 
$i\notin F(x_1)\cap F(x_2)$. Therefore, this number of indexes $i$ 
such that $x$ is the endpoint of a path in $Q_i$ is at most
\begin{align*}
|Y_x\setminus F(x_1)|+|Y_x\setminus F(x_2)|
&\leq 3\alpha n + 2n^{2/3} - 3(|Z_{x_1}\cap\calM| +
|Z_{x_2}\cap\calM|)\\
&\leq 3\alpha n + 2n^{2/3} - 3(\deg_G(x)-\eps'n/2)\\
&\leq 3\alpha n + 2n^{2/3} - 3(\alpha n - n^{1-\rho}-\eps'n/2)\\
&\leq \eps'n,
\end{align*}
where in the first inequality we use the facts that $|F(x_1)| =
3|Z_{x_1}\cap\calM|$ and $|F(x_2)| = 3|Z_{x_2}\cap \calM|$, and also
that $Y_x \leq 3\alpha n + n^{2/3}$; in the second inequality we
use~\eqref{eq:Zx1Zx2}; the third inequality follows from
$\deg_G(x)\geq \alpha n - n^{1-\rho}$; and since $n$ is sufficiently
large, the last inequality holds. This
verifies~\ref{item:firstpacking-endpoints}.

To see \ref{item:firstpacking-sizes}, let $e \in E(\mathcal{Q})$ be arbitrary.
As explained before, there exists an orientation $(x,y) \in E(D)$ of $e$ and an edge $ijk \in E(J)$ such that $\Phi(x_1 y_2, ijk)$ belongs to $\calM$, and $\{ a: e \in E(Q_a) \} = \{i, j, k\} \cap U_1$.
Since the latter set obviously has at most three elements, \ref{item:firstpacking-sizes} follows. 

Finally, property \ref{item:firstpacking-almostperfect} follows from the
properties of the chosen test sets.  More precisely, we want to prove 
that $\Delta(G') \leq \eps'n$ for $G':=G-E(\calQ)$. Since for
any $x\in V(G)$ we have $\deg_{G'}(x) = \deg_G(x)- (|Z_{x_1} \cap
\mathcal{M}| + |Z_{x_2} \cap \mathcal{M}|)$, it is enough to prove
that $|Z_{x_1} \cap \mathcal{M}| + |Z_{x_2} \cap \mathcal{M}| \geq
\deg_G(x) - \eps'n$. For that, by using~\ref{item:matching-size}
and~\ref{item:matching-Zz} and the facts that $|E(\calH)| \leq
|V(\calH)|\Delta_1(\calH)/8$ and $|Z_{x_1}| +|Z_{x_2}| \geq
\delta_1(\calH)(|N_B(x_1)|+|N_B(x_2)|) \geq d(1-d^{-\rho/3})\deg_G(x)$, 
we have the following for any $x\in V(G)$:
\begin{align}
	|Z_{x_1} \cap \mathcal{M}| + |Z_{x_2} \cap \mathcal{M}|
	&\geq \frac{(1-d^{-(\rho/3)^3})^2 (|Z_{x_1}|+|Z_{x_2}|)|V(\calH)|/8}{|V(\calH)|\Delta_1(\calH)/8}\nonumber\\
&\geq (1-d^{-(\rho/3)^3})^2 (1-d^{-\rho/3})\deg_G(x)\nonumber\\
&\geq \deg_G(x) - \eps'n/2, \label{eq:Zx1Zx2}
\end{align}
where inequality~\eqref{eq:Zx1Zx2} holds for sufficiently large $n$ 
because $\deg_G(x)=\Theta(n)$ and $d=\Theta(n^2)$. Then, we verified
that~\ref{item:firstpacking-almostperfect} holds.
This finishes the proof of the lemma.
\end{proof}

\section{Breaking cycles and connecting paths} \label{section:breakingbad}
\label{sec:connecting}

For a real number $\eps \geq 0$, a collection $\calP$ of paths in $G$ is an 
\emph{$\eps$-almost separating path system} if there exists a set $E' \subseteq E(G)$ 
such that $\calP$ separates every edge in~$E'$ from all other edges in $G$ 
and~$\Delta(G-E') \leq \eps n$.
Note that such $\calP$ strongly separates~$E'$. 

A 2-matching that has no cycle is called \emph{acyclic}. A collection
$\calQ$ of 2-matchings is \emph{acyclic} if each 2-matching in $\calQ$
is acyclic.
The next lemma shows that a collection of 2-matchings as in the output
of \Cref{lemma:firstpacking} (that is, a separator, as in \Cref{definition:separator}) can be converted into an acyclic
$2$-matching with only a very small loss in its properties.

\begin{lemma}\label{lem:acyclic}
	Let $1/n \ll \delta, L, \beta, \eps$.
	If $G$ is an $n$-vertex graph and there exists a $(\delta, L,\beta,\eps)$-separator for $G$,
  then there also exists an acyclic $(\delta, L,\beta,5\eps)$-separator for $G$.
\end{lemma}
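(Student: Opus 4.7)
The plan is to break each cycle of each $Q_i \in \calQ$ by removing a single random edge, and then clean up by discarding the edges whose coverage sets became strictly smaller, so as to restore strong separation. Concretely, for each cycle $C$ in each $Q_i$, I would pick an edge $e_C \in C$ uniformly at random, independently across cycles, and set $Q_i^\ast = Q_i \setminus \{e_C : C \text{ a cycle in } Q_i\}$. Each $Q_i^\ast$ is then acyclic with $V(Q_i^\ast) = V(Q_i)$ (every cycle of length at least $1/\eps \geq 3$ loses an edge but keeps all of its vertices), so $(\delta, L)$-robust-connectedness of $\calQ^\ast := \{Q_1^\ast, \dotsc, Q_t^\ast\}$ is preserved automatically.

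Most separator properties of $\calQ^\ast$ then follow from standard concentration, because each cycle has length $\ell \geq 1/\eps$ and so any specific edge is chosen with probability $1/\ell \leq \eps$. This gives at most $2\eps n$ paths per $2$-matching (original paths plus broken cycles); for each vertex $v$, at most $O(\eps n)$ new path-endpoint incidences in total (each cycle containing $v$ creates a new endpoint at $v$ with probability $2/\ell \leq 2\eps$, summed over $O(\beta n)$ $2$-matchings); and at most $O(\eps n)$ extra edges missing from $E(\calQ^\ast)$ at $v$, since an edge disappears from $E(\calQ^\ast)$ only if it is picked in every $2$-matching where it lies on a cycle, an event of probability at most $\eps$.

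The main obstacle is strong separation of $E(\calQ^\ast)$. The random cycle-breaking may shrink the coverage set of an edge $e$ from $A_e := \{i : e \in Q_i\}$ to $A_e^\ast = A_e \setminus R_e$, where $R_e$ collects the $2$-matching indices in which $e$ was picked for removal; then, if $A_e \setminus A_f = \{i\}$ for some other edge $f$ and $e$ was chosen in a cycle in $Q_i$, we have $A_e^\ast \subseteq A_f = A_f^\ast$, violating strong separation. I would resolve this by further removing the set $B = \{e : R_e \neq \emptyset\}$ of edges with shrunken coverage, defining $Q_i' = Q_i^\ast \setminus B$ and $\calQ' = \{Q_1', \dotsc, Q_t'\}$. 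Every surviving edge $e \in E(\calQ')$ then has coverage set exactly $A_e$, so strong separation is inherited directly from $\calQ$ restricted to $E(\calQ') \subseteq E(\calQ)$; each $Q_i'$ remains acyclic and $|\calQ'| = \beta n$.

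The hardest part will be to show that deleting $B$ keeps every separator parameter within the $5\eps n$ budget. For each vertex $v$, since $|A_e| \leq 3$ and each $2$-matching removes $e$ with probability at most $\eps$ (when $e$ lies on a cycle there), we have $\Pr[e \in B] \leq 3\eps$; summing over the $\leq \deg_G(v) \leq n$ edges incident to $v$ and applying Chernoff gives $|\{e \ni v : e \in B\}| \leq 4\eps n$ with high probability. Combined with the $\eps n$ slack inherited from $\calQ$ via (Q5), this yields $\Delta(G - E(\calQ')) \leq 5\eps n$. Analogous but more delicate concentration arguments for the endpoint count (Q3) and the path count (Q1) --- carefully handling the event that exactly one of the two edges at $v$ in a $2$-matching gets deleted, and exploiting that $\beta \leq 1$ in our regime --- complete the verification.
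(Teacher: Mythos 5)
Your first step is exactly the paper's: delete one uniformly random edge from each cycle and control the per-vertex damage by concentration over the independent per-cycle choices. Where you diverge is the extra cleanup set $B$: the paper deletes only the picked cycle edges and keeps every surviving edge in all of its original $2$-matchings, asserting that \ref{item:firstpacking-separating} persists, whereas you also delete every edge whose coverage set shrank so that all surviving edges retain their full coverage sets and strong separation is inherited verbatim. The worry motivating this cleanup is legitimate (an edge whose coverage set loses an index could in principle become comparable with another edge's coverage set), but the cleanup is precisely where your proof has a genuine gap: the verification you defer to ``analogous but more delicate concentration arguments'' for \ref{item:firstpacking-connecting} and \ref{item:firstpacking-endpoints} is not routine, and for \ref{item:firstpacking-endpoints} it fails with the stated constant. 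Each edge $e\in B$ incident to a vertex $v$ is deleted from \emph{every} $2$-matching containing it (up to three, by \ref{item:firstpacking-sizes}), and each such deletion can create a new path-endpoint at $v$. In the worst case (every edge at $v$ lying on a long cycle in each of its three $2$-matchings) the expected number of new endpoint incidences at $v$ is about $\sum_{e\ni v}|A_e|\Pr[e\in B]\approx 3\cdot 3\eps\cdot(2\beta n/3)=6\eps\beta n$, which together with the original $\eps n$ already exceeds $5\eps n$ once $\beta>2/3$ --- and $\beta$ close to $1$ is exactly the regime needed for the application to $K_n$. So as written your argument yields a $(\delta,L,\beta,C\eps)$-separator for some absolute constant $C$ larger than $5$, not the stated $5\eps$; you would need either a sharper accounting, a different treatment of the shrunken-coverage edges, or a relaxed constant in the statement.

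Two smaller points. First, your appeal to Chernoff for $\deg_B(v)\le 4\eps n$ is not justified as stated: the indicators $\{e\in B\}$ for the two edges at $v$ lying on the same cycle are not independent (the cycle makes a single choice). The clean route is the one the paper uses for the picked edges: McDiarmid's inequality over the independent per-cycle choices, with bounded differences, followed by a union bound over vertices. Second, note that also your expectation bound can be tightened: $\expectation[\deg_B(v)]\le\eps\sum_i\deg_{Q_i}(v)\le 2\eps\beta n$, which is what makes \ref{item:firstpacking-almostperfect} comfortable; but this tightening does not rescue the endpoint count above, since there the factor $|A_e|\le 3$ reappears.
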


\begin{proof}
	Let $\mathcal{Q}$ be a $(\delta, L,\beta,\eps)$-separator for $G$.
  We will remove one edge from each cycle in the $2$-matchings of
  $\calQ$. First note that, after removing one edge from each cycle of
  an $\eps$-compact $2$-matching $Q$, we obtain an acyclic
  $2\eps$-compact $2$-matching, because the maximum number of cycles
  in $Q$ is $\eps n$. Since $Q$ is $(\delta,L)$-robustly-connected,
  then it remains so after the removal of such edges. Thus, the
  collection of $2$-matchings obtained after the removal of these
  edges from $\calQ$ satifies~\ref{item:firstpacking-connecting} with
  $2\eps$ in the place of $\eps$.  Note that such collection also
  satifies~\ref{item:firstpacking-separating}
  and~\ref{item:firstpacking-sizes}.  Moreover, if we remove from the
  $2$-matchings at most $4\eps n$ edges incident to each vertex of
  $G$, then~\ref{item:firstpacking-endpoints} will hold with $5\eps n$
  in the place of $\eps n$.  Moreover, the degree of $u$ in $G-E_i$
  will increase by at most $4\eps n$, which implies
  condition~\ref{item:firstpacking-almostperfect} with $5\eps$ in
  the place of $\eps n$.
  
  In view of the previous discussion, our goal is to prove that there
  is a set of edges with at most $4\eps n$ edges incident to each
  vertex whose removal makes $\calQ$ acyclic.
  Let $C_1,\ldots,C_T$ be the cycles in 2-matchings of $\calQ$ and
  note that $T \leq \eps \beta n^2$. 
  For $1 \leq i \leq T$, let $\calX_i$ be the edges of the cycle $C_i$, 
  and let $X_i$ be an edge chosen uniformly at random from $\calX_i$.
  
  Let $S$ be the edge set $\{X_1,\ldots,X_T\}$ and, for each vertex
  $u$, let $f^u(X_1,\ldots,X_T)$ be the degree of $u$ in $G[S]$.  Note
  that, since $u$ is in at most one cycle $C_i$ of a $2$-matching of
  $\calQ$ and each cycle has length at least $1/\eps$, the edge $X_i$
  was chosen as one of the two edges incident to $u$ with probability
  at most $2\eps$. Then, because the number of $2$-matchings is at most
  $\beta n$, we have that $\expectation[f^u(X_1,\ldots,X_T)] \leq
  2\eps\beta n$.
  
  Let $(x_1,\ldots,x_T)$ and $(x'_1,\ldots,x'_T)$ be in $\calX_1
  \times \dotsb \times \calX_T$, differing in exactly one coordinate,
  that is, $x_j = x'_j$ for every $j \in \{1,\ldots,T\}$ with $j \neq
  i$.  Note that $f$ is such that $|f^u(x_1,\ldots,x_T) -
  f^u(x'_1,\ldots,x'_T)| \leq 1$.  In fact, $f^u(x_1,\ldots,x_T) =
  f^u(x'_1,\ldots,x'_T)$ if $u$ is not in $C_i$.  So we can set $c_i =
  1$ if $u$ is in $C_i$ and $c_i = 0$ otherwise.  As $u$ is in at most
  $\beta n$ of the $T$ cycles, we have that $\sum_j c_j^2 \leq \beta
  n$.  By using McDiarmid's inequality~(\Cref{lemma:mcdiarmid}), we
  obtain
  \[
    \Pr(f^u(X_1,\ldots,X_T) \geq 4\eps\beta n) \ \leq \
    \exp\left(-\frac{8\eps^2\beta^2n^2}{\beta n}\right) \ \leq \
    \exp\big(-8\eps^2\beta n\big).
  \]
  Thus, by the union bound, the probability that the maximum degree in
  $G[S]$ is less than~$4\eps\beta n$ is at least $1 - n\cdot
  \exp\big(-8\eps^2\beta n\big)$, which, for large enough $n$, is
  positive.  This means there is a choice of edges whose removal from
  the cycles in the 2-matchings of $\calQ$ makes~$\calQ$ acyclic and
  satisfying~\ref{item:firstpacking-connecting}--\ref{item:firstpacking-almostperfect},
  with $5\eps$ in the place of $\eps$.
\end{proof}

The next lemma is the main result of this section, 
and will be used in Section~\ref{sec:mainresult} to prove our main result. 

\begin{lemma} \label{lemma:connectingpaths} For each $\eps$, $\delta$,
  and $L$, there exist $\eps'$ and $n_0$ such that the following holds
  for every $n$-vertex graph $G$ with~$n \geq n_0$ and every $\beta
  \in (0,1)$.  If $\calQ$ is a $(\delta, L,\beta,\eps')$-separator
  for~$G$, then there exists an $\eps$-almost separating path system
  with $\beta n$ paths.
\end{lemma}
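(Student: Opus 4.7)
The plan is to first apply \Cref{lem:acyclic} to reduce, at the cost of worsening the compactness parameter by a factor of $5$, to the case where each $Q_i \in \calQ$ is acyclic. After relabeling $\eps'$, each $Q_i$ is then a vertex-disjoint union of at most $\eps' n$ paths, and a single path $Q'_i$ can be formed from $Q_i$ by joining its components with $p_i - 1 < \eps' n$ short ``connecting paths'' whose internal vertices lie in $V(G) \setminus V(Q_i)$. Such paths are abundant, because $\calQ$ is $(\delta, L)$-robustly-connected.

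More concretely, for each $i$ I would process the path components of $Q_i$ sequentially; for each pair of consecutive endpoints $(x, y)$, I would pick uniformly at random a path from $x$ to $y$ having some number $\ell \leq L$ of internal vertices, all in $V(G) \setminus V(Q_i)$ and disjoint from the internal vertices of the previously chosen connecting paths for $Q_i$. Since at each step the number of forbidden internal vertices is $O(\eps' L n)$ while there are at least $\delta n^\ell$ candidate paths to begin with, taking $\eps'$ small enough ensures that at least $(\delta/2) n^\ell$ choices remain. The resulting $Q'_i$ has maximum degree at most $2$, is connected, and has no cycles, and hence is a single path as required.

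The delicate point is to verify that $\calP := \{Q'_i : 1 \leq i \leq \beta n\}$ is $\eps$-almost separating. For each $i$, let $F_i \subseteq E(G)$ denote the set of edges belonging to connecting paths added in the formation of $Q'_i$ but not to $Q_i$. For each $f \in E(G)$, write $A_f = \{i : f \in E(Q_i)\}$ and $B_f = \{i : f \in F_i\}$, so that the fingerprint of $f$ in $\calP$ is $A_f \cup B_f$. I would declare $E' \subseteq E(\calQ)$ to be the set of edges $e$ with $B_e = \emptyset$ for which no $f \neq e$ satisfies $A_e \subseteq A_f \cup B_f$; by definition, $\calP$ then separates every $e \in E'$ from all other edges of $G$.

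It remains to show that $\Delta(G - E') \leq \eps n$. Edges incident to a fixed vertex $v$ that lie outside $E'$ split into three groups: those not in $E(\calQ)$ (bounded by $\eps' n$ via \ref{item:firstpacking-almostperfect}), those in $E(\calQ) \cap \bigcup_i F_i$, and those $e \in E(\calQ) \setminus \bigcup_i F_i$ for which some witness $f$ satisfies $A_e \subseteq A_f \cup B_f$. Since $\calQ$ strongly separates $E(\calQ)$, any such witness must have $A_e \cap B_f \neq \emptyset$, so $f$ must be an edge of some connecting path of $Q_j$ with $j \in A_e$. The main obstacle will be bounding this third group: I plan to apply McDiarmid's inequality (or the Lovász Local Lemma) to the random choice of connecting paths, in order to show that, with positive probability and uniformly over all $v$ and $f$, both the number of connecting paths through $v$ and the size of $B_f$ are tightly controlled. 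Combined with the structural observation above and the sparsity $|F_i| \leq O(\eps' L n)$, this should yield the required bound after choosing $\eps'$ sufficiently small in terms of $\eps$, $\delta$, and $L$.
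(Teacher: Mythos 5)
Your route is genuinely different from the paper's. After the common first step (\Cref{lem:acyclic}), the paper proceeds \emph{deterministically}: it converts $Q_1,\dotsc,Q_t$ one at a time while maintaining the invariants \ref{inv-sep}--\ref{inv-Delta}, and it protects the degree bound by only using \emph{available} connecting edges (never an edge already used as a connecting edge, and never an edge whose damage set $E^f$ has a ``tight'' endpoint), with Claim~\ref{clm:atmost3} guaranteeing that each new connecting edge un-separates at most four edges of the current $Q_i$. You instead choose all connecting paths at random and defer all damage control to a global concentration argument over the fingerprints $A_f\cup B_f$. Your structural accounting is correct as far as it goes: since $\calQ$ strongly separates $E(\calQ)$, any witness $f$ destroying an edge $e$ with $B_e=\emptyset$ must satisfy $A_e\cap B_f\neq\emptyset$, i.e.\ $f$ is a connecting edge of some $P_j$ with $e\in E(Q_j)$, and a short computation (using that each junction path is chosen among at least $\delta n^\ell/2$ candidates, of which only $O(n^{\ell-1})$ meet any fixed vertex and $O(n^{\ell-2})$ contain any fixed non-endpoint edge) shows the \emph{expected} number of edges at a fixed vertex $v$ that are lost in your three groups is $O(\eps' L n/\delta)$, which is fine.

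The genuine gap is the concentration step, and the two tools you name do not deliver it. The per-vertex loss at $v$ depends on essentially \emph{all} $\Theta(\eps'\beta n^2)$ random junction choices (any connecting path may route through $v$ or through an edge relevant to $v$), each with bounded-difference constant $\Theta(L)$; McDiarmid then gives a failure probability of order $\exp\bigl(-\Theta(\eps^2/(\eps' L^2))\bigr)$, a constant, which cannot survive a union bound over the $n$ vertices --- note that in \Cref{lem:acyclic} McDiarmid works only because the variables affecting a given vertex number $O(n)$, which is not the case here. The Local Lemma fares no better, since the $n$ bad events all depend on a common, almost complete set of random choices, so there is no usable dependency structure. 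What would actually be needed is a sequential argument exploiting that each junction choice hits a fixed vertex (or a fixed $O(n)$-size edge set) with \emph{conditional} probability $O(L/(\delta n))$ given all previous choices --- a stochastic domination or Freedman/Azuma supermartingale argument, complicated further for your ``third group'' by the fact that a kill of $e$ can be completed either by the choice for $P_j$ or by a later choice for $P_a$ with $a\in A_e$. This is doable but is the real content of the step, not a routine application; the alternative is to reinstate the paper's deterministic safeguards (forbid tight vertices and reused connecting edges when selecting the good path), at which point you are essentially reproducing the paper's proof.
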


\begin{proof} 
  Let $t = \beta n$.  We can apply Lemma~\ref{lem:acyclic} to transform $\calQ$ 
  into an acyclic collection of $2$-matchings, adjusting the value of $\eps'$ accordingly. 
  Let $\calQ = \{Q_1, \dotsc, Q_t\}$. 
  
  We will describe a sequence $\calC_0,\ldots,\calC_t$ of collections of acyclic 2-matchings in~$G$ 
  and sets $E_0,\ldots,E_t$ of edges of~$G$, the idea being that $\mathcal{C}_i$ strongly separates $E_i$, 
  and that each $\mathcal{C}_i$ will be obtained from $\mathcal{C}_{i-1}$ by replacing $Q_i$ with a path $P_i$. 
  Then $\mathcal{C}_t$ will be the desired path system.
  
  For each vertex $u$ and $0 \leq i \leq t$, let~$d_i(u)$ be the total number of paths 
  in the 2-matchings in~$Q_{i+1},\ldots,Q_t$ that have~$u$ as an endpoint. 
  We will make sure the following invariants on $\calC_i$ and~$E_i$ hold for each $0 \leq i \leq t$: 
  \begin{enumerate}[\upshape{\textbf{(I\arabic*)}},labelindent=0pt,labelwidth=\widthof{\ref{inv-Delta}},leftmargin=!]
  	\item each $\calC_i$ separates every edge in $E_i$ from all other edges of~$G$;\label{inv-sep}
  	\item edges in more than three of the 2-matchings in~$\calC_i$ are 
  	in $E(\calC_i) \setminus E(\calC_0)$; and\label{inv-3x}
  	\item the degree of each vertex $u$ in $G-E_i$ is at most $\eps n$ if $d_i(u) = 0$ 
  	and at most $\sqrt{\eps'}n - 2d_i(u)$ if $d_i(u) > 0$.\label{inv-Delta}
  \end{enumerate}
  
  Let $E_0 = E(\calQ)$ and $\calC_0 = \calQ$.  
  Note that $\calC_0$ and $E_0$ satisfy the three invariants.
  We will define $\calC_i = (\calC_{i-1} \setminus \{Q_i\}) \cup \{P_i\}$ 
  for $i=1,\ldots,t$, where~$P_i$ is a path that contains all paths in $Q_i$.  
  Therefore, if invariants~\ref{inv-sep} and~\ref{inv-Delta} hold for $i = t$ 
  and $\eps' \leq \eps^2$, then~$\calC_t$ will be an $\eps$-almost separating path 
  system with $t$ paths, and the proof of the lemma will be complete, as $t = \beta n$. 
  
  Suppose $i \geq 1$. To describe how we build $P_i$ from $Q_i$, we need some definitions. 
  Let~$f$ be an edge of~$G$ not in~$Q_i$ such that~$Q_i+f$ is a~2-matching. 
  Let~$E^f$ be $f$ plus the set of edges of $Q_i$ in~$E_{i-1}$ that are not 
  separated from $f$ by~$(\calC_{i-1} \setminus \{Q_i\}) \cup \{Q_i+f\}$. 

\begin{claim}\label{clm:atmost3}
	If $f$ is in at most three of the 2-matchings in $\calC_{i-1}$, then $|E^f| \leq 4$.
\end{claim}

\begin{proof}
	Suppose there are three edges $a$, $b$, and $c$ in $E(Q_i) \cap E_{i-1}$ 
	that are not separated from $f$ by~$(\calC_{i-1} \setminus \{Q_i\}) \cup \{Q_i+f\}$. 
	By invariant~\ref{inv-sep}, there are 2-matchings $Q_{a\bar{b}}$, $Q_{b\bar{c}}$, and $Q_{c\bar{a}}$ 
	in~$\calC_{i-1}$ such that $a \in E(Q_{a\bar{b}})$ but $b \not\in E(Q_{a\bar{b}})$, $b \in E(Q_{b\bar{c}})$ 
	but $c \not\in E(Q_{b\bar{c}})$, and~$c \in E(Q_{\bar{a}c})$ but $a \not\in E(Q_{\bar{a}c})$. 
	Clearly these three 2-matchings are distinct and are not $Q_i$, because $a$, $b$, and $c$ are in~$Q_i$. 
	So they are in $(\calC_{i-1} \setminus \{Q_i\}) \cup \{Q_i+f\}$ and they must contain $f$
	because $a$, $b$, and $c$ are not separated from $f$ in $(\calC_{i-1} \setminus \{Q_i\}) \cup \{Q_i+f\}$. 
	By the hypothesis of the claim, these are the only 2-matchings in~$\calC_{i-1}$ containing $f$. 
	Hence, repeating the argument for $a$, $b$, $c$ in the inverse order, 
	we deduce that either $a \in Q_{b\bar{c}}$,~$b \in Q_{\bar{a}c}$, and $c \in Q_{a\bar{b}}$, 
	or $a \not\in Q_{b\bar{c}}$, $b \not\in Q_{\bar{a}c}$, and $c \not\in Q_{a\bar{b}}$.
	
	Now, suppose there is a fourth edge $d$ in $E(Q_i) \cap E_{i-1}$ not 
	separated from~$f$ by the collection~$(\calC_{i-1} \setminus \{Q_i\}) \cup \{Q_i+f\}$. 
	Consider the former of the two cases above and, for clarity, rename the 
	three 2-matchings to $Q_{ab\bar{c}}$, $Q_{\bar{a}bc}$, and $Q_{a\bar{b}c}$. 
	Then $d$ must be in $Q_{\bar{a}bc}$, to be separated from $a$, 
	and $d$ must be in $Q_{a\bar{b}c}$, to be separated from $b$. 
	But now there is no way to separate $c$ from $d$, a contradiction. 
	The other case is analagous.  Indeed, for clarity, rename the three 2-matchings 
	to $Q_{\bar{a}b\bar{c}}$, $Q_{\bar{a}\bar{b}c}$, and $Q_{a\bar{b}\bar{c}}$. 
	Then $d$ must not be in $Q_{\bar{a}b\bar{c}}$, so that $b$ is separated from $d$, 
	and $d$ must not be in $Q_{a\bar{b}\bar{c}}$, so that $a$ is separated from $d$. 
	But now there is no way to separate $d$ from $c$, a contradiction.
\end{proof}
  
  A vertex $u$ is \emph{tight} if its degree in $G-E_{i-1}$ is more than $\eps n - 2$ 
  if $d_{i-1}(u) = 0$, or more than $\sqrt{\eps'}n - 2d_{i-1}(u) - 2$ if $d_{i-1}(u) > 0$.
  An edge~$f$ is \emph{available for $P_i$} if $f \not\in E(\calC_{i-1}) \setminus E(\calC_0)$
  and the extremes of the edges in $E^f$ are not tight.
  
  To transform $Q_i$ into $P_i$, we will proceed as follows. 
  Start with~$P'_i$ being one of the paths in $Q_i$ and let $Q'_i = Q_i \setminus \{P'_i\}$.
  While~$Q'_i$ is non-empty, let $P$ be one of the paths in~$Q'_i$.
  Call~$y$ one of the ends of~$P$ and $x$ one of the ends of $P'_i$.
  An $(x,y)$-path in~$G$ is \emph{good} if it has length at most $L$, 
  all of its edges are available and its inner vertices are not in~$V(P_i) \cup V(Q'_i)$.
  If a good $(x,y)$-path exists, we extend $P'_i$ by gluing $P'_i$ and~$P$; 
  we remove~$P$ from $Q'_i$, and repeat this process until $Q'_i$ is empty.
  When $Q'_i$ is empty, we let $P_i$ be $P'_i$. Recall that
  $\calC_i = (\calC_{i-1} \setminus \{Q_i\}) \cup \{P_i\}$, and we let $E_i$ 
  be~$E_{i-1}$ minus all edges contained in more than three 2-matchings of~$\calC_i$ 
  and all edges not separated by~$\calC_i$ from some other edge of~$G$.
  
  This process is well-defined if the required $(x,y)$-good path exists at every point in the construction.
  We will show that, indeed, assuming that the invariants hold, there is 
  always a good $(x,y)$-path to be chosen in the gluing process above. 
  Then, to complete the proof, we will prove that the invariants hold even 
  after $Q_i$ is modified by the choice of any good path.
  
  First, note that the number of vertices in $P'_i$ not in $Q_i$ is less than~$L \eps'n$.
  Indeed, each connecting path has at most $L$ inner vertices and $Q_i$ is $\eps'$-compact, 
  hence $Q_i$ has no more than~$\eps'n$ paths.
  Thus we use less than~$\eps'n$ 
  connecting paths to get to $P_i$.  If~$\eps' < \delta /(4L)$, then 
  the number of vertices in $P'_i$ not in $Q_i$ is less than~$\delta n / 4$.
  
  Second, let us consider the tight vertices. 
  We start by arguing that $x$ is not tight. 
  This happens because $d_i(x) = d_{i-1}(x) - 1$ and, by invariant~\ref{inv-Delta}, 
  the degree of $x$ in~$G-E_{i-1}$ is at most $\sqrt{\eps'}n - 2d_{i-1}(x) = \sqrt{\eps'}n - 2d_i(x) - 2$. 
  For the same reasons, $y$ is not tight. 
  Now, note that $E_{i} \setminus E_{i-1} \subseteq \bigcup \{E^f : f \in E(P_i) \setminus E(Q_i)\}$.  
  Hence, $|E_{i} \setminus E_{i-1}| \leq 4L\eps'n$ by Claim~\ref{clm:atmost3}
  and because $Q_i$ consists of at most $\eps'n$ paths. 
  This, $\Delta(G - E(\calQ)) \leq \eps' n$, and $d_i(G) \leq \eps' n$ imply that the maximum number of tight vertices is 
  at most $(\eps'n + 4L\eps'\beta n)/(\sqrt{\eps'}-2\eps') = \eps'(1 + 4L\beta)n/(\sqrt{\eps'}-2\eps')$.
  As long as~$2\eps' < \sqrt{\eps'}/2$, that is, $\eps' < 1/16$, 
  we have that this number is less than $2\sqrt{\eps'}(1 + 4L\beta)n$.
  If additionally $\eps' < (\delta/(8(1+4L\beta)))^2$, we have that
  the number of tight vertices is less than~$2\sqrt{\eps'}(1+4L\beta)n < \delta n/4$. 
  
  Third, $|E(\calC_{i-1}) \setminus E(\calC_0)| < 4L\eps'n(i-1) < 4L\eps'\beta n^2$
  because $i \leq \beta n$.  Hence, by invariant~\ref{inv-3x}, at most 
  $4L\eps'\beta n^2$ edges are used more than three times by $\calC_{i-1}$.  
  Let $e \in E(\calC_{i-1}) \setminus E(\calC_0)$. 
  Because~$Q_i$ is~$(\delta,L)$-robustly-connected, there exist $\ell \leq L$ 
  and $\delta n^\ell$ $(x,y)$-paths in~$G$, each with~$\ell$ internal vertices, 
  all in $V(G) \setminus V(Q_i)$.
  If $e$ is not incident to $x$ or $y$, then the number of $(x,y)$-paths 
  in~$G$ with~$\ell$ internal vertices and containing $e$ is at most $n^{\ell-2}$. 
  Hence, the number of $(x,y)$-paths in~$G$ with~$\ell$ internal vertices, 
  containing an edge in $E(\calC_{i-1}) \setminus E(\calC_0)$ not incident 
  to~$x$ or~$y$, is less than $4L\eps'\beta n^\ell$. 
  If $e$ is incident to~$x$ or~$y$, then the number of $(x,y)$-paths in~$G$ 
  with~$\ell$ internal vertices and containing $e$ is at most $n^{\ell-1}$.
  But, there are less than $\sqrt{\eps'} n$ edges incident to~$x$ and less 
  than $\sqrt{\eps'} n$ edges incident to~$y$ contained in more than three 
  2-matchings in $\calC_{i-1}$, by invariant~\ref{inv-Delta}.  
  Thus, the number of $(x,y)$-paths in~$G$ of length~$\ell$ containing an edge 
  in $E(\calC_{i-1}) \setminus E(\calC_0)$ incident to $x$ or $y$ is less 
  than $2\sqrt{\eps'}n^\ell$.  We can choose $\eps'$ small enough so that 
  $4L\eps'\beta+2\sqrt{\eps'} < \delta/4$, and thus at most $\delta n^\ell/4$ 
  $(x,y)$-paths of length $\ell$ contain some edge of $E(\calC_{i-1}) \setminus E(\calC_0)$.
  
  Summarising, we have concluded that, for $\eps'$ small enough, the number of vertices 
  in~$P'_i$ not in $Q_i$ is less than $\delta n/4 \leq \delta n^\ell/4$, the number of 
  tight vertices is also less than $\delta n/4 \leq \delta n^\ell/4$, and the number 
  of $(x,y)$-paths containing some edge in~$E(\calC_{i-1}) \setminus E(\calC_0)$ 
  is less than $\delta n^\ell/4$.  This means that at least $\delta n^\ell/4$ of the 
  $\delta n^\ell$ $(x,y)$-paths in~$G$, each with~$\ell$ internal vertices, 
  all in $V(G) \setminus V(Q_i)$, are good.  As long as $n_0$ is such that 
  $\delta n_0^\ell/4 \geq \delta n_0 / 4 \geq 1$, there is a good $(x,y)$-path. 
  
  Now let us verify the invariants.
  By the definition of $E_i$, invariant~\ref{inv-sep} holds for $i$ 
  because $\calC_i$ separates every edge in~$E_i$ from all other edges of~$G$.
  Invariant~\ref{inv-3x} holds because edges in more than three 2-matchings in $\calC_i$ lie 
  in used connecting paths, that is, lie in $E(P_j) \setminus E(Q_j)$ for some $j$ with $1 \leq j \leq i$.
  For invariant~\ref{inv-Delta}, observe that $E_{i} \setminus E_{i-1} \subseteq E(P_i)$, 
  so the degree of $v$ from $G-E_{i-1}$ to $G-E_i$ decreases only for untight vertices, 
  and by at most two.  As the degree of an untight vertex $u$ in $G-E_{i-1}$ is at 
  most~$\eps n - 2$ if $d_{i-1}(u) = 0$ and at most~$\sqrt{\eps'}n - 2d_{i-1}(u) - 2$ 
  if $d_{i-1}(u) > 0$, every vertex~$u$ in~$G-E_i$ has degree at most~$\eps n$ if~$d_i(u) = 0$
  and at most $\sqrt{\eps'}n - 2d_i(u)$ if $d_i(u) > 0$, also because $d_i(u) \leq d_{i-1}(u)$.
  So invariant~\ref{inv-Delta} holds.
\end{proof}

\section{Separating the last few edges} \label{sec:separating}

In this section we deal with a subgraph $H$ of $G$, of small maximum degree, 
whose edges are not separated by the path family obtained in the previous sections.
This is done in~\Cref{lemma:lastfewpaths} but first we need some auxiliary results.
The first step of the proof is to find a family of matchings which separates the edges of $H$.

\begin{lemma} \label{lemma:separatingmatchings}
Let $\Delta \geq 0$ and let $H$ be an $n$-vertex
graph with $\Delta(H) \leq \Delta$. Then there is a collection
of $t \leq 300 \sqrt{\Delta n}$ matchings $M_1, \dotsc, M_t \subseteq H$ such that
\begin{enumerate}[\upshape{\textbf{(M\arabic*)}},labelindent=0pt,labelwidth=\widthof{\ref{item:Mlast}},leftmargin=!]
	\item each edge in $H$ belongs to exactly two matchings $M_i, M_j$; and\label{item:Mtwo}
	\item for each $1 \leq i < j \leq t$, the matchings $M_i, M_j$ have
	at most one edge in common.\label{item:Mlast}
\end{enumerate}
\end{lemma}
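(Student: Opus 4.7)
The plan is to obtain the matchings from a random pair-assignment on the edges, controlled by the asymmetric Lovász Local Lemma. Set $t = \lceil 300\sqrt{\Delta n}\rceil$ (the case $\Delta = 0$ or $|E(H)| = 0$ being trivial). For each edge $e \in E(H)$, independently choose a uniformly random $2$-subset $P_e \subseteq [t]$, and define $M_i := \{e \in E(H) : i \in P_e\}$ for each $i \in [t]$. Property~(M1) then holds automatically, since every edge belongs to exactly the two matchings indexed by the elements of $P_e$. Property~(M2) together with the requirement that each $M_i$ be a matching translate into avoiding two types of bad events: type $A_{e,f}$, for each pair of distinct edges $e,f \in E(H)$, which is the event $\{P_e = P_f\}$; and type $B_{v,e,f}$, for each vertex $v$ and each pair of distinct edges $e,f$ incident to $v$, which is the event $\{P_e \cap P_f \neq \emptyset\}$. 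A direct calculation gives $\Pr(A_{e,f}) \leq 3/t^2$ and $\Pr(B_{v,e,f}) \leq 5/t$.

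Each bad event depends only on the random choices $P_e$, $P_f$ for the two edges involved, so two bad events are mutually independent unless they share an edge. A given edge $e$ appears in at most $|E(H)|-1 \leq n\Delta/2$ type-$A$ events and in at most $2(\Delta-1)$ type-$B$ events; hence the neighbourhood of any bad event in the LLL dependency graph contains at most $n\Delta$ type-$A$ events and at most $4\Delta$ type-$B$ events.

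I would then apply the asymmetric Lovász Local Lemma with weights $x_A := 4/t^2$ attached to each type-$A$ event and $x_B := 10/t$ attached to each type-$B$ event. Using $1-x \geq \exp(-2x)$ for small positive $x$, together with $|E(H)| \leq n\Delta/2$ and $\Delta \leq n-1$, each of the required local inequalities $\Pr(A_{e,f}) \leq x_A \prod_Y (1-x_Y)$ and $\Pr(B_{v,e,f}) \leq x_B \prod_Y (1-x_Y)$ reduces to checking that the factor $\exp\bigl(-O(n\Delta)/t^2 - O(\Delta)/t\bigr)$ remains bounded below by a fixed constant (concretely, something like $3/4$ for the type-$A$ side and $1/2$ for the type-$B$ side). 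With $t = \lceil 300\sqrt{\Delta n}\rceil$ the exponent is an absolute constant at most $\ln(4/3)$, and both inequalities follow. Thus the asymmetric LLL yields a realization in which no bad event occurs.

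Conditional on such a realization, (M1) is automatic, (M2) is ensured by the avoidance of type-$A$ events, and each $M_i$ is a matching because avoiding type-$B$ events means no two edges incident to a common vertex share any label in their pairs. The only delicate point, and therefore the main obstacle, is balancing the two weights in the asymmetric LLL so that the local conditions for both event types hold simultaneously with the prescribed quantitative constant $300$; once the weights are fixed the verifications are routine.
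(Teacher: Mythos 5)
Your proposal is correct and is essentially the paper's own proof: the same random assignment of a uniform $2$-subset of labels to each edge, the same two families of bad events (equal label pairs for distinct edges, intersecting label pairs for edges sharing a vertex) with dependency degrees $\Delta n$ and $4\Delta$, and the asymmetric Lov\'asz Local Lemma with one weight per event type; only the cosmetic choices differ (the paper takes $D+1$ labels with $D = 256\sqrt{\Delta n}$ and weights $1/(\Delta n)$, $1/(4\Delta)$). One trivial fix: take $t = \lfloor 300\sqrt{\Delta n}\rfloor$ rather than the ceiling so that $t \leq 300\sqrt{\Delta n}$ as the statement requires; your constants still leave enough room.
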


We also need the asymmetric version of the Lovász Local Lemma
(cf.~\cite[Theorem~1.1]{Spencer1977}).

\begin{theorem}[Asymmetric Lovász Local Lemma] \label{theorem:ALLL}
  Let $\mathcal{E} = \{A_1, \dotsc, A_n\}$ be a collection of events
  such that each $A_i$ is mutually independent of $\mathcal{E} -
  (\mathcal{D}_i \cup A_i)$, for some $\mathcal{D}_i \subseteq
  \mathcal{E}$.  Let $0 < x_1, \dotsc, x_n < 1$ be real numbers such
  that, for each $i \in \{1, \dotsc, n\}$,
  \begin{equation}
    \probability[A_i] \leq x_i \prod_{A_j \in \mathcal{D}_i}(1 - x_j).
    \label{equation:lll}
  \end{equation}
  Then $\probability\left( \bigcap_{i=1}^n \overline{A_i} \right) \geq
  \prod_{i=1}^n (1 - x_i) > 0$.
\end{theorem}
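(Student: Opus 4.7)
The plan is to obtain the matchings from a random labeling of edges combined with the asymmetric Lovász Local Lemma (\Cref{theorem:ALLL}). Set $t = \lceil 300\sqrt{\Delta n}\,\rceil$ (if $\Delta n$ is so small that $\lceil 300\sqrt{\Delta n}\,\rceil \geq 2|E(H)|$, I would simply give each edge its own disjoint pair of labels and be done). For each edge $e \in E(H)$, assign an independent uniformly random pair $\phi(e) \in \binom{[t]}{2}$ and let $M_i = \{e \in E(H) : i \in \phi(e)\}$. Property (M1) then holds automatically since $|\phi(e)| = 2$. Property (M2) holds iff $\phi$ is injective, while the condition that every $M_i$ is itself a matching amounts to no two edges at a common vertex having overlapping label pairs.

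These requirements give rise to two families of bad events. For each vertex $v$ and each pair of distinct edges $e_1, e_2$ incident to $v$, let $A_{v,e_1,e_2}$ be the event $\phi(e_1) \cap \phi(e_2) \neq \emptyset$; a direct calculation gives $\Pr[A_{v,e_1,e_2}] \leq 4/t$. For each pair of distinct edges $e_1, e_2 \in E(H)$, let $B_{e_1,e_2}$ be the event $\phi(e_1) = \phi(e_2)$, with $\Pr[B_{e_1,e_2}] = 2/(t(t-1))$. Each of these events depends only on the labels of the (at most) two edges in its subscript, so its dependency neighbourhood consists of those other bad events that share at least one such edge. A routine case analysis then shows each bad event is adjacent to at most $4\Delta$ Type A events and at most $\Delta n$ Type B events.

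I would apply \Cref{theorem:ALLL} with weights $x_A = 8/t$ on every Type A event and $x_B = 4/t^2$ on every Type B event. The key quantity to control is the product
\[
(1 - x_A)^{4\Delta}(1 - x_B)^{\Delta n} \geq \exp\!\left(-\frac{64\Delta}{t} - \frac{8\Delta n}{t^2}\right),
\]
where I used $1-x \geq e^{-2x}$ for $x \leq 1/2$. Since $\Delta \leq n$ and $t \geq 300\sqrt{\Delta n}$, we get $64\Delta/t \leq 64/300$ and $8\Delta n/t^2 \leq 8/300^2$, so the product exceeds $0.8$. The ratios $p_A/x_A = 1/2$ and $p_B/x_B = t/(2(t-1)) \leq 0.51$ are both comfortably smaller than this, so both LLL inequalities hold and the lemma produces a labeling $\phi$ avoiding every bad event, yielding the desired collection of matchings.

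The main technical tension is that Type B events have individually tiny probability $\Theta(1/t^2)$ but each edge lies in $\Theta(\Delta n)$ of them, so the Type B weight $x_B$ must be of order $1/t^2 = 1/(\Delta n)$ in order to keep $(1-x_B)^{\Delta n}$ bounded below by a constant; meanwhile $x_A$ must stay large enough to absorb $p_A = \Theta(1/t)$ while still keeping $(1-x_A)^{4\Delta}$ bounded below. These constraints are jointly satisfiable precisely because $t^2 = \Theta(\Delta n)$ makes both exponents $O(1)$; the numerical constant $300$ is merely a comfortable choice leaving plenty of slack in each inequality.
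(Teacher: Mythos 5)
Your proposal does not prove the statement in question. The statement is the Asymmetric Lov\'asz Local Lemma (\Cref{theorem:ALLL}) itself, which the paper quotes from the literature rather than proving; your argument instead proves \Cref{lemma:separatingmatchings} (the existence of the matchings $M_1,\dotsc,M_t$) \emph{by applying} \Cref{theorem:ALLL} as a black box. For the assigned statement this is circular: you assume exactly the criterion you were asked to establish. A proof of \Cref{theorem:ALLL} has to argue about the events themselves; the standard route is to show, by induction on $|S|$, that $\probability[A_i \mid \bigcap_{j \in S}\overline{A_j}] \leq x_i$ for every $i$ with $A_i \notin S$, splitting $S$ into the events lying in $\mathcal{D}_i$ and the rest, using the mutual independence of $A_i$ from $\mathcal{E} - (\mathcal{D}_i \cup A_i)$ to bound the numerator by $\probability[A_i]$ and hypothesis \eqref{equation:lll} together with the induction hypothesis to lower-bound the denominator by $\prod_{A_j \in \mathcal{D}_i}(1-x_j)$, and then to multiply the resulting conditional probabilities to obtain $\probability\bigl[\bigcap_{i=1}^n \overline{A_i}\bigr] \geq \prod_{i=1}^n (1 - x_i) > 0$. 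None of this appears in your write-up.

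That said, read as a proof of \Cref{lemma:separatingmatchings}, your argument is essentially the paper's own: there too each edge receives a uniformly random pair of labels from a set of size $D+1$ with $D = 256\sqrt{\Delta n}$, the same two families of bad events are introduced (equal label pairs for any two edges, intersecting label pairs for adjacent edges), their probabilities are bounded by $2/D^2$ and $4/D$, the dependency degrees are $\Delta n$ and $4\Delta$, and the weights $1/(\Delta n)$ and $1/(4\Delta)$ are checked against \eqref{equation:lll}. Your constants ($t = \lceil 300\sqrt{\Delta n}\,\rceil$, $x_A = 8/t$, $x_B = 4/t^2$) give a harmless variant and your verification of the two inequalities is sound, but it answers a different question from the one posed.
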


\begin{proof}[Proof of \cref{lemma:separatingmatchings}]
  Let $D = 256 \sqrt{\Delta n}$.  Let $M = \{ 1, \dotsc, D+1 \}$ and
  let $M^{(2)}$ consist of all subsets of size two of $M$.  We define
  a function $\phi: E(H) \rightarrow M^{(2)}$ by choosing $\phi(e) \in
  M^{(2)}$ uniformly at random for each $e \in E(H)$.  We will show
  that, with positive probability,
  \begin{enumerate}
  	\item \label{item:lll1} $\phi$ is injective, and
  	\item \label{item:lll2} for each vertex $v \in V(H)$, the sets
  	$\phi(vw)$ for $w \in N(v)$ are pairwise disjoint.
  \end{enumerate}
  We define a sequence of ``bad'' events to use \Cref{theorem:ALLL}.
  For distinct $e, f \in E(H)$, let $\mathcal{A}_{e,f}$ be the event
  that $\phi(e) = \phi(f)$.  For each pair of adjacent edges $e, f \in
  E(H)$, let $\mathcal{B}_{e,f}$ be the event that $\phi(e) \cap
  \phi(f) \neq \emptyset$.  Thus \ref{item:lll1}--\ref{item:lll2} hold
  if we avoid all $\mathcal{A}_{e,f}$ and $\mathcal{B}_{e,f}$.
  
  Note first that, for each $e, f$, we have 
  \begin{align*}
  	\probability[\mathcal{A}_{e,f}]
  	& = \binom{D+1}{2}^{-1} = \frac{2}{D(D+1)} \leq \frac{2}{D^2}, \\
  	\probability[\mathcal{B}_{e,f}]
  	& = (2D-1)\binom{D+1}{2}^{-1} = \frac{2(2D-1)}{D(D+1)} \leq \frac{4}{D}.
  \end{align*}
  
  Define $d_A := \Delta n$ and $d_B := 4 \Delta$.  Note that each
  event $\mathcal{A}_{e,f}$ or $\mathcal{B}_{e,f}$ is independent of
  all other events $\mathcal{A}_{e',f'}$ except if $\{e, f\} \cap
  \{e', f'\} \neq \emptyset$.  Given $\{e,f\}$, the number of such
  intersecting pairs $\{e',f'\}$ is at most $d_A$.  Similarly, each event
  $\mathcal{A}_{e,f}$ or $\mathcal{B}_{e,f}$ is independent of all
  but at most $d_B$ events of type $\mathcal{B}_{e',f'}$.
  
  For each event $\mathcal{A}_{e,f}$ define $x_{e,f} := x_A := d^{-1}_A$ and 
  for each event $\mathcal{B}_{e,f}$ define $y_{e,f} := x_B := d^{-1}_B$.
  We will show that the requirement \eqref{equation:lll} of the 
  Asymmetric Lovász Local Lemma is satisfied with these choices.
  
  Indeed, for an event of type $\mathcal{A}_{e,f}$, we use the fact
  that $1 - x \geq 2^{-2x}$ for $0 \leq x \leq 1/2$ to show that
  \begin{align*}
  	x_{A} \left(1 - x_A \right)^{d_A} \left(1 - x_B \right)^{d_B}
  	& \geq x_A 2^{-2x_A d_A} 2^{-2x_B d_B} = x_A 2^{-4} = \frac{1}{16 \Delta n} \geq \frac{2}{D^2}
  	\geq \probability[\mathcal{A}_{e,f}],
  \end{align*}
  and, for an event of type $\mathcal{B}_{e,f}$, we have
  \begin{align*}
  	x_{B} \left(1 - x_A \right)^{d_A} \left(1 - x_B \right)^{d_B}
  	& \geq x_B 2^{-2x_A d_A} 2^{-2x_B d_B} = x_B 2^{-4} = \frac{1}{64 \Delta} \geq \frac{4}{D}
  	\geq \probability[\mathcal{B}_{e,f}].
  \end{align*}
  Thus \Cref{theorem:ALLL} guarantees there is a function $\phi$
  satisfying \ref{item:lll1}--\ref{item:lll2}.  This function
  defines the matchings: for each $1 \leq i \leq D+1$ we let $M_i$
  consist of the edges $e \in E(H)$ such that $i \in \phi(e)$.
  Then $\phi(e) \in M^{(2)}$ ensures that each edge belongs to
  exactly two $M_i$'s, condition \ref{item:lll1} ensures that each
  pair of $M_i$, $M_j$ has at most one edge in common, and
  condition \ref{item:lll2} ensures that each $M_i$ is a matching.
  Since $D+1 \leq 300 \sqrt{\Delta n}$, we are done.
\end{proof}

Now we prove the main result of this section, which finds the required family of paths that separate $E(H)$.
The proof proceeds by using the matchings found in the previous lemma and covering those matchings with paths.

\begin{lemma} \label{lemma:lastfewpaths}
  Let $\eps, \delta, L > 0$ and let $G$ and $H$ be $n$-vertex graphs with $H \subseteq G$ 
  such that $\Delta(H) \leq \eps n$ and $G$ is $(\delta, L)$-robustly-connected. 
  Then there exist paths $\{P_i\}_{i=1}^r$, $\{Q_i\}_{i=1}^r$ in $G$, 
  with $r \leq 600L \delta^{-1} \sqrt{\eps} n$, 
  such that, for each $e \in E(H)$, there exist distinct $1 \leq i < j \leq r$ 
  such that $\{ e \} = E(P_i) \cap E(P_j) \cap E(Q_i) \cap E(Q_j)$.
\end{lemma}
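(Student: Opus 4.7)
The plan is to first apply \Cref{lemma:separatingmatchings} to $H$ with $\Delta = \eps n$, obtaining matchings $M_1, \dotsc, M_t$ with $t \leq 300\sqrt{\eps}\,n$ such that every edge of $H$ lies in exactly two of the $M_i$ and any two distinct matchings share at most one edge. The guiding idea is to realise each $M_i$ as a pair of paths $P_i, Q_i$ in $G$, both containing $M_i$, with $E(P_i) \cap E(Q_i) = M_i$; then for each $e \in E(H)$, choosing the two indices $i < j$ with $e \in M_i \cap M_j$ gives
\[
E(P_i) \cap E(P_j) \cap E(Q_i) \cap E(Q_j) = \big(E(P_i) \cap E(Q_i)\big) \cap \big(E(P_j) \cap E(Q_j)\big) = M_i \cap M_j = \{e\},
\]
as required.

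Since $|M_i|$ can be as large as $n/2$, we cannot realise an entire matching as a single path: the $(\delta, L)$-robustly-connected property of $G$ only supplies a new short connecting segment when the set of already-used vertices has size at most roughly $\delta n / L$. We therefore split each $M_i$ into sub-matchings $M_i^{(1)}, \dotsc, M_i^{(s_i)}$ of size at most $m := \delta n / (4L^2)$, and realise each sub-matching separately. The total number of pairs is
\[
r = \sum_{i=1}^{t} s_i \leq \sum_{i=1}^{t} \left( \frac{|M_i|}{m} + 1 \right) \leq \frac{2|E(H)|}{m} + t \leq \frac{\eps n^2}{m} + 300\sqrt{\eps}\,n,
\]
which fits within $600 L \delta^{-1} \sqrt{\eps}\,n$ under the implicit smallness of $\eps$ relative to $\delta/L^2$ that applies in the main proof.

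Fix a sub-matching $M_i^{(k)}$ with edges $u_1 v_1, \dotsc, u_{m'} v_{m'}$ where $m' \leq m$. Build $P_i^{(k)}$ greedily: at step~$a$, choose a path $R_a$ in $G$ from $v_a$ to $u_{a+1}$ with $1 \leq \ell \leq L$ internal vertices, all avoiding the vertices used so far. The $(\delta, L)$-robustly-connected property furnishes $\delta n^\ell$ candidate paths, and a counting argument analogous to \Cref{lemma:countingcycles} shows that at least one of them avoids any fixed forbidden set of size at most $\delta n / L$; the choice of $m$ keeps this forbidden set bounded by $2m + L(m-1) \leq \delta n / (2L)$ throughout. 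Construct $Q_i^{(k)}$ identically but when selecting its connecting paths $R'_a$ also forbid the internal vertices of the $R_a$, so the forbidden set grows to at most $2m + 2L(m-1) \leq \delta n / L$, still admissible. Since every connecting segment contains $\ell \geq 1$ internal vertices, each of its edges has at least one internal endpoint; therefore the internal-vertex-disjointness of the $R_a$'s and the $R'_a$'s makes the two families of connecting paths edge-disjoint, yielding $E(P_i^{(k)}) \cap E(Q_i^{(k)}) = M_i^{(k)}$.

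Relabelling the pairs $(P_i^{(k)}, Q_i^{(k)})$ as $(P_j, Q_j)$ for $1 \leq j \leq r$, the identity from the first paragraph applies because each $e \in E(H)$ lies in exactly two sub-matchings $M_{i_1}^{(k_1)}, M_{i_2}^{(k_2)}$, with $|M_{i_1} \cap M_{i_2}| \leq 1$ by \Cref{lemma:separatingmatchings}. The main obstacle will be verifying that the greedy construction never gets stuck; this reduces to keeping the forbidden vertex set below the $\delta n/L$ threshold during both the $P_i^{(k)}$ and $Q_i^{(k)}$ constructions, which is precisely why $m$ is chosen of order $\delta n / L^2$, and why the guarantee that every connecting path has at least one internal vertex is essential for passing from vertex-disjointness to edge-disjointness of the connecting segments.
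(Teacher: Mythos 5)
Your overall strategy coincides with the paper's: apply \Cref{lemma:separatingmatchings} to $H$, split each matching $M_i$ into small sub-matchings, use robust connectivity to realise each sub-matching $M'$ as $E(P)\cap E(Q)$ for a pair of paths, and finish via the fact that two distinct $M_i$'s share at most one edge. Your way of building the pair (one family of connectors for $P$, a second family for $Q$ chosen internally disjoint from the first) differs only cosmetically from the paper's construction, which alternates the $(x_k,x_{k+1})$- and $(y_k,y_{k+1})$-connectors between the two paths; your observation that every connector has at least one internal vertex, so internal-vertex disjointness forces edge-disjointness of the connector families, is exactly the point needed, and the final identity $E(P_i)\cap E(P_j)\cap E(Q_i)\cap E(Q_j)=\{e\}$ is argued correctly.

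The genuine gap is the bound on $r$. With piece size $m=\delta n/(4L^2)$ and the count $r\le 2|E(H)|/m+t$ you only obtain $r\le 4L^2\delta^{-1}\eps n+300\sqrt{\eps}\,n$, which is below $600L\delta^{-1}\sqrt{\eps}\,n$ only when $\sqrt{\eps}\le 75/L$ (roughly); the lemma allows arbitrary $\eps,\delta,L>0$, so invoking an ``implicit smallness of $\eps$'' proves a different statement. The repair is to count pieces per matching rather than through $|E(H)|$: each $M_i$ is a matching, so $|M_i|\le n/2$ and it splits into at most $\lceil |M_i|/m\rceil$ pieces, which is about $2L/\delta$ per matching for the paper's piece size $\delta n/(4L)$ (giving $r\le t\cdot 2L\delta^{-1}\le 600L\delta^{-1}\sqrt{\eps}\,n$, independent of any relation between $\eps$ and $L$), and about $2L^2/\delta$ for your piece size (giving $600L^2\delta^{-1}\sqrt{\eps}\,n$, a factor $L$ above the stated bound, though still adequate where the lemma is applied in \Cref{theorem:main} after adjusting constants). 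The reason you cannot simply adopt the larger piece size is your (more careful) availability count: a fixed forbidden vertex lies on at most $\ell n^{\ell-1}\le Ln^{\ell-1}$ of the $\delta n^\ell$ connecting paths, so you need the forbidden set below $\delta n/L$, whereas the paper works with pieces of size $<\delta n/(4L)$ and only keeps the forbidden set below $\delta n$, relying on its assertion that the $\delta n^\ell$ guaranteed paths are internally vertex-disjoint; so the factor-$L$ discrepancy reflects a real tension with the paper's accounting rather than a flaw in your construction itself.
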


\begin{proof}
  Apply \Cref{lemma:separatingmatchings} to $H$ (with $\eps n$ in place of $\Delta$), to obtain a collection
  of $t \leq 300 \sqrt{\eps} n$ matchings $M_1, \dotsc, M_t$ such that
  each edge in $H$ belongs to exactly two of these matchings; and
  each two distinct matchings have at most one edge in common.
  
  Separate the edges of each $M_i$ into $r_i \leq 2L \delta^{-1}$
  matchings $M_{i, 1}, \dotsc, M_{i, r_i}$ where each $M_{i, j}$ has
  less than $\delta n / (4 L)$ edges.  Let $r = \sum_i r_i$ be the total
  number of matchings obtained after doing this.  Since we have $t$
  matchings $M_i$ initially, after this process, we have obtained at
  most $r \leq t 2 L \delta^{-1} \leq 600 L \delta^{-1} \sqrt{\eps} n$
  matchings $M_{i, j}$.  We rename and enumerate the new matchings to
  be $M'_1, \dotsc, M'_r$ from now on.
  
  The next step is to obtain, for each $1 \leq i \leq r$, two paths
  $P_i$ and $Q_i$ of $G$ with the property that $E(P_i) \cap E(Q_i) = M'_i$. 
  For that, let $x_1 y_1, x_2 y_2, \dotsc, x_\ell y_\ell$ be the edges of~$M'_i$. 
  Since $G$ is $(\delta, L)$-robustly-connected, 
  there exist $1 \leq \ell \leq L$ and at least $\delta n^\ell$ many internally 
  vertex-disjoint $(x_1, x_2)$-paths with $\ell$ inner vertices each.  
  Because $|V(M'_i)| = 2 |M'_i| \leq \delta n / (2 L) < \delta n$, 
  there exists an $(x_1, x_2)$-path $P^{\smash{(1)}}_i$ of length at most $L$ 
  which is internally vertex-disjoint from $V(M'_i)$.  
  Similarly, we can find a $(y_1, y_2)$-path $Q^{\smash{(1)}}_i$ of length 
  at most $L$ which is internally disjoint from $V(M'_i) \cup V(P^{\smash{(1)}}_i)$.  
  We proceed in this fashion iteratively, finding for each $1 \leq k < \ell$, 
  in order, some $(x_k, x_{k+1})$-path $P^{\smash{(k)}}_i$ and a $(y_k, y_{k+1})$-path 
  $Q^{\smash{(k)}}_i$, both of length at most $L$, and both internally disjoint 
  from $V(M'_i)$ and from all previously chosen paths.  
  This can be achieved, because in each step the number of vertices we need 
  to avoid is at most $2 |M'_i| + 2 L |M'_i| \leq 4 L |M'_i| < \delta n$, 
  which implies that there is always one path available to choose.  
  We define $P_i$ as the path which starts with the edge $x_1y_1$, 
  then traverses the path $Q^{\smash{(1)}}_i$, then uses $y_2 x_2$, 
  then $P^{\smash{(2)}}_i$, etc., alternatingly using the paths $P^{\smash{(k)}}_i$ 
  and $Q^{\smash{(k)}}_i$, and covering all edges of $M'_i$.  
  We define $Q_i$ similarly, starting by the edge $y_1 x_1$, but then using the path $P^{\smash{(1)}}_i$, then $x_2 y_2$,
  then $Q^{\smash{(2)}}_i$, and so on.  
  Then $P_i, Q_i$ satisfy that $E(P_i) \cap E(Q_i) = M'_i$, as required.
  
  We define $\mathcal{P} = \{P_1, \dotsc, P_r\}$ and $\mathcal{Q} =
  \{Q_1, \dotsc, Q_r\}$.  By construction, each of them has the
  required number of paths.  Now we check that these families satisfy
  the required property.  Let $e \in E(H)$ be arbitrary.  By the
  choice of the matchings, there exist distinct $i_1, i_2$ such that
  $\{e\} = M_{i_1} \cap M_{i_2}$.  Suppose that $i, j$ are
  distinct such that $e \in M'_i \subseteq M_{i_1}$ and $e \in M'_j
  \subseteq M_{i_2}$.  It must happen that $\{e\} = M'_i \cap M'_j$.  
  Then, by the choice of $P_i, Q_i, P_j, Q_j$, we have that
  $M'_i = E(P_i) \cap E(Q_i)$ and $M'_j = E(P_j) \cap E(Q_j)$, and 
  therefore $E(P_i) \cap E(Q_i) \cap E(P_j) \cap E(Q_j) = M'_i \cap M'_j = \{e\}$, 
  as required.
\end{proof}

\section{Proof of the main result}\label{sec:mainresult}

Now we have the tools to prove our main result, from
which~\Cref{theorem:completegraph} and~\Cref{theorem:regular}
immediately follow (in combination with the lower bounds from
\Cref{proposition:lowerboundclique,proposition:lowerboundgeneral}).

\begin{theorem} \label{theorem:main}
	Let $\alpha, \rho, \varepsilon, \delta \in (0,1)$ and $L > 0$.
	Let $n$ be sufficiently large, and let $G$ be an $n$-vertex $(\alpha n \pm n^{1 - \rho})$-regular graph which is $(\delta,L)$-robustly-connected.
	Then $\ssp(G) \leq (\sqrt{3 \alpha + 1} - 1 + \varepsilon)n$.
\end{theorem}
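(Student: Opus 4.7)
The plan is to compose the three main constructions of Sections \ref{section:almostall}--\ref{sec:separating}. Set $\beta = \sqrt{3\alpha+1}-1$ and introduce auxiliary parameters $\eps_1, \eps_2 > 0$ chosen in the hierarchy $1/n \ll \eps_2 \ll \eps_1 \ll \min(\varepsilon, \delta/L)$, with $\eps_1$ small enough in terms of $\varepsilon, L, \delta$ that $O(\sqrt{\eps_1}) \leq \varepsilon$.

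First, I would apply \Cref{lemma:firstpacking} with $\eps_1$ and $\eps_2$ playing the roles of $\eps$ and $\eps'$ respectively. This produces a $(\eps_1^L \delta/2,\, L,\, \beta/(1-\eps_1),\, \eps_2)$-separator $\calQ$ for $G$ consisting of $t = \beta n/(1-\eps_1)$ two-matchings satisfying \ref{item:firstpacking-connecting}--\ref{item:firstpacking-almostperfect}.

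Next, I would invoke \Cref{lemma:connectingpaths} with input parameters $\eps_1$, $\eps_1^L \delta/2$ and $L$; that lemma supplies a threshold $\eps'$. I would retroactively choose $\eps_2 < \eps'$ in the first step, which is legitimate since \Cref{lemma:firstpacking} permits the compactness parameter to be made arbitrarily small. Applying \Cref{lemma:connectingpaths} to $\calQ$ then yields an $\eps_1$-almost separating path system $\calP_0$ of size $t = \beta n/(1-\eps_1)$. By definition there is a set $E' \subseteq E(G)$ such that $\calP_0$ separates every edge of $E'$ from all other edges of $G$, and the residual subgraph $H := G - E'$ satisfies $\Delta(H) \leq \eps_1 n$.

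In the final step, I would apply \Cref{lemma:lastfewpaths} to the pair $H \subseteq G$ with parameters $\eps_1, \delta, L$, obtaining paths $\{P_i, Q_i\}_{i=1}^r$ in $G$ with $r \leq 600 L \delta^{-1} \sqrt{\eps_1}\, n$ such that, for every $e \in E(H)$, some indices $i \neq j$ satisfy $E(P_i) \cap E(P_j) \cap E(Q_i) \cap E(Q_j) = \{e\}$. I would then verify that $\calP := \calP_0 \cup \{P_i, Q_i\}_{i=1}^r$ is strong-separating: for any distinct $e, f \in E(G)$, if $e \in E'$ then $\calP_0$ already supplies a path containing $e$ but not $f$; if instead $e \in E(H)$, the four paths $P_i, P_j, Q_i, Q_j$ isolating $e$ all contain $e$, and at least one of them misses $f$ (otherwise $f$ would belong to the four-way intersection $\{e\}$, forcing $f = e$). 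The symmetric argument with the roles of $e$ and $f$ swapped gives the reverse direction. The total size is
\[
|\calP| \leq \frac{\beta n}{1-\eps_1} + 2r \leq \beta n + O(\eps_1 + \sqrt{\eps_1})\, n \leq (\beta + \varepsilon)\, n,
\]
which is the desired bound.

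The only technical delicacy is threading the parameters through the three lemmas in the correct order: the connectivity parameter $\eps_1^L \delta/2$ inherited by $\calQ$ determines (via \Cref{lemma:connectingpaths}) the threshold $\eps'$ that $\eps_2$ must respect, and the dominant $\sqrt{\eps_1}\, n$ loss from \Cref{lemma:lastfewpaths} ultimately dictates how small $\eps_1$ needs to be. Once these are fixed in the order $\varepsilon \gg \eps_1 \gg \eps_2 \gg 1/n$, all hypotheses are met and the bound follows.
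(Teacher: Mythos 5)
Your proposal is correct and follows essentially the same route as the paper's own proof: apply \Cref{lemma:firstpacking}, feed the resulting separator into \Cref{lemma:connectingpaths}, handle the leftover low-degree subgraph with \Cref{lemma:lastfewpaths}, and take the union, verifying strong separation exactly as the paper does. The only difference is bookkeeping: the paper fixes explicit parameter values (e.g.\ the almost-separating parameter $(\eps\delta/(2400L))^2$ and $\eps_2 = 1-1/(1+\eps/2)$) and chooses $\eps'$ before invoking \Cref{lemma:firstpacking}, whereas you use a parameter hierarchy with a retroactive choice of $\eps_2 < \eps'$, which is equivalent.
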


\begin{proof}
  Let $\eps_2 := 1-1/(1 + \eps/2)$ and $\delta' = \eps_2^\ell \delta/2$.  
  Choose $\eps'$ and $n_0$ such that \Cref{lemma:connectingpaths} holds 
  with $(\eps \delta/(2400L))^2$, $L$ and $\delta'$ playing the roles of $\eps$, $L$ and $\delta$,
  respectively. From now on, we assume $n \geq n_0$ and let $\beta := \sqrt{3 \alpha + 1} - 1$.  
  Apply \Cref{lemma:firstpacking} to $G$ with $\eps_2$ and $\eps'$ playing the roles of $\eps$ and $\eps'$,
  respectively.  By doing this, we obtain a family $\mathcal{Q}$ of $2$-matchings which is 
  a $(\delta', L, (1+\eps/2)\beta, \eps')$-separator.  
  Thus $\mathcal{Q}$ consists of ${t := (1 + \eps/2)\beta n \leq (\sqrt{3 \alpha + 1} - 1 + \eps/2)n}$ 
  many $2$-matchings $Q_1, \dotsc, Q_t$, satisfying 
  \ref{item:firstpacking-connecting}--\ref{item:firstpacking-almostperfect}
  (with $\delta'$, $(1+\eps/2)\beta$ and $\eps'$ in place of $\delta$, $\beta$ and $\eps$).
	Next, we apply \Cref{lemma:connectingpaths} to $G$ and $\mathcal{Q}$.
	By the choice of $\eps'$ and $n_0$, we obtain an $(\eps \delta /(2400L))^2$-almost separating path system $\mathcal{P}$ in $G$ of size~$t$.
	
	Let $E' \subseteq E(G)$ be the subset of edges which are
        strongly separated by $\mathcal{P}$ from every other edge.
        Since $\mathcal{P}$ is $(\eps \delta/(2400L))^2$-almost
        separating, the subgraph $J := G - E'$ satisfies $\Delta(J)
        \leq (\eps \delta/(2400L))^2 n$.  By assumption, $G$ is
        $(\delta, L)$-robustly-connected, which allows us to apply
        \Cref{lemma:lastfewpaths} with $J$ and $(\eps
        \delta/(2400L))^2$ playing the roles of $H$ and $\eps$,
        respectively.  By doing so, we obtain two families
        $\mathcal{R}_1, \mathcal{R}_2$ of at most $\eps n / 4$ paths
        each, such that, for each $e \in E(J)$, there exist two paths
        $P_i, P_j \in \mathcal{R}_1$ and $ Q_i, Q_j \in \mathcal{R}_2$
        such that $\{e\} = E(P_i) \cap E(P_j) \cap E(Q_i) \cap
        E(Q_j)$.
	
	We let $\mathcal{P}' := \mathcal{P} \cup \mathcal{R}_1 \cup
        \mathcal{R}_2$.  Note that $\mathcal{P}'$ has at most $t +
        \eps n/2 \leq (\sqrt{3 \alpha + 1} - 1 + \eps)n$ many paths.  We
        claim that $\mathcal{P}'$ is a strong-separating path system
        for $G$.  Indeed, let $e, f$ be distinct edges in $E(G)$; we
        need to show that there exists a path in $\mathcal{P}'$ which
        contains $e$ and not $f$.  If $e \in E'$, then such a path is
        contained in $\mathcal{P}$, so we can assume that $e \in
        E(J)$.  There exist four paths $P_i, P_j, Q_i, Q_j \in
        \mathcal{P}'$ such that $\{e\} = E(P_i) \cap E(P_j) \cap
        E(Q_i) \cap E(Q_j)$, which in particular implies that one of
        these paths does not contain $f$.
\end{proof}

\section{Corollaries} \label{sec:corollaries}

Now we apply \Cref{theorem:main} to bound $\ssp(G)$ for graphs $G$
belonging to certain families of graphs.  In all cases, we just need
to check that the corresponding graphs are $(\delta,
L)$-robustly-connected for suitable parameters.

\begin{corollary}
	For each $\eps > 0$ and sufficiently large $n$, $\ssp(K_{n/2, n/2}) \leq (\sqrt{5/2} - 1 + \eps)n$.
\end{corollary}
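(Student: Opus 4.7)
The plan is to verify that $K_{n/2,n/2}$ satisfies the hypotheses of \Cref{theorem:main} with $\alpha = 1/2$ and then directly apply it. Note that $\sqrt{3 \cdot (1/2) + 1} - 1 = \sqrt{5/2} - 1$, which matches the target bound, and that $K_{n/2, n/2}$ is exactly $(n/2)$-regular, so it is trivially $(\alpha n \pm n^{1-\rho})$-regular with $\alpha = 1/2$ for any $\rho \in (0,1)$.

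The only substantive step is to check that $K_{n/2,n/2}$ is $(\delta, L)$-robustly-connected for some fixed constants $\delta > 0$ and $L \geq 1$. Because $K_{n/2,n/2}$ is bipartite with parts $A, B$ of size $n/2$ each, the parity of any $x$--$y$ path is determined by whether $x$ and $y$ lie on the same side. Writing $\ell$ for the number of internal vertices, a path from $x$ to $y$ has $\ell + 1$ edges, so $\ell$ must be even if $x$ and $y$ are on opposite sides, and odd if they are on the same side. Thus I will handle the two cases separately.

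If $x$ and $y$ lie on the same side, take $\ell = 1$: any vertex $z$ on the other side yields a path $x z y$, giving exactly $n/2$ such paths, which is at least $\delta n$ for $\delta \leq 1/3$. If $x$ and $y$ lie on opposite sides, take $\ell = 2$: a path $x z_1 z_2 y$ is obtained by choosing any $z_1$ adjacent to $x$ (on the opposite side from $x$, so $z_1 \in B$ if $x \in A$, with $z_1 \neq y$) and any $z_2$ on the opposite side from $z_1$ (with $z_2 \neq x$), yielding $(n/2 - 1)^2 \geq \delta n^2$ choices for $\delta \leq 1/5$ and $n$ large. Taking $\delta = 1/5$ and $L = 2$, we conclude that $K_{n/2,n/2}$ is $(1/5, 2)$-robustly-connected.

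With both hypotheses verified, applying \Cref{theorem:main} with $\alpha = 1/2$, $\delta = 1/5$, $L = 2$, and any $\rho \in (0,1)$ yields $\ssp(K_{n/2,n/2}) \leq (\sqrt{5/2} - 1 + \eps)n$ for all sufficiently large $n$. There is no real obstacle in this argument; the only point to be careful about is the bipartite parity issue, which forces the use of $L \geq 2$ rather than $L = 1$.
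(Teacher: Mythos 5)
Your proposal is correct and follows essentially the same route as the paper: verify that $K_{n/2,n/2}$ is $(\alpha n \pm n^{1-\rho})$-regular with $\alpha = 1/2$, show it is $(1/5,2)$-robustly-connected (same-side pairs via the $n/2$ common neighbours, opposite-side pairs via the $(n/2-1)^2 \geq n^2/5$ paths with two inner vertices), and apply \Cref{theorem:main}. The extra remark about bipartite parity forcing $L \geq 2$ is a nice touch but does not change the argument, which matches the paper's proof.
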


\begin{proof}
  Let $\eps > 0$ be arbitrary and $n$ sufficiently large in terms
  of~$\eps$.  The graph $K_{n/2, n/2}$ is $n/2$-regular, so it is
  $\alpha n$-regular with $\alpha = 1/2$.  Pairs of vertices $x,y$ in
  the same part of the bipartition have $n/2$ neighbours in common;
  and pairs of vertices $x,y$ in different parts of the bipartition
  have $((n/2)-1)^2 \geq n^2/5$ many $(x,y)$-paths with two inner
  vertices each.  Hence, $K_{n/2, n/2}$ is $(1/5,
  2)$-robustly-connected.  By applying \Cref{theorem:main} with
  $\alpha = 1/2$, $\eps$, $\rho$ and $\delta = 1/5$ we obtain that
  $\ssp(K_n) \leq (\sqrt{5/2} - 1 + \eps)n$.
\end{proof}

%
%
%
%
%

Let us now describe a well-known family of graphs which satisfies the
connectivity assumptions of \Cref{theorem:main}.  Given $0 < \nu \leq
\tau \leq 1$, a graph $G$ on $n$ vertices, and a set $S \subseteq
V(G)$, the \emph{$\nu$-robust neighbourhood of $S$} is the set
$\operatorname{RN}_{\nu, G}(S) \subseteq V(G)$ of all vertices with at
least $\nu n$ neighbours in $S$.  We say that $G$ is a \emph{robust
$(\nu, \tau)$-expander} if, for every $S \subseteq V(G)$ with $\tau n
\leq |S| \leq (1 - \tau)n$, we have $|{\operatorname{RN}}_{\nu, G}(S)|
\geq |S| + \nu n$.  Many families of graphs are robust $(\nu,
\tau)$-expanders for suitable values of $\nu, \tau$, including large
graphs with $\delta(G) \geq dn$ for fixed $d > 1/2$, dense random graphs,
dense regular quasirandom graphs~\cite[Lemma 5.8]{KuhnOsthus2014},
etc.

\begin{corollary}\label{coro:robustexpander-robustconnected}
	For each $\eps, \alpha, \tau, \nu, \rho > 0$ with $\alpha \geq
        \tau + \nu$, there exists $n_0$ such that the following holds
        for each $n \geq n_0$.  Let $G$ be an $n$-vertex ${(\alpha n
          \pm n^{1 - \rho})}$-regular robust $(\nu, \tau)$-expander.
        Then $\ssp(G) \leq (\sqrt{3 \alpha + 1} - 1 + \eps)n$.
\end{corollary}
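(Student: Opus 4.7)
The plan is to apply \Cref{theorem:main}, so it suffices to show that $G$ is $(\delta, L)$-robustly-connected for some $\delta > 0$ and positive integer $L$ depending only on $\nu$ and $\tau$. Once this is in place, the conclusion $\ssp(G) \leq (\sqrt{3\alpha+1} - 1 + \eps)n$ follows immediately from the theorem, using the already-assumed regularity condition.

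To verify robust-connectedness, I would use a standard ``robust BFS'' iteration. Fix $x, y \in V(G)$ and set $B_1 := N_G(x)$; then, for $k \geq 1$, set $B_{k+1} := \operatorname{RN}_{\nu, G}(B_k)$ as long as $\tau n \leq |B_k| \leq (1-\tau) n$. Since $\alpha \geq \tau + \nu$, the degree bound gives $|B_1| \geq (\tau + \nu/2)n$ for large $n$, so the robust expansion property applies from the very first step and forces $|B_{k+1}| \geq |B_k| + \nu n$. Hence after at most $L_0 := \lceil 1/\nu \rceil + 1$ iterations I obtain $|B_{L_0}| \geq (1-\tau)n$. Alongside the set sizes, I would track walk counts by induction on $k$: every $z \in B_k$ is reached from $x$ by at least $\nu^{k-1} n^{k-1}$ walks of length $k$, since any $z \in B_{k+1}$ has at least $\nu n$ neighbors in $B_k$, each of which contributes the inductive lower bound, giving $\nu n \cdot \nu^{k-1} n^{k-1} = \nu^k n^k$ walks.

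Performing the symmetric construction from $y$ yields an analogous set $B'_{L_0}$ whose vertices are each reached from $y$ by at least $\nu^{L_0-1} n^{L_0-1}$ walks of length $L_0$. For $\tau < 1/2$, the two sets intersect in at least $(1 - 2\tau)n = \Omega(n)$ vertices; concatenating the two families of walks at each common midpoint produces at least $\Omega\bigl(\nu^{2L_0-2} n^{2L_0-1}\bigr)$ walks from $x$ to $y$ of length $2L_0$. These walks have $\ell := 2L_0 - 1$ inner vertices, and since the number of such walks with a repeated vertex is only $O(n^{\ell-1})$, the number of genuine $(x,y)$-paths with exactly $\ell$ inner vertices is at least $\delta n^\ell$ for some constant $\delta = \delta(\nu, \tau) > 0$. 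This establishes $(\delta, L)$-robust-connectedness with $L := 2L_0 - 1$. The complementary case $\tau \geq 1/2$ forces $\alpha > 1/2$, whence $|N(x) \cap N(y)| \geq (2\alpha - 1)n - o(n) = \Omega(n)$ directly supplies the required bound with $\ell = 1$.

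The main technical obstacle I anticipate is the careful bookkeeping in the iterated expansion: ensuring that the factor $\nu^{k-1}$ genuinely propagates through the induction (noting that $B_{k+1}$ need not contain $B_k$, so the argument must use only edges between consecutive layers), and that the set sizes remain in the regime where the robust expansion hypothesis is applicable. Once these details are handled, the deduction of the corollary from \Cref{theorem:main} is routine: invoke the theorem with parameters $\alpha$, $\rho$, $\eps$, $\delta$, and $L$ as above.
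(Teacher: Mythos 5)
Your reduction is the same as the paper's: everything hinges on showing that $G$ is $(\delta,L)$-robustly-connected for constants $\delta,L$ depending only on $\nu$ (and, in your version, $\tau$), after which \Cref{theorem:main} gives the bound. The verification itself, however, is organised differently. You run a bilateral ``robust BFS'': iterate $B_{k+1}=\operatorname{RN}_{\nu,G}(B_k)$ from $x$ and from $y$, track lower bounds on the number of \emph{walks} to each layer, meet the two families in a common set of $\Omega(n)$ midpoints (for $\tau<1/2$), and convert walks to paths at the end by discarding the $O(n^{\ell-1})$ degenerate sequences, with the $\tau\ge 1/2$ regime handled separately via $|N(x)\cap N(y)|\ge(2\alpha-1)n-o(n)$. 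The paper instead runs a unilateral iteration $R_{i+1}=R_i\cup(\operatorname{RN}_{\nu,G}(N(x)\cup R_i)\setminus N(x))$, shows $R_L=V(G)$ with $L=\lceil\nu^{-1}\rceil$, and proves by induction (with a pigeonhole over the sets $T_r$ of vertices already reached by many $r$-inner-vertex paths) that every vertex is reached by many genuine \emph{paths} of some length at most $L$; because the target $y$ has $\ge\nu n$ neighbours in $N(x)\cup R_i$ even when that set is larger than $(1-\tau)n$, no case split on $\tau$ is needed there. Your version is arguably lighter on bookkeeping per step but yields a larger $L\approx 2\lceil\nu^{-1}\rceil+1$ and a worse (still constant) $\delta$; both are perfectly acceptable since \Cref{theorem:main} only needs constants. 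Two small points to nail down in a write-up: (i) if a BFS saturates (exceeds size $(1-\tau)n$) before step $L_0$, you should let the two halves have different lengths and take $\ell=k+k'-1\le 2L_0-1$, which is fine because the definition of robust connectedness allows $\ell$ to depend on the pair $(x,y)$; and (ii) when discarding degenerate concatenations, also exclude sequences in which an inner vertex equals $x$ or $y$ — this is still $O(n^{\ell-1})$, so the conclusion stands.
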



\begin{proof}
	By \Cref{theorem:main}, it is enough to prove that $G$ is
        $(\delta, L)$-robustly-connected with $\delta, L$ depending on
        $\nu$ only.  We will prove this holds with $L := \lceil
        \nu^{-1} \rceil$ and $\delta := (\nu / 4)^L 4^{-L^2}$.
	
	Let $x$ be any vertex, and let $N(x)$ be its neighbourhood.
        We define $R_0 = \emptyset$ and for each $i \geq 0$ we let
        $R_{i+1} = R_i \cup ( \operatorname{RN}_{\nu, G}(N(x) \cup
        R_i) \setminus N(x))$ if $|N(x) \cup R_i| \leq (1 - \tau)n$;
        or $R_{i+1} = V(G)$ otherwise.  By definition, $R_0 \subseteq
        R_1 \subseteq R_2 \subseteq \dotsb$.
	
	Since $G$ is a robust $(\nu, \tau)$-expander and $|N(x)| \geq
        \alpha n \geq \tau n$, it can be quickly checked that, for
        each $i \geq 0$ such that $|N(x) \cup R_i| \leq (1 - \tau)n$,
        the bound ${|R_{i+1} \setminus R_i| \geq \nu n}$ holds.  In
        particular, this implies that $R_L = V(G)$.  Indeed, suppose
        otherwise.  Then $R_L \neq V(G)$, therefore $|N(x) \cup R_{i}|
        \leq (1 - \tau)n$ for all $0 \leq i < L$, which implies that
        $|R_i \setminus R_{i-1}| \geq \nu n$ holds for each $1 \leq i
        \leq L$.  But then, since $L \geq \nu^{-1}$, we have
	\[ n > |R_L| = |R_L \setminus R_{L-1}| + \dotsb + |R_2 \setminus R_1| + |R_1| \geq L \nu n \geq n, \]
	a contradiction.
	
	Given $j \geq 1$, we let $T_j \subseteq V(G)$ be the set of
        vertices $v$ for which there are at least $(\nu n/4)^j
        4^{-j^2}$ many $(x,v)$-paths in $G$ with $j$ inner vertices
        each.  We claim that, for each $0 \leq i \leq L$, it holds
        that $R_i \subseteq T_1 \cup \dotsb \cup T_i$.  Before proving
        the claim we note that this is enough to conclude: as
        discussed before we have that $V(G) = R_L \subseteq T_1 \cup
        \dotsb \cup T_L$, so for each vertex $y \in V(G)$ there would
        exist ${1 \leq \ell \leq L}$ such that $y \in T_\ell$.  This
        implies that there exist at least $(\nu n/4)^\ell 4^{-\ell^2}
        \geq \delta n^\ell$ many $(x,y)$-paths with $\ell$ inner
        vertices each, as required.
	
	Now we prove the claim by induction on $i$, where the base
        case $i=0$ holds vacuously.  Assuming the claim for some $i <
        L$, we prove it for $i+1$.  Let $y \in R_{i+1}$ be arbitrary,
        it is enough to check that $y \in T_1 \cup \dotsb \cup
        T_{i+1}$.  By the inductive hypothesis, we can assume that $y
        \in R_{i+1} \setminus R_i$.  Note that $y$ has at least $\nu
        n$ neighbours in $N(x) \cup R_i$.  Indeed, if $|N(x) \cup R_i|
        \leq (1 - \tau)n$ then $y \in R_{i+1} \setminus R_i \subseteq
        \operatorname{RN}_{\nu, G}(N(x) \cup R_i)$ so indeed $y$ must
        have at least $\nu n$ neighbours in $N(x) \cup R_i$.
        Otherwise, if $|N(x) \cup R_i| > (1 - \tau)n$ then, since $y$
        has at least $\alpha n \geq (\tau + \nu) n$ neighbours, at
        least $\nu n$ of them must be in $N(x) \cup R_i$.
	
	We are done if $y$ has at least $\nu n/2$ neighbours in
        $N(x)$, because that immediately implies that $y \in T_1$.  We
        assume from now on that $|N(y) \cap N(x)| < \nu n/2$ and
        therefore $|N(y) \cap R_i| \geq \nu n /2$.  By the induction
        hypothesis, $R_i \subseteq T_1 \cup \dotsb \cup T_i$.  Observe
        that there must exist $1 \leq r \leq i$ such that $|N(x) \cap
        T_r| \geq \nu n / 2^{r+1}$, as otherwise we would have $|N(x)
        \cap R_i| < (\nu n/2) \sum_{r\geq 1} 2^{-r} \leq \nu n / 2$, a
        contradiction.  Fix such an $r$ from now on, and we will
        conclude by showing that $y \in T_{r+1}$.
	
	Indeed, for each $z \in N(y) \cap T_r$ there is a family
        $\mathcal{P}_z$ of at least $(\nu n/4)^r 4^{-r^2}$ many $(x,
        z)$-paths with $r$ inner vertices each.  We wish to extend the
        paths in $\mathcal{P}_z$ by including $y$ to obtain
        $(x,y)$-paths with $r+1$ inner vertices each.  This can only
        fail for some $P \in \mathcal{P}_z$ if $y \in V(P)$, but that
        can happen only for at most $rn^{r-1}$ paths.
        Since $|\mathcal{P}_z| \geq (\nu n/4)^r 4^{-r^2}$, using that $r \leq L$ and $1/n \ll \nu$ we can deduce that $|\mathcal{P}_z|/2 \geq rn^{r-1}$.
        This allows us to conclude that there are at least $|\mathcal{P}_z| / 2$ many
        $(x,y)$-paths with $r+1$ inner vertices which end with $zy$.
        By counting the paths for each choice of $z \in N(y) \cap
        T_r$, the number of desired $(x,y)$-paths is at least
	\begin{align*}
		\sum_{z \in N(y) \cap T_r} \frac{|\mathcal{P}_z|}{2} 
		& \geq \frac{|N(y) \cap T_r|}{2} \left( \frac{\nu n}{4} \right)^r 4^{-r^2} 
		 \geq \frac{\nu n}{4 \cdot 2^{r}} \left( \frac{\nu n}{4} \right)^r 4^{-r^2} 
		\geq \left( \frac{\nu n}{4} \right)^{r+1} 4^{-(r+1)^2},
	\end{align*}
	so $y \in T_{r+1}$, as claimed.
	This finishes the proof.
\end{proof}

%
%

\section{Conclusion} \label{section:conclusion}

To determine the maximum of $\wsp(G)$ and $\ssp(G)$ over all
$n$-vertex graphs $G$ remains an interesting problem. Falgas-Ravry,
Kittipassorn, Korándi, Letzter, and Narayanan\ (see \cite[Conjecture
1.2]{FKKLN2014} and the remarks afterwards) said that `it is not
inconceivable' that $\wsp(G) \leq (1 + o(1))n$ holds for all
$n$-vertex graphs $G$.  We have shown that $\ssp(G) \leq (1 + o(1))n$
holds for dense, regular, sufficiently connected $n$-vertex graphs.
Even the following could be
true\footnote{In~\cite[Theorem~10]{BCMP2016} it is stated that for
  each $\eps \in (0, 1/2)$ there exists some $n$ and an $n$-vertex
  graph $G$ such that $\ssp(G) \geq 2(1 - 2 \eps)n$, but unfortunately
  the proof has a flaw. The error in the proof appears
  in~\cite[Remark~10]{BCMP2016}, because the length of the longest
  path in $K_{\eps n, (1 - \eps)n}$ is $2 \eps n$ and not $\eps n +
  1$.}:

\begin{question}
	Does $\ssp(G) \leq (1 + o(1))n$ hold for \emph{all} $n$-vertex graphs $G$?
\end{question}

It would also be interesting to estimate $\wsp(G)$ and $\ssp(G)$ for
graphs not covered by our main result.  Complete bipartite graphs
$K_{a,b}$ with $a < b$ are an interesting open case.  It is also of
interest to weaken the conditions in our main result. For instance,
can the connectivity conditions in \Cref{theorem:main} be weakened?
Does $\Omega(n)$-vertex-connectivity suffice?

\subsection*{Acknowledgments}
This work began at the ChiPaGra workshop 2023 in Valparaíso, Chile,
funded by FAPESP-ANID Investigación Conjunta grant 2019/13364-7.
We thank the organizers and the attendees for a great atmosphere.
The third author thanks Matías Pavez-Signé for useful suggestions.

\sloppy\printbibliography

\end{document}